\def\balign#1\ealign{\begin{align}#1\end{align}}
\def\baligns#1\ealigns{\begin{align*}#1\end{align*}}
\def\balignat#1\ealign{\begin{alignat}#1\end{alignat}}
\def\balignats#1\ealigns{\begin{alignat*}#1\end{alignat*}}
\def\bitemize#1\eitemize{\begin{itemize}#1\end{itemize}}
\def\benumerate#1\eenumerate{\begin{enumerate}#1\end{enumerate}}
\newenvironment{talign*}
 {\csname align*\endcsname}
 {\endalign}
\newenvironment{talign}
 {\csname align\endcsname}
 {\endalign}
\def\balignst#1\ealignst{\begin{talign*}#1\end{talign*}}
\def\balignt#1\ealignt{\begin{talign}#1\end{talign}}
\let\originalleft\left
\let\originalright\right
\renewcommand{\left}{\mathopen{}\mathclose\bgroup\originalleft}
\renewcommand{\right}{\aftergroup\egroup\originalright}
\def\tinycitep*#1{{\tiny\citep*{#1}}}
\def\tinycitealt*#1{{\tiny\citealt*{#1}}}
\def\tinycite*#1{{\tiny\cite*{#1}}}
\def\smallcitep*#1{{\scriptsize\citep*{#1}}}
\def\smallcitealt*#1{{\scriptsize\citealt*{#1}}}
\def\smallcite*#1{{\scriptsize\cite*{#1}}}
\def\<{\left\langle} 
\def\>{\right\rangle}
\def\defeq{\coloneqq} 
\DeclareSymbolFont{rsfs}{U}{rsfs}{m}{n}
\DeclareSymbolFontAlphabet{\mathscrsfs}{rsfs}
\providecommand{\tr}{\mathop\mathrm{tr}}
\newtheorem{theorem}{Theorem}
\newtheorem{lemma}[theorem]{Lemma}
\newtheorem{corollary}[theorem]{Corollary}
\newtheorem{definition}[theorem]{Definition}
\renewenvironment{proof}{\noindent\textbf{Proof.}\hspace*{.3em}}{\qed \vspace{.1in}}
\newenvironment{proof-sketch}{\noindent\textbf{Proof Sketch}
  \hspace*{1em}}{\qed\bigskip\\}
\newenvironment{proof-idea}{\noindent\textbf{Proof Idea}
  \hspace*{1em}}{\qed\bigskip\\}
\newenvironment{proof-of-lemma}[1][{}]{\noindent\textbf{Proof of Lemma {#1}}
  \hspace*{1em}}{\qed\\}
  \newenvironment{proof-of-proposition}[1][{}]{\noindent\textbf{Proof of Proposition {#1}}
  \hspace*{1em}}{\qed\\}
\newenvironment{proof-of-theorem}[1][{}]{\noindent\textbf{Proof of Theorem {#1}}
  \hspace*{1em}}{\qed\\}
\newenvironment{proof-attempt}{\noindent\textbf{Proof Attempt}
  \hspace*{1em}}{\qed\bigskip\\}
\newenvironment{proofof}[1]{\noindent\textbf{Proof of {#1}}
  \hspace*{1em}}{\qed\bigskip\\}
\newtheorem*{remark*}{Remark}
\newenvironment{remark}{\noindent\textbf{Remark.}
  \hspace*{0em}}{\smallskip}
\newtheorem{assumption}{Assumption}
\theoremstyle{definition}
\newtheorem{example}[theorem]{Example}
\definecolor{OliveGreen}{rgb}{0,0.6,0}
\definecolor{OliveGreen}{rgb}{0,0.6,0}
\renewcommand{\paragraph}{%
  \@startsection{paragraph}{4}%
  {\z@}{1.25ex \@plus 1ex \@minus .2ex}{-1em}%
  {\normalfont\normalsize\bfseries}%
}
\title{Sampling from the Mean-Field Stationary Distribution}
\begin{document}

 \author{\!\!\!\!\!
Yunbum Kook\thanks{
  School of Computer Science at
  Georgia Institute of Technology, \texttt{yb.kook@gatech.edu}
} \ \ \
Matthew S.\ Zhang\thanks{
  Department of Computer Science at
  University of Toronto, and Vector Institute, \texttt{matthew.zhang@mail.utoronto.ca}
} \\
 Sinho Chewi\thanks{
  School of Mathematics at
  Institute for Advanced Study, \texttt{schewi@ias.edu}
 }
 \ \ \
 Murat A.\ Erdogdu\thanks{
  Department of Computer Science at
  University of Toronto, and Vector Institute, \texttt{erdogdu@cs.toronto.edu}
 }
 \ \ \
 Mufan (Bill) Li\thanks{
  Department of Operations Research and Financial Engineering at Princeton University \texttt{mufan.li@princeton.edu}
}
}

\maketitle

\begin{abstract}
    We study the complexity of sampling from the stationary distribution of a mean-field SDE\@, or equivalently, the complexity of minimizing a functional over the space of probability measures which includes an interaction term.
    Our main insight is to \emph{decouple} the two key aspects of this problem: (1) approximation of the mean-field SDE via a finite-particle system, via uniform-in-time propagation of chaos, and (2) sampling from the finite-particle stationary distribution, via standard log-concave samplers.
    Our approach is conceptually simpler and its flexibility allows for incorporating the state-of-the-art for both algorithms and theory. 
    This leads to improved guarantees in numerous settings, including 
    better guarantees for optimizing certain two-layer neural networks in the mean-field regime.
    A key technical contribution is to establish a new uniform-in-$N$ log-Sobolev inequality for the stationary distribution of the mean-field Langevin dynamics.
\end{abstract}

\section{Introduction}\label{scn:intro}
The minimization of energy functionals $\mc E$ over the Wasserstein space $\Pac$ of probability measures has attracted substantial research activity in recent years, encompassing numerous application domains, including distributionally robust optimization~\cite{Kuhn+19DRO, YueKuhWie22OptWass}, sampling~\cite{jordan1998variational, Wib18SamplingOpt, ChewiBook}, and variational inference~\cite{LiuWan16SVGD, Lam+22GVI, Diao+23FBGVI, JiaChePoo23MFVI, Lac23MFVI, YaoYan23MFVI}.

 A canonical example of such a functional is $\mc E(\mu) = \int V\,\D\mu + \int \log \mu \, \D \mu$, where $V : \R^d\to\R$ is called the potential.
 Up to an additive constant, which is irrelevant for the optimization, this energy functional equals the KL divergence $\KL(\mu\mmid \pi)$ with respect to the density $\pi\propto\exp(-V)$, and the celebrated result of~\cite{jordan1998variational} identifies the Wasserstein gradient flow of $\mc E$ with the Langevin diffusion.
 This link has inspired a well-developed theory for log-concave sampling, with applications to Bayesian inference and randomized algorithms; see~\cite{ChewiBook} for an exposition.

The energy functional above contains two terms, corresponding to two of the fundamental examples of functionals considered in Villani's well-known treatise on optimal transport~\cite{villani2003topics}.
Namely, they are the ``potential energy'' and the entropy, the latter being a special case of the ``internal energy.''
However, Villani identifies a third fundamental functional---the ``interaction energy''---with the \emph{pairwise} form given by
\begin{align}\label{eq:total_energy_with_interaction}\tag{$\msf{pE}$}
    \mc{E}(\mu) \deq \int V(x) \, \mu(\D x) + \iint W(x - y) \, \mu(\D x)\, \mu(\D y) + \frac{\sigma^2}{2} \int \log \mu(x) \, \mu(\D x)\,.
\end{align}
More generally, in this work we consider minimizing the \emph{generic} entropy-regularized energy
\begin{align}\label{eq:total_energy}\tag{$\msf{gE}$}
    \mc E(\mu)  \deq  \mc F(\mu) + \frac{\sigma^2}{2} \int \log \mu \, \D \mu \,
\end{align}
where $\mc F : \Pac \to \R$ is a known functional. The minimization of the energy~\eqref{eq:total_energy} has recently been of interest due to its role in analysing neural network training dynamics in the mean-field regime, including with~\cite{suzuki2022uniform} and without~\cite{chizat2018global, mei2018mean} entropic regularization, as well as with Fisher regularization~\cite{Cla+23MeanFieldFI}.

For the sake of exposition, let us first focus on minimizing the pairwise energy~\eqref{eq:total_energy_with_interaction}.
A priori, this question is more difficult than log-concave sampling; for instance, $\pi$ does not admit a closed form but rather is the solution to a non-linear equation 
\begin{align}\label{eq:mean_field_stat_interactive}
    \pi(x) \propto \exp \Bigl(-\frac{2}{\sigma^2}\, V(x) - \frac{2}{\sigma^2} \int W(x-\cdot) \, \D\pi\Bigr)\,.
\end{align}
However, here too there is a well-developed mathematical theory which suggests a principled algorithmic approach.
Just as the Wasserstein gradient flow of~\eqref{eq:total_energy_with_interaction} in the case when $W = 0$ can be identified with the Langevin diffusion, the Wasserstein gradient flow of~\eqref{eq:total_energy_with_interaction} in the case when $W \ne 0$ corresponds to a (pairwise) McKean{--}Vlasov SDE, i.e. an SDE whose coefficients depend on the marginal law of the process, given below as
\begin{align}\label{eq:mean_field_interactive} \tag{$\msf{pMV}$}
    \D X_t = -\Bigl(\nabla V(X_t) + \int \nabla W(X_t - \cdot) \, \D \pi_t \Bigr)\, \D t + \sigma \, \D B_t\,,
\end{align}
where $\pi_t = \law(X_t)$,  $W$ is even, and $\{B_t\}_{t\ge 0}$ is a standard Brownian motion on $\R^d$.
Since the McKean{--}Vlasov SDE is the so-called \emph{mean-field limit} of interacting particle systems, we can approximately sample from the minimizer $\pi$ by numerically discretizing a system of SDEs, which describe the evolution of $N$ particles$\{X_t^{1:N}\}_{t\ge 0}  \deq  \{(X^1_t,\dotsc, X^N_t)\}_{t \geq 0}$ as:
\begin{align}\label{eq:finite_particle_interactive} \tag{$\msf{pMV}_N$}
    \D X_{t}^{i} = -\Bigl(\nabla V(X_{t}^{i}) + \frac{1}{N-1}\sum_{j \in [N]\backslash i} \nabla W(X_{t}^{i} -  X_{t}^{j}) \Bigr)\, \D t + \sigma \, \D B_{t}^i\,, \quad \forall\, i \in [N]\,,
\end{align}
where $\{B^i : i\in [N]\}$ is a collection of independent Brownian motions.
Moreover, the error from approximating the mean-field limit via this finite particle system has been studied in the literature on \emph{propagation of chaos} \cite{sznitman1991topics}.
Similarly, the Wasserstein gradient flow for~\eqref{eq:total_energy} corresponds to the \emph{mean-field Langevin dynamics} and admits an analogous particle approximation.

The bounds for propagation of chaos have been refined over time, with \cite{lacker2023sharp} recently establishing a tight error dependence $\mathcal{O}(\nicefrac 1 N)$ on the total number of particles $N$. 
These bounds, however, do not translate immediately into algorithmic guarantees. 
Existing sampling analyses study the propagation of chaos and discretization as a single \textbf{entangled} problem, which thus far have only been able to use weaker ${\mc O}(\sqrt{\nicefrac 1 N})$ rates for the former. 
Furthermore, there has been recent interest in using more sophisticated particle-based algorithms, e.g., ``non-linear'' Hamiltonian Monte Carlo~\cite{bou2023nonlinear} and the mean-field underdamped Langevin dynamics~\cite{fu2023mean} to reduce the discretization error. 
Currently, this requires repeatedly carrying out the propagation of chaos and time discretization analyses from the ground up for each instance. 

This motivates us to pose the following questions: \textbf{(1)} Can we incorporate improvements in the propagation of chaos literature, such as the $\mc{O}(\nicefrac 1 N)$ error dependence shown in~\cite{lacker2023sharp}, to improve existing theoretical guarantees? \textbf{(2)} Can we leverage recent advances in the theory of log-concave sampling to design better algorithms?

Our main proposal in this work is to \textbf{decouple} the error into two terms, representing the propagation of chaos and discretization errors respectively. This simple and \emph{modular} approach immediately allows us to answer both questions in the affirmative. 
Namely, we show how to combine established propagation of chaos bounds in various settings~\cite[including the sharp rate of][]{lacker2023sharp}
with a large class of sophisticated off-the-shelf log-concave samplers,
such as interacting versions of the randomized midpoint discretization of the underdamped Langevin dynamics \cite{shen2019randomized,he2020ergodicity}, Metropolis-adjusted algorithms \cite{chewi2021optimal,wu2022minimax, altschuler2023faster}, and the proximal sampler~\cite{LeeSheTia21RGO, chen22improved,fan2023improved}. 
Our framework yields improvements upon prior state-of-the-art, such as~\cite{bou2023nonlinear, fu2023mean}, and provides a clear path for future ones.

\subsection{Contributions and Organization}
\paragraph*{Propagation of chaos at stationarity.} 
We provide three propagation of chaos results which hold in the $\mc W_2$, $\sqrt{\KL}$, and $\sqrt{\FI}$ ``metrics''; the rates reflect the distance of the $k$-particle marginal of the finite-particle system from $\pi^{\otimes k}$:
(1) In the setting of~\eqref{eq:total_energy_with_interaction}, under strong displacement convexity, we obtain a $\mc O(\sqrt{\nicefrac{k}{N}})$ rate by adapting techniques from~\cite{sznitman1991topics, malrieu2001logarithmic}; (2) without assuming displacement convexity, but assuming a weaker interaction, we obtain the sharp rate of $\widetilde{\mc{O}}(\nicefrac{k}{N})$ following~\cite{lacker2023sharp}; (3) finally, in the general setting of~\eqref{eq:total_energy}, and assuming $\mc F$ is convex along linear interpolations, we obtain a $\mc O(\sqrt{\nicefrac{k}{N}})$ rate following~\cite{chen2022uniform}.

Unlike prior works, our proofs are carried out at stationarity; thus, our proofs are \emph{self-contained}, streamlined, and include various improvements (e.g., weaker assumptions and explicit bounds). 
As a result, our work also serves as a helpful exposition to the mathematics of propagation of chaos.

\paragraph*{Discretization.}
Once the error due to particle approximation is controlled, we then obtain improved complexity guarantees by applying recent advances in the theory of log-concave sampling to the finite-particle stationary distribution.
See Table~\ref{tab:rates_classical} for a summary of our results, and the discussion in \S\ref{scn:main_results} for comparisons with prior works and an application to neural network training. 

Once again, the importance of our framework is its \emph{modularity}, which allows for any combination of uniform-in-time propagation of chaos bounds and log-concave sampler, provided that the finite-particle stationary distribution satisfies certain isoperimetric properties needed for the sampling guarantees. Toward this end, we also provide tools for verifying these isoperimetric properties with constants that hold independently of the number of particles (see \S\ref{sec:mkv_isoperimetry}).

\subsection{Related Work}\label{scn:related}

\paragraph*{Mean-field equations.}
The McKean{--}Vlasov SDE was first formulated in the works \cite{mckean1966class, funaki1984certain, Mel1996Interacting}, with origins dating to much earlier~\cite{boltzmann1872more}. It has applications in many domains, from fluid dynamics \cite{villani2002review} to game theory \cite{lasry2007mean, carmona2018probabilistic}; see \cite{chaintron2022propagationpartone, chaintron2022propagationparttwo} for a comprehensive survey. The kinetic version of this equation is known as the Boltzmann equation, and propagation of chaos has similarly been studied under a variety of assumptions~\cite{bolley2010trend, monmarche2017long, guillin2021uniform, guillin2022convergence}. One prominent application within machine learning is the study of infinitely wide two-layer neural networks in the mean-field regime (see \S\ref{scn:nn_applications}).

\paragraph*{Propagation of chaos and sampling for \eqref{eq:total_energy_with_interaction}.}
The original propagation of chaos arguments of \cite{sznitman1991topics} were first made uniform in time in~\cite{malrieu2001logarithmic, malrieu2003convergence} in both entropy and $\mc W_2$. The aforementioned works all achieve an error of order $\widetilde{\mc{O}}(\sqrt{\nicefrac{k}{N}})$, and require a strong convexity assumption on $V$ and $W$. These were later adapted for non-smooth potentials~\cite{jabin2017mean, jabin2018quantitative, bresch2023mean}.
Finally, \cite{chen2022uniform} obtained an entropic propagation of chaos bound under a higher-order smoothness assumption. See \cite{chaintron2022propagationpartone} for a more complete bibliography.

The breakthrough result of~\cite{lacker2023hierarchies} obtained the sharp bound of $\widetilde{\mc O}(\nicefrac{k}{N})$ when the interaction is sufficiently weak, and this bound was made uniform in time in~\cite{lacker2023sharp}. Their approach differs significantly from previous proofs by considering a local analysis based on the recursive BBGKY hierarchy.
These results have been extended to other divergences, e.g., the $\chi^2$ divergence, but without a uniform-in-time guarantee~\cite{hess2023higher}. In addition, \cite{monmarche2024time} showed an extension of this result under a ``convexity at infinity" assumption.

The question of sampling from minimizers of \eqref{eq:total_energy_with_interaction} was first studied in \cite{talay1996probabilistic, bossy1997stochastic, antonelli2002rate}. These works focused on the Euler--Maruyama discretization of the finite-particle system~\eqref{eq:finite_particle_interactive}, under $L^\infty$-boundedness of the gradients. Subsequently, the convergence of the Euler--Maruyama scheme has been studied in many works, including but not limited to~\cite{bao2022approximations, dos2022simulation, li2023strong}. The strategy of disentangling finite particle error from time discretization also appears in~\cite{karimi2024stochastic}, which approaches the problem from the perspective of stochastic approximation. This work, however, is not focused on obtaining quantitative guarantees. Finally,~\cite{bou2023nonlinear} considered a non-linear version of Hamiltonian Monte Carlo; we give a detailed comparison with their work in \S\ref{scn:main_results}.

\paragraph*{Propagation of chaos and sampling for \eqref{eq:total_energy}.}
The mean-field (underdamped) Langevin algorithm for minimizing~\eqref{eq:total_energy} was proposed and studied in~\cite{chen2022uniform, Chen+24PoCKinetic}. Under alternative assumptions (see \S\ref{sec:mfl_chaos}), they established propagation of chaos with a $\mc{O}(\sqrt{\nicefrac k N})$ rate, for both the overdamped and the underdamped finite-particle approximations. Recent works from the machine learning community~\cite{nitanda2022convex, suzuki2022uniform, fu2023mean, suzuki2023convergence} studied the application of these algorithms for optimizing two-layer neural networks and obtained sampling guarantees.
We provide a detailed comparison with their works in \S\ref{scn:nn_applications}.

\section{Background and Notation}
Let $\Pac$ be the set of probability measures on $\R^{d}$ that admit a density with respect to the Lebesgue measure and have finite second moment. We will also abuse notation and use the same symbol for a measure and its density when there is no confusion. We use superscripts for the particle index, and subscripts for the time variable. We will use $\mc O, \widetilde{\mc O}$ to signify upper bounds up to numeric constants and polylogarithms respectively. 
We recall the definitions of convexity and smoothness:

\begin{definition}\label{defn:strcvx_smooth}
    A function $U: \R^d \to \R$ is $\alpha$-uniformly convex (allowing for $\alpha \le 0$) and $\beta$-smooth if the following hold respectively
    \begin{align*}
        \langle \nabla U(x) - \nabla U(y), x-y\rangle &\ge \alpha\,\norm{x-y}^2 &&\text{for all}~x,y\in\R^d\,, \\
        \norm{\nabla U(x) - \nabla U(y)} &\leq \beta\,\norm{x-y} &&\text{for all}~x,y\in\R^d\,.
    \end{align*}
\end{definition}

For two probability measures $\mu, \nu \in \Pac$, we define the $\msf{KL}$ divergence and the (relative) Fisher information by
\[
    \msf{KL}(\mu \mmid \nu) \defeq \E_\mu \bigl[\log \frac{\mu}{\nu} \bigr]
    \qquad \text{and} \qquad
    \msf{FI}(\mu \mmid \nu) \defeq \E_\mu\bigl[\bigl\lVert\nabla \log \frac{\mu}{\nu}\bigr\rVert^2\bigr]\,,
\]
with the convention $\msf{KL}(\mu \mmid \nu)= \msf{FI}(\mu \mmid \nu) = \infty$ whenever $\mu \not\ll\nu$.

We recall the definition of the log-Sobolev inequality, which is used both for propagation of chaos arguments as well as mixing time bounds.

\begin{definition}[{Log-Sobolev Inequality}]\label{def:lsi}
    A measure $\pi$ satisfies a log-Sobolev inequality with parameter $C_{\msf{LSI}}$ 
    if for all $\mu \in \Pac$,
    \begin{align}\label{eq:lsi}\tag{LSI}
    \msf{KL}(\mu \mmid \pi) \leq \frac{C_{\msf{LSI}}}{2}\, \msf{FI}(\mu \mmid \pi)\,.
    \end{align}
\end{definition}
When $\log(1/\pi)$ is $\alpha$-uniformly convex for $\alpha>0$, it follows from the Bakry{--}\'Emery condition that $\pi$ satisfies~\eqref{eq:lsi} with constant $C_{\msf{LSI}} \le 1/\alpha$~\cite[Proposition 5.7.1]{bakry2014analysis}.

We can also define the $p$-Wasserstein distance $\mc W_p(\mu, \pi)$, $p \ge 1$, between $\mu, \pi$ as
\begin{align*}
    \mc{W}_p^p(\mu, \pi) = \inf_{\gamma \in \Gamma(\mu, \pi)} \int \norm{x - y}^p \, \gamma(\D x, \D y),
\end{align*}
where $\Gamma(\mu, \pi)$ is the set of all joint probability measures on $\R^d \hspace{-3pt}\times\hspace{-2pt} \R^d$ with marginals $\mu, \pi$ respectively.

Lastly, we recall that the celebrated Otto calculus interprets the space $\Pac$, equipped with the $\mc W_2$ metric, as a formal Riemannian manifold \cite{otto2001geometry}. In particular, the Wasserstein gradient of a functional $\mc L : \Pac\to\R \cup\{\infty\}$ is given as $\nabla_{\mc W_2} \mc L = \nabla \delta \mc L$.
Here, $\delta \mc L$ is the first variation defined as follows: for all $\nu_0,\nu_1 \in \Pac$, $\delta \mc L(\nu_0) : \R^d\to\R$ satisfies
\begin{align*}
   \lim_{t \searrow 0} \frac{\mc L((1-t)\,\nu_0 + t\,\nu_1) - \mc L(\nu_0)}{t} = \langle \delta \mc L(\nu_0), \nu_1 - \nu_0 \rangle
   \deq \int \delta \mc L(\nu_0)\,\D(\nu_1 - \nu_0)\,.
\end{align*}
The first variation is defined up to an additive constant, but the Wasserstein gradient is unambiguous.
See~\cite{AGS} for a rigorous development.
As a shorthand, we will write $\delta \mc L(\nu_0, x) \deq \delta \mc L(\nu_0)(x)$ and similarly $\nabla_{\mc W_2} \mc L(\nu_0, x) \deq \nabla_{\mc W_2} \mc L(\nu_0)(x)$.

\subsection{SDE Systems and Their Stationary Distributions}\label{scn:FPE&MFE}

\subsubsection{The Pairwise McKean{--}Vlasov Setting}\label{sec:mckean_vlasov}

In the formalism introduced in the previous section, we note that~\eqref{eq:mean_field_interactive} can be interpreted as Wasserstein gradient flow for~\eqref{eq:total_energy_with_interaction}.
In this paper, we refer to~\eqref{eq:mean_field_interactive} as the \emph{pairwise McKean{--}Vlasov} process. 
As noted in the introduction, it has the stationary distribution~\eqref{eq:mean_field_stat_interactive} which minimizes~\eqref{eq:total_energy_with_interaction}.

Recall also that the equation~\eqref{eq:mean_field_interactive} is the mean-field limit of the finite-particle system~\eqref{eq:finite_particle_interactive}. This $N$-particle system has the following stationary distribution: for $x^{1:N} = [x^1,\dotsc, x^N]\in \R^{d\times N}$,
\begin{align}\label{eq:Nparticle_stat_interactive}
    \mu^{1:N}(x^{1:N}) \propto \exp\Bigl( - \frac{2}{\sigma^2} 
    \sum_{i \in [N]}
    V(x^i) 
    - \frac{1}{\sigma^2\,(N-1)} 
    \sum_{i \in [N]} \sum_{j \in [N]\setminus i} 
    W(x^i - x^j)\Bigr)\,.
\end{align}
The system~\eqref{eq:finite_particle_interactive} can be viewed as an approximation to \eqref{eq:mean_field_interactive}, with the expectation term in the drift replaced by an empirical average. Note that the measure $\mu^{1:N}$ is exchangeable.\footnote{Exchangeability refers to the property that the law of $[x^{1}, \ldots, x^{N}]$ equals the law of $[x^{\sigma(1)}, \ldots, x^{\sigma(N)}]$ for any permutation $\sigma$ of $\{1, \ldots, N\}$.} While the standard approach is to apply an Euler--Maruyama discretization to \eqref{eq:finite_particle_interactive} in order to sample from \eqref{eq:mean_field_interactive}, our perspective is to write more sophisticated samplers for $\mu^{1:N}$ directly.
Indeed, unlike~\eqref{eq:mean_field_stat_interactive}, the finite-particle stationary distribution~\eqref{eq:Nparticle_stat_interactive} is explicit and amenable to sampling methods.

\subsubsection{The General McKean{--}Vlasov Setting}\label{sec:mfl}

More generally, we consider the functional~\eqref{eq:total_energy} where $\mc F$ is of the form $\mc F(\mu) = \mc F_0(\mu) + \frac{\lambda}{2} \int \norm{\cdot}^2 \, \D \mu$ with $\lambda \geq 0$.
The second term acts as regularization and is common in the literature \cite{fu2023mean, suzuki2023convergence}.
We can describe its Wasserstein gradient flow as the marginal law of a particle trajectory satisfying the following SDE\@, which we call the \emph{general McKean{--}Vlasov equation}: 
\begin{align}\label{eq:mean_field} \tag{$\msf{gMV}$}
    \D X_t = \{-\nabla_{\mc W_2}\mc F_0(\pi_t, X_t) - \lambda X_t\}\,\D t + \sigma \, \D B_t\,,
\end{align}
where $\pi_t = \law(X_t)$, and $\{B_t\}_{t\ge 0}$ is a standard Brownian motion on $\R^d$. 
The stationary distribution $\pi$ of \eqref{eq:mean_field}, and its linearization $\pi_\mu$ around a measure $\mu \in \Pac$, satisfy the following equations: 
\begin{align}\label{eq:mean_field_stat}
  \!\!  \pi(x) \propto \exp \Bigl(-\frac{2}{\sigma^2}\,\delta \mc F_0(\pi,x) - \frac{\lambda\,\norm x^2}{\sigma^2} \Bigr) \:\; \text{and} \:\;
    \pi_{\mu}(x) \propto \exp\Bigl(-\frac{2}{\sigma^2}\, \delta \mc F_0(\mu, x)- \frac{\lambda\,\norm x^2}{\sigma^2}\Bigr)\,.
\end{align}
The latter is called the \textit{proximal Gibbs distribution} with respect to $\mu$. The general dynamics corresponds to the mean-field limit of the following finite-particle system described by an $N$-tuple of stochastic processes $\{X_t^{1:N}\}_{t\ge 0}  \deq  \{(X^1_t,\dotsc, X^N_t)\}_{t \geq 0}$:
\begin{align}\label{eq:finite_particle} \tag{$\msf{gMV}_N$}
    \D X_t^i = \{-\nabla_{\mc W_2} \mc F_0(\rho_{X_t^{1:N}}, X_t^i) - \lambda X_t^i\}\,\D t + \sigma \, \D B_t^i\,,
\end{align}
and $\rho_{x^{1:N}} = \frac{1}{N} \sum_{i=1}^N \delta_{x^i}$ 
is the empirical measure of the particle system. 
The stationary distribution for~\eqref{eq:finite_particle} is given as follows~\cite[(2.16)]{chen2022uniform}: for $x^{1:N} = [x^1,\dotsc, x^N]\in \R^{d\times N}$,
\begin{align}\label{eq:Nparticle_stat}
    \mu^{1:N}(x^{1:N}) \propto \exp\Bigl( - \frac{2N}{\sigma^2}\, \mc F_0(\rho_{x^{1:N}}) - \frac{\lambda}{\sigma^2}\, \norm{x^{1:N}}^2\Bigr)\,.
\end{align}
One can show that $\nabla_{x^i} \mc F_0(\rho_{x^{1:N}}) = \frac{1}{N} \,\nabla_{\mc W_2} \mc F_0(\rho_{x^{1:N}}, x^i)$, and hence~\eqref{eq:finite_particle} is simply the Langevin diffusion corresponding to stationary measure~\eqref{eq:Nparticle_stat}.
Moreover, when $\lambda = 0$ and choosing $\mc F_0(\mu) = \int V(x)\,\mu(\D x) + \iint W(x-y)\,\mu(\D x)\,\mu(\D y)$, then the equations~\eqref{eq:mean_field},~\eqref{eq:mean_field_stat},~\eqref{eq:finite_particle}, and~\eqref{eq:Nparticle_stat} reduce to~\eqref{eq:mean_field_interactive},~\eqref{eq:mean_field_stat_interactive},~\eqref{eq:finite_particle_interactive}, and~\eqref{eq:Nparticle_stat_interactive}, respectively. 

\section{Technical Ingredients}
Our general approach for sampling from the stationary distribution $\pi$ in either~\eqref{eq:mean_field_stat_interactive} or~\eqref{eq:mean_field_stat} is to directly apply an off-the-shelf sampler for the finite-particle stationary distribution $\mu^{1:N}$. The theoretical guarantees for this procedure require two main ingredients: (1) control of the ``bias''---i.e. the error incurred by approximating $\pi$ by the $1$-particle marginal of $\mu^{1:N}$---and (2) verification of isoperimetric properties which allow for fast sampling from the measure $\mu^{1:N}$.

\subsection{Bias Control via Uniform-in-Time Propagation of Chaos}\label{scn:bias_control}

In this section, we focus on the first ingredient, namely, obtaining control of the bias via uniform-in-time propagation of chaos results.
Proofs for this section are given in \S\ref{app:bias_control}.

\subsubsection{Pairwise McKean{--}Vlasov Setting}\label{sec:mkv_chaos}

We first consider the pairwise McKean{--}Vlasov setting described in \S\ref{sec:mckean_vlasov}.
Our first propagation of chaos result uses the following three assumptions.
\begin{assumption}\label{as:smoothness}
The potentials $V, W$ are $\beta_V, \beta_W$-smooth respectively.
\end{assumption}

\begin{assumption}\label{as:pi_lsi}
The distribution $\pi$ satisfies~\eqref{eq:lsi} with parameter $C_{\msf{LSI}}(\pi)$.
\end{assumption}

\begin{assumption}\label{as:weak_interaction}
The ratio $\rho\defeq \nicefrac{\sigma^4}{8\beta_W^2 C_{\msf{LSI}}^2(\pi)}$ is at least $3$.
\end{assumption}

\begin{remark}
    Note that from \eqref{eq:mean_field_stat_interactive}, we typically would expect $C_{\msf{LSI}}^2(\pi)$ to also scale as $\sigma^4$ (e.g., in the case when $V$ and $W$ are $\alpha$-uniformly convex for $\alpha>0$). Therefore, Assumption~\ref{as:weak_interaction} is typically invariant to the scaling of $\sigma$ and can be satisfied even for $\sigma \searrow 0$.
\end{remark}
Under these assumptions, we obtain a sharp propagation of chaos result via a similar argument as \cite{lacker2022quantitative, lacker2023sharp}. We note that the former is more permissive regarding the constant in Assumption~\ref{as:weak_interaction} as compared to this work.

\begin{theorem}[{Sharp Propagation of Chaos}]\label{thm:N_choice_bias}
     Under Assumptions~\ref{as:smoothness}, \ref{as:pi_lsi} and \ref{as:weak_interaction}, for any $N \ge 100$ and $k \in [N]$, it holds that $\msf{KL}(\mu^{1:k} \mmid \pi^{\otimes k}) = \widetilde{\mc O}(dk^2/N^2)$.
     Thus, $\msf{KL}(\mu^{1:k} \mmid \pi^{\otimes k}) < \varepsilon^2$ if
        \begin{align}\label{ex:N_choice_bias}
            N \ge 100 \vee \widetilde{\Omega}\bigl(k\sqrt{d}\,\epsilon^{-1}\bigr)\,.
        \end{align}
\end{theorem}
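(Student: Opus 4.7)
The plan is to follow the BBGKY hierarchy approach of~\cite{lacker2023sharp}, carried out directly at stationarity. Set $H_k \defeq \msf{KL}(\mu^{1:k} \mmid \pi^{\otimes k})$ and $h_{k+1} \defeq H_{k+1} - H_k$. The entropy chain rule gives
\begin{align*}
  h_{k+1} = \E_{\mu^{1:k}}\bigl[\msf{KL}\bigl(\mu^{k+1 \mid 1:k}(\cdot \mid X^{1:k}) \mmid \pi\bigr)\bigr],
\end{align*}
and applying Assumption~\ref{as:pi_lsi} to the inner conditional law reduces the problem to bounding the conditional Fisher information $\E_{\mu^{1:k}}[\msf{FI}(\mu^{k+1\mid 1:k} \mmid \pi)]$.

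Next I would compute the conditional score explicitly. Starting from~\eqref{eq:Nparticle_stat_interactive}, integrating out particles $x^{k+2:N}$, and using the exchangeability of the integrated-out variables conditional on $x^{1:k+1}$, one arrives at the decomposition $\nabla_{x^{k+1}} \log \mu^{k+1 \mid 1:k}(x^{k+1} \mid x^{1:k}) - \nabla \log \pi(x^{k+1}) = -\tfrac{2}{\sigma^2}\,(T_1 + T_2)$, where
\begin{align*}
  T_1 &\defeq \tfrac{1}{N-1}\,\textstyle\sum_{j \le k}\bigl\{\nabla W(x^{k+1}-x^j) - \E_\pi[\nabla W(x^{k+1}-\cdot)]\bigr\}, \\
  T_2 &\defeq \tfrac{N-k-1}{N-1}\,\bigl\{\E_{\mu^{k+2\mid 1:k+1}}[\nabla W(x^{k+1}-\cdot)] - \E_\pi[\nabla W(x^{k+1}-\cdot)]\bigr\}.
\end{align*}
Here $T_1$ is a fluctuation term arising from the empirical average over $k$ particles, and $T_2$ is a coupling term measuring the deviation of the conditional law of a fresh particle from $\pi$.

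The two key quantitative bounds are then: (i) by Assumption~\ref{as:smoothness} and the Poincar\'e inequality implied by~\eqref{eq:lsi}, $\E_{\mu^{1:k+1}}\|T_1\|^2 \lesssim \frac{k\,d\,\beta_W^2\,C_{\msf{LSI}}(\pi)}{(N-1)^2}$; and (ii) the $\beta_W$-Lipschitzness of $\nabla W$ combined with Talagrand's $T_2$ inequality (also implied by~\eqref{eq:lsi}) yields $\|T_2\|^2 \le \frac{(N-k-1)^2}{(N-1)^2}\cdot 2\beta_W^2 C_{\msf{LSI}}(\pi)\cdot\msf{KL}(\mu^{k+2\mid 1:k+1}\mmid \pi)$, whose $\mu^{1:k+1}$-expectation equals $\frac{(N-k-1)^2}{(N-1)^2}\cdot 2\beta_W^2 C_{\msf{LSI}}(\pi)\cdot h_{k+2}$ via the chain rule. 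Combining through $\|a+b\|^2 \le 2(\|a\|^2+\|b\|^2)$ and the definition $\rho^{-1} = 8\beta_W^2 C_{\msf{LSI}}^2(\pi)/\sigma^4$ produces a recursion
\begin{align*}
  h_{k+1} \le \frac{c_1\, k\, d}{\rho\, (N-1)^2} + \frac{c_2}{\rho}\, h_{k+2}
\end{align*}
for absolute constants $c_1,c_2$. Since $h_{N+1} = 0$ and Assumption~\ref{as:weak_interaction} guarantees $c_2/\rho < 1$, the recursion unrolls as a geometric series to give $h_{k+1} \lesssim \frac{kd}{N^2}$, and summing telescopically yields $H_k = \sum_{j\le k} h_j \lesssim \frac{d\,k^2}{N^2}$ (with $\widetilde{\mc O}$ absorbing any logarithmic factors). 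Inverting then gives~\eqref{ex:N_choice_bias}.

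The main obstacle I anticipate is step (i): since $\mu^{1:k+1}$ is not a product measure, cross terms in $\E_{\mu^{1:k+1}}[\|T_1\|^2]$ do not vanish by independence. Controlling them requires either a careful change of measure against $\pi^{\otimes(k+1)}$ (paying a $\sqrt{H_{k+1}}$-type correction that must be re-absorbed into the recursion) or a uniform-in-$N$ Poincar\'e inequality on the conditional laws $\mu^{1:k\mid k+1}$---analogous to the delicate step in~\cite[Theorem~2.2]{lacker2023sharp}. Threading the dimension $d$, the coupling constant $\rho$, and the particle count $k$ cleanly through this step is the most technical portion of the argument.
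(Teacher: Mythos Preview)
Your overall strategy---LSI at stationarity, conditional score decomposition into the fluctuation term $T_1$ and coupling term $T_2$, then Talagrand on $T_2$ to produce a recursion up the hierarchy---is correct. It differs from the paper in that you apply LSI to the \emph{conditional} $\msf{KL}$ and recurse on the increments $h_{k+1}$, whereas the paper applies LSI (via tensorization for $\pi^{\otimes k}$) directly to the full $\msf{K}_k = H_k$ and recurses on it, obtaining $\msf{K}_k \le \mc{C}_k\,\bigl(\msf{K}_{k+1} + O(k^2(\msf{K}_3+d)/N^2)\bigr)$ with a $k$-dependent contraction factor $\mc{C}_k = \tfrac{k/\rho}{1+k/\rho}$. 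Both routes are in principle viable, but the difference matters for how the gap you flag in step~(i) gets closed.

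That gap is real, and your proposed fixes do not quite land. The cross terms in $\E_{\mu^{1:k+1}}\|T_1\|^2$---and even the diagonal terms, since $x^j \sim \mu^1$ rather than $\pi$---cannot be bounded by $O(kd/N^2)$ without already knowing something about the low-order marginals. A change of measure against $\pi^{\otimes(k+1)}$ brings in $H_{k+1}$, which does not sit inside a recursion on $h_{k+1}$; and a uniform Poincar\'e for the conditionals $\mu^{1:k\mid k+1}$ is not among the assumptions and would be an extra hypothesis. What the paper does instead is: bound each cross term by conditioning on two particles and applying Cauchy--Schwarz plus Talagrand, yielding $\beta_W^2 C_{\msf{LSI}}\sqrt{(\msf{K}_3+d)\,\msf{K}_3}$ per term (see~\eqref{eq:boosting-kN}); accept the resulting suboptimal $O(k^3/N^2)$ inhomogeneity; unroll the recursion using the polynomial decay of $\prod_\ell \mc{C}_\ell$ (Lemma~\ref{lem:coefficient_product_bound}) to first establish $\msf{K}_3 \lesssim d(\log N)/N^2$; and only then \emph{bootstrap} by substituting this bound back into the cross-term estimate to sharpen $k^3 \to k^2$. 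Your outline omits this two-stage argument; without it, neither your claimed bound on $\E\|T_1\|^2$ nor the sharp $k^2/N^2$ rate follows.
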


We note that the rate in Theorem~\ref{thm:N_choice_bias} is sharp; see the Gaussian case in Example~\ref{eg:Gaussian}. 
A condition such as Assumption~\ref{as:weak_interaction} is in general necessary, since otherwise the minimizer of~\eqref{eq:total_energy_with_interaction} may not even be unique~\cite[see the example and discussion in][]{lacker2023sharp}.
However, it can be restrictive, as it requires the interaction to be sufficiently weak.
With the following convexity assumption, we can obtain a propagation of chaos result without Assumption~\ref{as:weak_interaction}.

\begin{assumption}\label{as:str_cvx_VW}
    The potentials $V, W$ are $\alpha_V,\alpha_W$-uniformly convex with $\alpha_V + {\alpha_W^-} > 0$.
    Here, ${\alpha_W^-} \deq \alpha_W \wedge 0$ denotes the negative part of $\alpha_W$. 
\end{assumption}

The following weaker result consists of two parts. The first, a Wasserstein propagation of chaos result, is based on~\cite{sznitman1991topics}. The second, building on the first, is a uniform-in-time entropic propagation of chaos bound following from a Fisher information bound.
The arguments are similar to those in~\cite{malrieu2001logarithmic, malrieu2003convergence}, albeit simplified (since we work at stationarity) and presented here with explicit constants. 

\begin{theorem}[{Weak Propagation of Chaos}]\label{thm:weak_prop_chaos}
    Under Assumptions~\ref{as:smoothness} and~\ref{as:str_cvx_VW}, for any $N \ge \frac{\alpha_V - {\alpha_W^-}}{\alpha_V + {\alpha_W^-}} \vee 2,$
    if we denote $\alpha \deq \alpha_V + {\alpha_W^-}$, then 
    \begin{align}
        \mc W_2^2(\mu^{1:k}, \pi^{\otimes k}) &\leq \frac{4\beta_W^2 \sigma^2 d}{\alpha ^3}\,\frac k N\,, \label{eq:w2_weak_prop} \\
        \msf{KL}(\mu^{1:k} \mmid \pi^{\otimes k}) \leq \frac{\sigma^2}{4\alpha} \FI(\mu^{1:k} \mmid \pi^{\otimes k})
        &\le \frac{132\beta_W^2\,{(\beta_V+\beta_W)}^2\, d}{\alpha^4 }\,\frac{k}{N}\,. \label{eq:kl_prop}
    \end{align}
\end{theorem}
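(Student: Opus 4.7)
The plan is to carry out the classical Sznitman/Malrieu synchronous coupling, but entirely at stationarity so that we can replace the Grönwall step by simply setting a time derivative equal to zero. Let $(X_t^{1:N})_{t\in\mathbb{R}}$ be a stationary trajectory of~\eqref{eq:finite_particle_interactive}, so $X_t^{1:N}\sim\mu^{1:N}$, and let $(Y_t^{1:N})_{t\in\mathbb{R}}$ consist of $N$ independent stationary solutions of~\eqref{eq:mean_field_interactive}, so $Y_t^i\sim\pi$ i.i.d., driven by the same Brownian motions $B^i$. Both systems are jointly exchangeable, so $\Delta\defeq\mathbb{E}\|X_t^i-Y_t^i\|^2$ is independent of $i$ and $t$. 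Both bounds will reduce to controlling $\Delta$ and an i.i.d.\ fluctuation.

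\textbf{Wasserstein bound.} Applying It\^o to $\Phi_t\defeq\sum_i\|X_t^i-Y_t^i\|^2$ (the noises cancel), I decompose the interaction drift difference as $\frac{1}{N-1}\sum_{j\neq i}\nabla W(X^i-X^j)-\int\nabla W(Y^i-\cdot)\,\dee\pi = \frac{1}{N-1}\sum_{j\neq i}[\nabla W(X^i-X^j)-\nabla W(Y^i-Y^j)]+R^i$, where $R^i$ is a mean-zero fluctuation under $\pi^{\otimes N}$. Using the $(i,j)$-symmetrization trick (exploiting $W$ even) together with $\alpha_V$- and $\alpha_W^-$-convexity, the first two quadratic forms contract $\Phi_t$ at rate $\approx 2\tilde\alpha\defeq 2\bigl(\alpha_V+\tfrac{N}{N-1}\alpha_W^-\bigr)$, and the condition $N\ge\frac{\alpha_V-\alpha_W^-}{\alpha_V+\alpha_W^-}$ is precisely what guarantees $\tilde\alpha\ge\alpha/2$. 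After a Young inequality on $\langle X^i-Y^i,R^i\rangle$ and setting $\frac{\dee}{\dee t}\mathbb{E}\Phi_t=0$, I obtain $\mathbb{E}\Phi\lesssim\frac{N}{\tilde\alpha^2}\,\mathbb{E}\|R^1\|^2$. Conditioning on $Y^1$ and using that $\{Y^j\}_{j\neq 1}$ are i.i.d.\ from $\pi$ reduces $\mathbb{E}\|R^1\|^2$ to $\frac{1}{N-1}\mathrm{Var}_\pi(\nabla W(Y^1-\cdot))$, which by $\beta_W$-Lipschitzness and the Poincar\'e inequality of $\pi$ (constant $\le\sigma^2/(2\alpha)$ by Bakry{--}\'Emery applied to the linearized potential $V+\int W(\cdot-y)\,\dee\pi(y)$) is at most $\frac{\beta_W^2\sigma^2 d}{2\alpha(N-1)}$. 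Substituting and dividing by $N$ gives $\Delta\le\frac{4\beta_W^2\sigma^2 d}{\alpha^3 N}$, and the stated $\mc W_2^2$ bound follows from $\mc W_2^2(\mu^{1:k},\pi^{\otimes k})\le k\Delta$.

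\textbf{KL/Fisher bound.} The same Bakry{--}\'Emery argument already shows $C_{\msf{LSI}}(\pi)\le\sigma^2/(2\alpha)$, which tensorizes to $\pi^{\otimes k}$, yielding the first inequality of~\eqref{eq:kl_prop}. For the Fisher bound, the explicit form of $\mu^{1:N}$ makes the score $\nabla_{x^i}\log\tfrac{\mu^{1:N}}{\pi^{\otimes N}}=\tfrac{2}{\sigma^2}\bigl[g(x^i)-\tfrac{1}{N-1}\sum_{j\neq i}\nabla W(x^i-x^j)\bigr]$ with $g(y)\defeq\int\nabla W(y-z)\,\dee\pi(z)$. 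Marginalization of scores as conditional expectations, Jensen, and exchangeability give $\msf{FI}(\mu^{1:k}\mmid\pi^{\otimes k})\le\frac{k}{N}\msf{FI}(\mu^{1:N}\mmid\pi^{\otimes N})=\frac{4k}{\sigma^4}\,\mathbb{E}\|\xi^1\|^2$ for $\xi^1$ the bracketed expression. Using the coupling I split $\xi^1=[g(X^1)-g(Y^1)]+[g(Y^1)-\tfrac{1}{N-1}\sum\nabla W(Y^1-Y^j)]+\tfrac{1}{N-1}\sum[\nabla W(Y^1-Y^j)-\nabla W(X^1-X^j)]$. The first and third are controlled by $\beta_W$-Lipschitzness via $\Delta$, and the second is exactly $-R^1$ from Step 1. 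Plugging in $\Delta\lesssim\beta_W^2\sigma^2 d/(\alpha^3 N)$ and $\mathbb{E}\|R^1\|^2\lesssim\beta_W^2\sigma^2 d/(\alpha N)$, I obtain $\mathbb{E}\|\xi^1\|^2\lesssim\frac{\beta_W^2\sigma^2 d}{\alpha^3 N}(\beta_W^2+\alpha^2)$. Since $\alpha\le\alpha_V\le\beta_V$, the factor $\beta_W^2+\alpha^2\lesssim(\beta_V+\beta_W)^2$, and the claimed constants emerge after applying the LSI.

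\textbf{Main obstacle.} The delicate step is the $(i,j)$-symmetrization of the interaction cross term under the weakened convexity assumption $\alpha_V+\alpha_W^->0$: the contraction rate $\tilde\alpha$ degrades by a factor $N/(N-1)$ on the $\alpha_W^-$ part, and one must verify that the hypothesis $N\ge\frac{\alpha_V-\alpha_W^-}{\alpha_V+\alpha_W^-}$ suffices to keep $\tilde\alpha\gtrsim\alpha$, with all constants tracked explicitly in order to recover the numerical prefactors (including the $132$) in~\eqref{eq:kl_prop}. A secondary subtlety is ensuring that the synchronous coupling at stationarity is well-defined; this follows from uniqueness/ergodicity of the coupled SDE system guaranteed by the same convexity assumption.
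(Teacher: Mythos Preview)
Your proposal is correct and follows the same Sznitman/Malrieu synchronous-coupling strategy as the paper, with two differences in execution worth noting.

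For the $\mc W_2$ bound, the paper packages the argument as ``semigroup contraction $+$ one-step error $+$ $h\to 0$'' (Lemmas~\ref{lem:one_step_w2_contract} and~\ref{lem:one_step_w2_err}), whereas you write a single differential inequality $\frac{\dee}{\dee t}\E\Phi_t\le -\tilde\alpha\,\E\Phi_t+C$ and read off the bound directly. These are equivalent; your version is slightly more streamlined. One caution: your ``set the derivative to zero'' step presumes the \emph{coupling} is stationary, but the coupled $2dN$-dimensional SDE has only $dN$ Brownian motions, so ergodicity does not follow from convexity alone. The clean fix is to replace ``$\frac{\dee}{\dee t}\E\Phi_t=0$'' by ``$\limsup_{t\to\infty}\E\Phi_t\le C/\tilde\alpha$'', which follows immediately from the differential inequality since $C$ is time-independent (the $Y$-marginal is stationary).

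For the Fisher bound, the routes diverge more. The paper observes that $\nabla\log(\mu^{1:N}/\pi^{\otimes N})$ is $\tfrac{8}{\sigma^2}(\beta_V+\beta_W)$-Lipschitz and uses this to transfer the expectation from $\mu^{1:N}$ to $\pi^{\otimes N}$ via the $\mc W_2$ bound; $(\beta_V+\beta_W)^2$ enters directly from this Lipschitz constant. Your decomposition $\xi^1=A+B+C$ through the coupling only ever uses $\beta_W$-Lipschitzness of $\nabla W$, so your intermediate bound $\E\|\xi^1\|^2\lesssim\frac{\beta_W^2\sigma^2 d}{\alpha^3 N}(\beta_W^2+\alpha^2)$ is actually tighter---$\beta_V$ enters only at the very end via $\alpha\le\beta_V$ to match the stated form. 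Both approaches obtain the $k/N$ factor the same way (score marginalization as a conditional expectation, Jensen, exchangeability), and your constants come out comfortably below $132$.
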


\subsubsection{General McKean{--}Vlasov Setting}\label{sec:mfl_chaos}

In the more general case where we aim to minimize~\eqref{eq:total_energy} for a generic functional $\mc F$ of the form $\mc F(\mu) = \mc F_0(\mu) + \frac{\lambda}{2} \int \norm{\cdot}^2 \, \D \mu$, we impose the following assumptions. They can be largely seen as generalizations of the conditions for the pairwise case, and they are inherited from~\cite{chen2022uniform, suzuki2023convergence}. There is an additional convexity condition (Assumption~\ref{as:functional_convexity}), which in the pairwise McKean{--}Vlasov setting amounts to positive semidefiniteness of the kernel $(x,y) \mapsto W(x-y)$ on $\R^d \times \R^d$; thus, in general, the following assumptions are incomparable with the ones in \S\ref{sec:mkv_chaos}.

\begin{assumption}\label{as:functional_convexity}
    The functional $\mc F_0$ is convex in the usual sense. For all $\nu_0, \nu_1 \in \Pac$, $t \in [0,1]$,
    \begin{align*}
        \mc F_0((1-t)\, \nu_0 + t\, \nu_1) \leq (1-t)\, \mc F_0(\nu_0) + t\, \mc F_0(\nu_1)\,.
    \end{align*}
\end{assumption}

\begin{assumption}\label{as:functional_smoothness}
    The functional $\mc F_0$ is smooth 
    in the sense that for all $x, y \in \R^d$, $\nu, \nu' \in \Pac$, there is a uniform constant $\beta$ such that
    \begin{align*}
        \norm{\nabla_{\mc W_2} \mc F_0(\nu, x) - \nabla_{\mc W_2} \mc F_0(\nu', y)} \leq \beta\, (\norm{x-y} + \mc W_1(\nu, \nu'))\,.
    \end{align*}
\end{assumption}

\begin{assumption}\label{as:prox_gibbs_lsi}
    The proximal Gibbs measures satisfy~\eqref{eq:lsi} with a uniform constant: namely, it holds that $C_{\msf{LSI}}(\pi) \vee \sup_{\mu\in\mc P_2(\R^d)} C_{\msf{LSI}}(\pi_\mu)\le \overline C_{\msf{LSI}}$.
\end{assumption}

\begin{remark}
    These assumptions taken together cover settings not covered in the preceding sections, including optimization of two-layer neural networks. See~\cite[Remark 3.1]{chen2022uniform} and \S\ref{scn:nn_applications}.
\end{remark}

Under these assumptions, we can derive an entropic propagation of chaos bound by following the proof of~\cite{chen2022uniform}. Through a tighter analysis, we are able to reduce the dependence on the condition number $\kappa \deq \overline{C}_{\msf{LSI}} \beta/\sigma^2$ from $\kappa^2$ to $\kappa$. 

\begin{theorem}[Propagation of Chaos for General Functionals]
\label{thm:entropic_prop_chaos_general_functional}
    Under Assumptions~\ref{as:functional_convexity},~\ref{as:functional_smoothness}, and~\ref{as:prox_gibbs_lsi}, for $N \geq \nicefrac{160 \beta \overline C_{\msf{LSI}}}{\sigma^2},$
    we have
    \[
        \frac{1}{2\overline C_{\msf{LSI}}} \mc W_2^2(\mu^{1:k},\pi^{\otimes k}) \le \KL(\mu^{1:k} \mmid \pi^{\otimes k}) \leq \frac{33 \beta \overline C_{\msf{LSI}} dk}{\sigma^2 N}\,.
    \]
\end{theorem}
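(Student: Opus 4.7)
My plan is to adapt the stationary-setting argument of~\cite{chen2022uniform}. The approach has three main reductions: (a)~pass from the $k$-particle KL to the $N$-particle KL via the subadditivity inequality for exchangeable measures, $\KL(\mu^{1:k}\mmid \pi^{\otimes k})\le (k/N)\,\KL(\mu^{1:N}\mmid \pi^{\otimes N})$; (b)~tensorize Assumption~\ref{as:prox_gibbs_lsi} to conclude that $\pi^{\otimes N}$ satisfies~\eqref{eq:lsi} with constant $\overline C_{\msf{LSI}}$, yielding $\KL(\mu^{1:N}\mmid \pi^{\otimes N})\le \tfrac{\overline C_{\msf{LSI}}}{2}\,\FI(\mu^{1:N}\mmid \pi^{\otimes N})$; and (c)~bound the Fisher information using the explicit forms~\eqref{eq:mean_field_stat} and~\eqref{eq:Nparticle_stat}. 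The $\mc W_2^2$ lower bound in the theorem is then immediate from Talagrand's $T_2$ inequality for $\pi^{\otimes k}$.

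For step~(c), the first-variation identity $\nabla_{x^i}\mc F_0(\rho_{x^{1:N}}) = \tfrac{1}{N}\,\nabla_{\mc W_2}\mc F_0(\rho_{x^{1:N}}, x^i)$ combined with a short computation gives
\begin{align*}
    \FI(\mu^{1:N}\mmid \pi^{\otimes N}) = \tfrac{4}{\sigma^4}\,\sum_{i=1}^{N}\,\E_{\mu^{1:N}}\bigl\lVert \nabla_{\mc W_2}\mc F_0(\rho_{x^{1:N}}, x^i) - \nabla_{\mc W_2}\mc F_0(\pi, x^i)\bigr\rVert^{2}\,.
\end{align*}
To control each summand I would introduce a coupling $(X^{1:N}, Y^{1:N})$ of $\mu^{1:N}$ with $\pi^{\otimes N}$ (for instance, the optimal $\mc W_2$-coupling, or a synchronous SDE coupling as in~\cite{chen2022uniform}) and invoke Assumption~\ref{as:functional_smoothness} to split the integrand into a \emph{coupling} part bounded by $\beta^2\,(\|X^i-Y^i\|^2 + \mc W_1^2(\rho_{X^{1:N}}, \rho_{Y^{1:N}}))$, and an \emph{IID fluctuation} part that compares $\nabla_{\mc W_2}\mc F_0(\rho_{Y^{1:N}}, y)$ with $\nabla_{\mc W_2}\mc F_0(\pi, y)$ for $Y^{1:N}\sim \pi^{\otimes N}$. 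Summed over $i$ and in expectation, the coupling part aggregates to a constant times $\beta^2\,\mc W_2^2(\mu^{1:N},\pi^{\otimes N})$; Talagrand's inequality then converts this into a multiple of $\KL(\mu^{1:N}\mmid \pi^{\otimes N})$, producing a self-bootstrap to be absorbed when $N$ exceeds the stated threshold.

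The main obstacle is treating the IID fluctuation term at the sharp $O(\beta^2 d/N)$ rate: a naive use of the $\mc W_1$-Lipschitz property in Assumption~\ref{as:functional_smoothness} together with empirical-measure convergence in $\mc W_1$ would produce the cursed rate $O(N^{-1/d})$. The resolution is to exploit the \emph{componentwise} Lipschitz structure of the empirical-measure evaluation: perturbing a single $Y^j$ by $\Delta$ shifts $\rho_{Y^{1:N}}$ by at most $\|\Delta\|/N$ in $\mc W_1$, hence $(y^1,\dotsc,y^N)\mapsto \nabla_{\mc W_2}\mc F_0(\rho_{y^{1:N}}, y)$ is $(\beta/N)$-Lipschitz in each coordinate on $(\R^d)^N$. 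Combining this with the Poincar\'e inequality for $\pi^{\otimes N}$ (constant $\overline C_{\msf{LSI}}$ by tensorization) yields the correct $O(d/N)$ rate, and a careful accounting of constants when closing the self-bootstrap then produces the improved linear dependence on $\kappa\deq \overline C_{\msf{LSI}}\beta/\sigma^2$, as opposed to the $\kappa^2$ dependence obtained in~\cite{chen2022uniform}.
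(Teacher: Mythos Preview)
Your plan has two genuine gaps.

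\textbf{The self-bootstrap does not close.} After applying $\KL\le\tfrac{\overline C_{\msf{LSI}}}{2}\,\FI$ and your decomposition, the coupling part contributes (up to constants) $\tfrac{\overline C_{\msf{LSI}}}{\sigma^4}\cdot\beta^2\,\mc W_2^2(\mu^{1:N},\pi^{\otimes N})\le \tfrac{2\beta^2\overline C_{\msf{LSI}}^2}{\sigma^4}\,\KL(\mu^{1:N}\mmid\pi^{\otimes N})$ on the right-hand side. The coefficient in front of $\KL$ is of order $\kappa^2=(\beta\overline C_{\msf{LSI}}/\sigma^2)^2$ with \emph{no} factor of $1/N$, so it cannot be ``absorbed when $N$ exceeds the stated threshold''; it would instead force $\kappa\lesssim 1$, far stronger than the hypothesis $N\ge 160\kappa$. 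The paper's route avoids this because its self-referential terms come from the decoupling corrections and carry an explicit $1/N$.

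\textbf{The Poincar\'e trick handles variance, not bias.} Your fluctuation bound controls only
\[
\E_{Y^{1:N}\sim\pi^{\otimes N}}\bigl\lVert \nabla_{\mc W_2}\mc F_0(\rho_{Y^{1:N}},y)-\E_{Y'^{1:N}\sim\pi^{\otimes N}}\nabla_{\mc W_2}\mc F_0(\rho_{Y'^{1:N}},y)\bigr\rVert^2\,,
\]
whereas the quantity appearing in $\FI$ is centered at $\nabla_{\mc W_2}\mc F_0(\pi,y)$. For a general nonlinear $\mc F_0$ the bias $\E_Y[\nabla_{\mc W_2}\mc F_0(\rho_{Y^{1:N}},y)]-\nabla_{\mc W_2}\mc F_0(\pi,y)$ does not vanish (it does in the pairwise case, where $\nu\mapsto\nabla_{\mc W_2}\mc F_0(\nu,\cdot)$ is affine in $\nu$, but not for e.g.\ $\mc F_0(\mu)=\sum_i\ell(\int f_\theta(x_i)\,\mu(\D\theta),y_i)$ with nonlinear $\ell$). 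Under Assumption~\ref{as:functional_smoothness} alone the only available bound on this bias is $\beta\,\E\mc W_1(\rho_{Y^{1:N}},\pi)$---exactly the cursed rate you flagged. A dimension-free $O(1/N)$ bias bound would need second-order regularity of $\nu\mapsto\delta\mc F_0(\nu,\cdot)$, which is not among the hypotheses. A diagnostic: your argument never invokes the convexity Assumption~\ref{as:functional_convexity}.

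The paper's proof is structurally different. It first uses convexity to obtain the ``entropy toast'' inequality $\tfrac{\sigma^2}{2}\,\KL(\mu^{1:N}\mmid\pi^{\otimes N})\le \mc E^N(\mu^{1:N})-N\mc E(\pi)$, then bounds the energy gap by comparing each conditional $\mu^{i\mid -i}(\cdot\mid x^{-i})$ to the proximal Gibbs measure $\tau_{x^{-i}}\propto\exp(-\tfrac{2}{\sigma^2}\,\delta\mc F(\rho_{x^{-i}},\cdot))$ via the LSI of Assumption~\ref{as:prox_gibbs_lsi}. The resulting Fisher-type term involves only the \emph{leave-one-out} distance $\mc W_1(\rho_{x^{1:N}},\rho_{x^{-i}})\le\tfrac{1}{N(N-1)}\sum_{j\ne i}\lVert x^i-x^j\rVert$, so an empirical measure is never compared with $\pi$ and the bias issue never arises; the remaining correction term is linear in $\beta$ (not $\beta^2$), which is what yields the $\kappa$ rather than $\kappa^2$ constant.
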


Among these assumptions, the hardest to verify is the uniform LSI of Assumption~\ref{as:prox_gibbs_lsi}.
Following~\cite{suzuki2023convergence}, we introduce the following sufficient condition for the validity of Assumption~\ref{as:prox_gibbs_lsi}; see Lemma~\ref{lem:uniform_lsi} for a more precise statement.

\begin{assumption}\label{as:functional_bdd_grad}
There exists a uniform bound on the Wasserstein gradient of the interaction term $\mc F_0$: for some constant $B < \infty$ and all $\mu \in \Pac$, $x \in \R^d$,
    $\norm{\nabla_{\mc W_2} \mc F_0(\mu, x)} \leq B\,.$
\end{assumption}

\begin{lemma}[Informal]\label{lem:prox_gibbs_lsi_informal}
    Assumptions~\ref{as:functional_smoothness} and~\ref{as:functional_bdd_grad} 
    imply Assumption~\ref{as:prox_gibbs_lsi} with an explicit constant $\overline C_{\msf{LSI}}$, given in terms of $B$, $\beta$, $\lambda$, and $\sigma$.
\end{lemma}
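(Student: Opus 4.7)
The plan is to exhibit each proximal Gibbs distribution $\pi_\mu$ as a Lipschitz tilt of a fixed Gaussian reference, and then invoke a standard perturbation principle for the log-Sobolev inequality. From~\eqref{eq:mean_field_stat}, the potential decomposes as
\[
    -\log \pi_\mu(x) = \frac{\lambda}{\sigma^2}\,\norm{x}^2 + \frac{2}{\sigma^2}\,\delta \mc F_0(\mu, x) + \mathrm{const.}
\]
Assuming $\lambda > 0$, the Gaussian factor $\gamma \propto \exp(-\lambda\norm{\cdot}^2/\sigma^2)$ satisfies~\eqref{eq:lsi} with constant $\sigma^2/(2\lambda)$ by Bakry--\'Emery. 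Assumption~\ref{as:functional_bdd_grad} then implies that the tilt $V_\mu(x) \defeq \frac{2}{\sigma^2}\,\delta \mc F_0(\mu, x)$ is globally $L$-Lipschitz with $L \defeq 2B/\sigma^2$, uniformly in $\mu$, while Assumption~\ref{as:functional_smoothness} supplies the $C^{1,1}$ regularity needed to manipulate gradients and Hessians rigorously.

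I then invoke an Aida--Shigekawa / Bakry--Ledoux perturbation principle: if a reference measure $\gamma$ satisfies~\eqref{eq:lsi} with constant $C$ and $\pi \propto e^{-V}\,\gamma$ where $V$ is $L$-Lipschitz, then $\pi$ satisfies~\eqref{eq:lsi} with constant at most $C\,\exp(c L^2 C)$ for a universal $c > 0$. The key input is Gaussian concentration for Lipschitz functions under $\gamma$, which is a direct consequence of~\eqref{eq:lsi} itself: it yields $\E_\gamma \exp(t V_\mu) \le \exp\bigl(t\,\E_\gamma V_\mu + \tfrac{1}{2} t^2 L^2 C\bigr)$ for all $t \in \R$, which combined with a Rothaus-type splitting of the entropy functional controls both the relative entropy and the relative Fisher information of $\pi_\mu$ against $\gamma$. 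Plugging in $C = \sigma^2/(2\lambda)$ and $L = 2B/\sigma^2$ yields an explicit bound of the form $\overline C_{\msf{LSI}} \lesssim (\sigma^2/\lambda)\,\exp(O(B^2/(\lambda\sigma^2)))$. The smoothness constant $\beta$ enters through auxiliary regularity considerations, and alternatively, when $\lambda > \beta$ one could recover a sharper $\overline C_{\msf{LSI}} \le \sigma^2/(2(\lambda-\beta))$ directly from Bakry--\'Emery via the Hessian lower bound $\nabla^2(-\log \pi_\mu) \succeq \frac{2(\lambda-\beta)}{\sigma^2}\,I$ (using Assumption~\ref{as:functional_smoothness}).

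The main obstacle is that $V_\mu$ is not bounded in $L^\infty$---only its gradient is---so the elementary Holley--Stroock perturbation does not apply directly. The workaround hinges on turning Lipschitz control of $V_\mu$ into sub-Gaussian moment control via the LSI for $\gamma$, which is precisely what the Aida--Shigekawa machinery leverages. Alternative routes, such as a Lyapunov-function argument \`a la Cattiaux--Guillin that exploits the dissipativity $\langle \nabla(-\log \pi_\mu)(x), x\rangle \ge \frac{2\lambda}{\sigma^2}\,\norm{x}^2 - \frac{2B}{\sigma^2}\,\norm{x}$ guaranteed by Assumption~\ref{as:functional_bdd_grad}, are feasible but yield messier constants and obscure the explicit dependence on $B$, $\beta$, $\lambda$, and $\sigma$ that the lemma asks for.
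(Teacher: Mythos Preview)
Your proposal is correct and follows essentially the same route as the paper: write $\pi_\mu$ as a $(2B/\sigma^2)$-Lipschitz tilt of the $(2\lambda/\sigma^2)$-strongly log-concave Gaussian and then invoke an LSI-under-Lipschitz-perturbation result. The paper cites the Brigati--Pedrotti heat-flow/transport-map estimate (Lemma~\ref{lem:lip_perturb_lsi}) rather than Aida--Shigekawa, obtaining the explicit bound $\overline C_{\msf{LSI}} \le \frac{\sigma^2}{2\lambda}\exp\bigl(\frac{2B^2}{\lambda\sigma^2} + \frac{8B}{\sqrt{2\lambda}\,\sigma}\bigr)$, which in particular does not involve $\beta$.
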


\subsection{Isoperimetric Properties of the Stationary Distributions}\label{scn:isoperimetry}

In this section, we verify the isoperimetric properties of $\pi, \mu^{1:N}$ in the~\eqref{eq:mean_field_interactive} setting, with proofs provided in \S\ref{app:iso_inv_measure}.

\subsubsection{Pairwise McKean{--}Vlasov Setting}\label{sec:mkv_isoperimetry}

If $V$, $W$ satisfy Assumptions~\ref{as:smoothness} and~\ref{as:str_cvx_VW} (i.e. $V$ and $W$ have bounded Hessians), then the potential for~\eqref{eq:mean_field_stat_interactive}, i.e. $\log(1/\pi)$, 
is  $\frac{2}{\sigma^2}\,(\alpha_V + \alpha_W)$-convex and $\frac{2}{\sigma^2}\,(\beta_V + \beta_W)$-smooth.
By the Bakry{--}\'{E}mery condition, $\pi$ satisfies~\eqref{eq:lsi} with parameter $C_{\msf{LSI}}(\pi) \le \nicefrac{\sigma^2}{2\,(\alpha_V + \alpha_W)}$.

Similarly, for the invariant measure $\mu^{1:N}$ in~\eqref{eq:Nparticle_stat_interactive}, we can prove the following.

\begin{lemma}\label{lem:strongCvxCase}
    If $V$ and $W$ satisfy Assumption~\ref{as:smoothness}, then $\log(1/\mu^{1:N})$ is $\frac{2}{\sigma^2}\,(\beta_V + \frac{N}{N-1}\,\beta_W)$-smooth.
    
    If $V$ and $W$ satisfy Assumption~\ref{as:str_cvx_VW}, then $\log(1/\mu^{1:N})$ is $\frac{2}{\sigma^2}\,(\alpha_V + \frac{N}{N-1}\,{\alpha_W^-})$-convex.\footnote{Only the negative part of $\alpha_W$ contributes to the strong log-concavity of $\mu^{1:N}$.
    This is consistent with~\cite[Theorem 5.15]{villani2003topics}, which asserts that when $\alpha_W > 0$, the interaction energy $\mu \mapsto \iint W(x-y)\,\mu(\D x) \,\mu(\D y)$ is $\alpha_W$-strongly displacement convex over the subspace of probability measures with fixed mean, but only weakly convex
    over the full Wasserstein space.}
\end{lemma}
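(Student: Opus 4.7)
The plan is to compute the Hessian of the negative log-density
\[
U(x^{1:N}) \deq -\log \mu^{1:N}(x^{1:N}) = \tfrac{2}{\sigma^2} \sum_{i \in [N]} V(x^i) + \tfrac{1}{\sigma^2 (N-1)} \sum_{i \in [N]} \sum_{j \in [N]\setminus i} W(x^i - x^j) + \text{const}\,,
\]
and to bound its eigenvalues directly using the spectral bounds on $\nabla^2 V$ and $\nabla^2 W$. Viewing $\nabla^2 U \in \R^{Nd \times Nd}$ as a block matrix with $d \times d$ blocks, the $V$-part is block diagonal with $\frac{2}{\sigma^2} \nabla^2 V(x^k)$ on the $k$-th diagonal block. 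For the $W$-part, using that $W$ is even (so $\nabla W$ is odd and $\nabla^2 W$ is invariant under negation of its argument), differentiating the sum twice shows that the diagonal $(k,k)$-block is $\frac{2}{\sigma^2(N-1)} \sum_{j \neq k} \nabla^2 W(x^k - x^j)$ and the off-diagonal $(k,\ell)$-block equals $-\frac{2}{\sigma^2(N-1)} \nabla^2 W(x^k - x^\ell)$.

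The key algebraic step is to pair up the diagonal and off-diagonal contributions of the $W$-part into a sum over unordered pairs. For any $v = (v^1,\ldots,v^N) \in \R^{Nd}$, I would show
\[
v^{\top} \nabla^2 U \, v = \frac{2}{\sigma^2} \sum_{k \in [N]} (v^k)^{\top} \nabla^2 V(x^k) \, v^k + \frac{2}{\sigma^2 (N-1)} \sum_{k < \ell} (v^k - v^\ell)^{\top} \nabla^2 W(x^k - x^\ell) \, (v^k - v^\ell)\,.
\]
Combined with the elementary identity $\sum_{k < \ell} \norm{v^k - v^\ell}^2 = N \norm{v}^2 - \bigl\lVert \sum_k v^k\bigr\rVert^2 \leq N \norm{v}^2$, this is the source of the factor $N/(N-1)$: the $1/(N-1)$ in the normalization is almost exactly cancelled by the fact that each $v^k$ participates in $N-1$ pairs.

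The two claims then follow by substituting the spectral bounds on $\nabla^2 W$ into the pairwise sum. For smoothness under Assumption~\ref{as:smoothness}, using $-\beta_W I \preceq \nabla^2 W \preceq \beta_W I$ bounds the second sum in absolute value by $\frac{2 \beta_W}{\sigma^2 (N-1)} \sum_{k<\ell} \norm{v^k - v^\ell}^2 \leq \frac{2 N \beta_W}{\sigma^2 (N-1)} \norm v^2$, giving the operator-norm bound $\frac{2}{\sigma^2}(\beta_V + \frac{N}{N-1}\beta_W)$. For convexity under Assumption~\ref{as:str_cvx_VW}, when $\alpha_W \geq 0$ the second sum is non-negative and contributes nothing, whereas when $\alpha_W < 0$ it is lower-bounded by $\frac{2 \alpha_W}{\sigma^2(N-1)} \sum_{k<\ell} \norm{v^k - v^\ell}^2 \geq \frac{2 N \alpha_W}{\sigma^2 (N-1)} \norm v^2$; in both cases this matches $\frac{2}{\sigma^2} \cdot \frac{N}{N-1} \alpha_W^-$. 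There is no genuine obstacle here---the proof is essentially a Hessian computation together with the pairing identity above, and makes transparent why only the negative part of $\alpha_W$ enters the convexity bound.
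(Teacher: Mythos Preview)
Your proposal is correct and follows essentially the same route as the paper: both compute the Hessian as a block matrix, rewrite the $W$-contribution to the quadratic form as $\sum_{k<\ell}(v^k-v^\ell)^\top \nabla^2 W(x^k-x^\ell)(v^k-v^\ell)$, and then bound $\sum_{k<\ell}\norm{v^k-v^\ell}^2$ by $N\norm{v}^2$. The only cosmetic difference is that the paper packages this last bound as an eigenvalue estimate for the circulant matrix $\msf M = 2(NI_N - 1_N1_N^\top)\otimes I_d$, whereas you invoke the identity $\sum_{k<\ell}\norm{v^k-v^\ell}^2 = N\norm{v}^2 - \norm{\sum_k v^k}^2$ directly; these are equivalent.
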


We now consider the non-log-concave case. It is standard in the sampling literature that the assumption of~\eqref{eq:lsi} for the stationary distribution yields mixing time guarantees.
Since our strategy is to sample from~\eqref{eq:Nparticle_stat_interactive}, we therefore seek an LSI for $\mu^{1:N}$, formalized as the following assumption.
\begin{assumption}\label{as:lsi_N}
    The distribution $\mu^{1:N}$ satisfies~\eqref{eq:lsi} with parameter $C_{\msf{LSI}}(\mu^{1:N})$.
\end{assumption}

In this section, we provide an easily verifiable condition, combining a Holley{--}Stroock condition~\cite{HolStr1987LSI} with a weak interaction condition, for this assumption to hold with an $N$-independent constant.

\begin{assumption}\label{as:bdd_perturb_VW}
    The potentials $V$ and $W$ can be decomposed as $V = V_0 + V_1$ and $W = W_0 + W_1$ such $V_0$, $W_0$ satisfy Assumption~\ref{as:str_cvx_VW} and $\osc(V_1), \osc(W_1) < \infty$, where for a function $U : \R^d\to\R$ we define $\osc(U) \deq \sup U - \inf U$. Furthermore, the following weak interaction condition holds:
    \begin{align*}
        \frac{\sigma^2}{\beta_W \overline C_{\msf{LSI}}} \ge \sqrt 6\,, \qquad \text{where}\qquad \overline C_{\msf{LSI}} \deq \frac{\sigma^2}{\alpha_{V_0} + \frac N {N-1}\,{\alpha_{W_0}^-}} \exp\Bigl(\frac{2}{\sigma^2}\, \bigl(\osc(V_1) +\osc(W_1)\bigr)\Bigr)\,.
    \end{align*}
\end{assumption}
A careful application of the Holley{--}Stroock perturbation principle yields the following lemma.

\begin{lemma}\label{lem:bdd_perturb_meanfield}
    Under Assumption~\ref{as:bdd_perturb_VW}, $\pi$, $\mu^{1:N}$ satisfy~\eqref{eq:lsi} with parameters
    \begin{align}
        C_{\msf{LSI}}(\pi) &\leq \frac{\sigma^2}{2\,(\alpha_{V_0} + \alpha_{W_0})} \exp\Bigl(\frac{2}{\sigma^2}\,\bigl(\osc(V_1) + \osc(W_1)\bigr)\Bigr)
        \le \frac{1}{2}\,\overline C_{\msf{LSI}}\,,\label{eq:clsi_pi_est} \\
        C_{\msf{LSI}}(\mu^{1:N}) &\le \overline C_{\msf{LSI}}\nonumber\,.
    \end{align}
    In particular, Assumption~\ref{as:weak_interaction} holds.
\end{lemma}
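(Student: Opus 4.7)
The proof splits naturally into three pieces: an LSI bound for $\pi$, an LSI bound for $\mu^{1:N}$, and a verification of Assumption~\ref{as:weak_interaction}. The essential tools are Bakry--\'Emery applied to the strongly convex part and the Holley--Stroock perturbation principle applied to the bounded part, but some care is required at the $N$-particle level.

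\textbf{LSI for $\pi$.} I would work directly with the self-consistent density~\eqref{eq:mean_field_stat_interactive} and write
\[
  -\log \pi(x) = \mathrm{const} + \tfrac{2}{\sigma^2}\,\bigl\{V_0(x) + (W_0 \ast \pi)(x)\bigr\} + \tfrac{2}{\sigma^2}\,\bigl\{V_1(x) + (W_1 \ast \pi)(x)\bigr\}\,.
\]
Convolution against the probability measure $\pi$ preserves uniform convexity (by differentiation under the integral, $\nabla^2 (W_0 \ast \pi) \succcurlyeq \alpha_{W_0}\,\ident$), so the first summand is $\tfrac{2}{\sigma^2}\,(\alpha_{V_0} + \alpha_{W_0})$-uniformly convex, and Bakry--\'Emery gives LSI for the corresponding reference measure with constant $\sigma^2/[2\,(\alpha_{V_0}+\alpha_{W_0})]$. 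The second summand is a bounded perturbation whose oscillation is at most $\tfrac{2}{\sigma^2}\,(\osc(V_1) + \osc(W_1))$, using $\osc(W_1 \ast \pi) \le \osc(W_1)$. Holley--Stroock then yields~\eqref{eq:clsi_pi_est}, and the monotonicity $\alpha_{V_0}+\alpha_{W_0} \ge \alpha_{V_0} + \alpha_{W_0}^- \ge \alpha_{V_0} + \tfrac{N}{N-1}\,\alpha_{W_0}^-$ upgrades this to $C_{\msf{LSI}}(\pi) \le \tfrac{1}{2}\,\overline C_{\msf{LSI}}$.

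\textbf{LSI for $\mu^{1:N}$.} This is the key obstacle. Naive Holley--Stroock applied to the joint potential on $\R^{dN}$ fails, because the bounded single-body term $\tfrac{2}{\sigma^2}\sum_i V_1(x^i)$ and the bounded interaction term $\tfrac{1}{\sigma^2(N-1)}\sum_{i\ne j} W_1(x^i-x^j)$ contribute joint oscillation of order $N\,(\osc(V_1)+\osc(W_1))$, which would ruin uniformity in $N$. The plan is instead to route through the proximal Gibbs device: for any $\mu \in \Pac$, the \emph{pairwise proximal Gibbs} measure
\[
   \pi_\mu(x) \propto \exp\bigl(-\tfrac{2}{\sigma^2}\,V(x) - \tfrac{2}{\sigma^2} \textint W(x-y)\,\mu(\D y)\bigr)
\]
is a single-body density, so the \emph{same} Holley--Stroock argument as above (with $\pi$ replaced by the arbitrary $\mu$) gives $C_{\msf{LSI}}(\pi_\mu) \le \tfrac{1}{2}\,\overline C_{\msf{LSI}}$ uniformly in $\mu$. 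I would then invoke the general transfer result (the pairwise analog of Lemma~\ref{lem:uniform_lsi}), which bootstraps a uniform proximal-Gibbs LSI into an LSI for $\mu^{1:N}$ provided the interaction is weak; the hypothesis $\sigma^2/(\beta_W\,\overline C_{\msf{LSI}}) \ge \sqrt{6}$ in Assumption~\ref{as:bdd_perturb_VW} is precisely what makes the transfer succeed with an $N$-independent constant, and the extra factor of $2$ (compared to the $\pi$ bound) is absorbed here.

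\textbf{Assumption~\ref{as:weak_interaction}.} Finally, using $C_{\msf{LSI}}(\pi) \le \tfrac{1}{2}\,\overline C_{\msf{LSI}}$,
\[
   \rho = \frac{\sigma^4}{8\,\beta_W^2\,C_{\msf{LSI}}^2(\pi)} \ge \frac{\sigma^4}{2\,\beta_W^2\,\overline C_{\msf{LSI}}^2} = \tfrac{1}{2}\,\Bigl(\frac{\sigma^2}{\beta_W\,\overline C_{\msf{LSI}}}\Bigr)^{2} \ge \tfrac{1}{2}\cdot 6 = 3\,.
\]
The hard part is thus isolated to the transfer step in the $\mu^{1:N}$ bound; everything else is bookkeeping around Bakry--\'Emery and Holley--Stroock.
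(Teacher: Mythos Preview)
Your treatment of $\pi$ and of Assumption~\ref{as:weak_interaction} is correct and matches the paper. The gap is in the $\mu^{1:N}$ step. You invoke ``the pairwise analog of Lemma~\ref{lem:uniform_lsi}'' as a transfer from uniform proximal-Gibbs LSI to an LSI for $\mu^{1:N}$, but Lemma~\ref{lem:uniform_lsi} does not do this: it is only the statement that Assumption~\ref{as:functional_bdd_grad} implies uniform LSI for the proximal Gibbs measures themselves, which is essentially what you have already proved. The only result in the paper that transfers proximal-Gibbs LSI directly to an LSI for $\mu^{1:N}$ is Corollary~\ref{cor:unif_LSI_crazy}, and that gives doubly exponential constants, not $\overline C_{\msf{LSI}}$.

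The correct tool is the Otto--Reznikoff criterion (Lemma~\ref{lem:otto-rez}), and it requires an extra ingredient you have not supplied. One observes that the conditional $\mu^{i\mid -i}(\cdot\mid x^{-i})$ is exactly a proximal Gibbs measure, so your Holley--Stroock bound gives $C_{\msf{LSI}}(\mu^{i\mid -i}) \le \tfrac12\,\overline C_{\msf{LSI}}$ uniformly, i.e.\ $\tau_i \ge \tau \deq 2/\overline C_{\msf{LSI}}$. But Otto--Reznikoff also needs the cross-derivative bound $\norm{\nabla_{x^i}\nabla_{x^j}\log\mu^{1:N}}_{\msf{op}} \le \frac{2\beta_W}{\sigma^2(N-1)}$, and then the spectral condition on the circulant matrix $A$ (diagonal $\tau$, off-diagonal $-\frac{2\beta_W}{\sigma^2(N-1)}$). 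The weak-interaction hypothesis $\sigma^2/(\beta_W\,\overline C_{\msf{LSI}}) \ge \sqrt 6$ is what ensures the minimum eigenvalue of $A$ is at least $\tau/2 = 1/\overline C_{\msf{LSI}}$, yielding $C_{\msf{LSI}}(\mu^{1:N}) \le \overline C_{\msf{LSI}}$. So your intuition that the weak-interaction bound ``makes the transfer succeed'' is right, but you need to name the mechanism (Otto--Reznikoff) and supply the missing cross-Hessian estimate; without them the argument is incomplete.
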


\subsubsection{General McKean{--}Vlasov Setting}

In the setting~\eqref{eq:total_energy} with $\mc F(\mu) = \mc F_0(\mu) + \frac{\lambda}{2} \int \norm{\cdot}^2 \, \D \mu$, we verify that Assumption~\ref{as:functional_bdd_grad}
yields~\eqref{eq:lsi} for $\pi$. See Corollary~\ref{cor:unif_LSI_crazy} for a more precise statement.

\begin{lemma}[Informal]\label{lem:lsiN_informal}
    In the mean-field Langevin setting of \S\ref{sec:mfl}, suppose that Assumption~\ref{as:functional_bdd_grad} holds.
    Then, Assumption~\ref{as:lsi_N} holds with $C_{\msf{LSI}}(\mu^{1:N})$ depending on $d$, $B$, $\beta$, $\lambda$, and $\sigma$, but not on $N$.
\end{lemma}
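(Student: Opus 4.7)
The plan is to verify \eqref{eq:lsi} for the $N$-particle density
\[
    \mu^{1:N}(x^{1:N}) \propto \exp\bigl(-U^N(x^{1:N})\bigr), \qquad U^N(x^{1:N}) \deq \frac{2N}{\sigma^2}\,\mc F_0(\rho_{x^{1:N}}) + \frac{\lambda}{\sigma^2}\,\norm{x^{1:N}}^2,
\]
with a constant independent of $N$. I would first split $U^N$ into the quadratic piece $\frac{\lambda}{\sigma^2}\norm{x^{1:N}}^2$, whose induced Gaussian reference $\gamma^{\otimes N}$ satisfies \eqref{eq:lsi} with the $N$-independent constant $\sigma^2/(2\lambda)$ by tensorization, and the interaction piece $V^N \deq \frac{2N}{\sigma^2}\,\mc F_0(\rho_{x^{1:N}})$, so that $\mu^{1:N}$ is the tilt $\gamma^{\otimes N}\cdot e^{-V^N}/Z$. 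The task is then to transfer the LSI of the reference through this tilt without picking up $N$-dependence.

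The key structural observation is that, by the identity $\nabla_{x^i}\mc F_0(\rho_{x^{1:N}}) = \tfrac{1}{N}\nabla_{\mc W_2}\mc F_0(\rho_{x^{1:N}}, x^i)$ and Assumption~\ref{as:functional_bdd_grad}, each block gradient of $V^N$ satisfies $\norm{\nabla_{x^i}V^N} \le 2B/\sigma^2$ uniformly in $i$ and $N$; only the Euclidean gradient on $\R^{dN}$, obtained by summing squared block norms, scales as $\sqrt{N}$. Moreover, under Assumption~\ref{as:functional_smoothness} and the fact that moving one particle by $\epsilon$ changes $\rho_{x^{1:N}}$ by at most $\epsilon/N$ in $\mc W_1$, the off-diagonal Hessian blocks satisfy $\norm{\partial_{x^j}\partial_{x^i}V^N} \le 2\beta/(\sigma^2 N)$ for $i\ne j$. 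Hence $\sum_{j\ne i}\norm{\partial_{x^j}\partial_{x^i}V^N} \le 2\beta/\sigma^2$ is itself $N$-independent.

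With these estimates in hand, I would proceed in two steps. Step one: for each $i$, the conditional density of $x^i$ given $x^{-i}$ under $\mu^{1:N}$ is (up to normalization) the $i$-th Gaussian marginal tilted by a function of $x^i$ whose gradient is bounded by $2B/\sigma^2$; this is precisely the analytic structure of a proximal Gibbs measure, so Lemma~\ref{lem:prox_gibbs_lsi_informal} (or a direct repetition of its proof) yields a uniform conditional LSI with constant depending on $d$, $B$, $\lambda$, $\sigma$ only. Step two: to promote these conditional LSIs to a joint LSI on $\R^{dN}$, I would invoke a Zegarlinski/Otto--Reznikoff criterion, which asserts that uniform conditional LSIs together with a summable bound on the off-diagonal Hessian blocks of the interaction potential yield a joint LSI whose constant does not depend on $N$. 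The off-diagonal estimate above is exactly the Dobrushin-type input required.

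The main obstacle is precisely this $N$-independence: naively, the Euclidean Lipschitz constant of $V^N$ on $\R^{dN}$ scales as $\sqrt{N}$, so any black-box Lipschitz-perturbation of LSI applied to $\gamma^{\otimes N}$ would yield a constant deteriorating in $N$ (typically exponentially). Bypassing this requires exploiting the symmetric, empirical-measure structure of $V^N$ through the per-block bounds above, which is the technically delicate step. A complementary worry is that when $\lambda$ is small relative to $\beta$, the potential $U^N$ need not be globally strongly convex (so Bakry--\'Emery does not apply directly), and one must genuinely use the conditional-plus-weak-interaction structure rather than reduce to the log-concave case; careful tracking of dimensional factors in the conditional LSI constant of step one (inherited from Lemma~\ref{lem:prox_gibbs_lsi_informal}) is what ultimately introduces the $d$-dependence seen in the final bound.
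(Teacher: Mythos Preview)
Your approach is natural and is precisely what the paper uses in the \emph{pairwise} setting (Lemma~\ref{lem:bdd_perturb_meanfield}), but it does not go through here without an extra hypothesis. The Otto--Reznikoff criterion (Lemma~\ref{lem:otto-rez}) requires the matrix $A$ with $A_{i,i}=\tau$ and $A_{i,j}=-\beta_{i,j}$ to be uniformly positive definite. With your estimates $\tau_i\equiv\tau$ and $\beta_{i,j}\le 2\beta/(\sigma^2 N)$, one computes $A=(\tau+\tfrac{2\beta}{\sigma^2 N})I_N-\tfrac{2\beta}{\sigma^2 N}\,1_N1_N^\T$, whose minimum eigenvalue is $\tau-\tfrac{2\beta(N-1)}{\sigma^2 N}$. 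Thus you need $\tau>2\beta/\sigma^2$, i.e.\ the conditional LSI constant must be strictly smaller than $\sigma^2/(2\beta)$. But the Lipschitz-perturbation bound you cite for the conditionals gives only $1/\tau\le \tfrac{\sigma^2}{2\lambda}\exp(\,2B^2/(\lambda\sigma^2)+\cdots)$, so the required inequality becomes roughly $\lambda>\beta\exp(2B^2/(\lambda\sigma^2))$---a weak-interaction condition that is \emph{not} among the stated hypotheses. The ``summable off-diagonal'' observation, while correct, is not enough: Otto--Reznikoff/Zegarlinski needs the row sum to be dominated by the diagonal, not merely bounded.

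The paper avoids this obstruction by a genuinely different route. It does not pass through conditional LSIs at all; instead, it uses the Kim--Milman reverse heat-flow transport (Lemma~\ref{lem:lip_transport}), reducing the problem to a \emph{tilt-stability} estimate: uniform control of $\norm{\cov_{\mu_{t,y^{1:N}}}}_{\msf{op}}$ for all Gaussian tilts $\mu_{t,y^{1:N}}$ of $\mu^{1:N}$. The key idea (Lemma~\ref{lem:tilt_stab}) is that each such tilt is itself the stationary measure of a mean-field system with inhomogeneous confinements, so its covariance can be compared to that of a product measure via a \emph{generalized propagation of chaos} argument (Theorem~\ref{thm:generalized_poc}). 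This is where the $N$-independence comes from---it is borrowed from propagation of chaos rather than from a Dobrushin condition---and it explains why the resulting constant is doubly exponential in the problem parameters but free of $N$.
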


Obtaining Lemma~\ref{lem:lsiN_informal} is not straightforward, and we rely on a novel argument combining heat flow estimates from~\cite{BriPed24HeatFlow} with a generalized version of the propagation of chaos result in Theorem~\ref{thm:entropic_prop_chaos_general_functional}; see \S\ref{scn:functional_lsi} for details.

\section{Sampling from the Mean-Field Target}\label{scn:main_results}

In this section, we present results for sampling from $\pi$. As outlined in Algorithm~\ref{alg:particle-sampler}, we use off-the-shelf log-concave samplers to sample from $\mu^{1:N}$, during which we access the first-order\footnote{For our results involving the proximal sampler, we also assume access to a proximal oracle for simplicity.} oracle for $\mu^{1:N}$ (i.e. an oracle for evaluation of $\log \mu^{1:N}$ up to an additive constant, and for evaluation of $\nabla \log \mu^{1:N}$).
For $N$ sufficiently large, the first particle given by Algorithm~\ref{alg:particle-sampler} is approximately distributed according to $\pi$: for $\hat \mu^{1:N}$ the law of the output of the log-concave sampler and its $1$-particle marginal distribution $\hat \mu^{1}$,
\[
    \mc W_2(\hat \mu^{1}, \pi) 
    \le \mc W_2(\hat \mu^{1}, \mu^{1}) + \mc W_2(\mu^{1}, \pi)
    \le \sqrt{\frac 1 N}\, \mc W_2(\hat \mu^{1:N}, \mu^{1:N}) + \mc W_2(\mu^{1}, \pi)\,,
\]
where the inequality follows from exchangeability (Lemma~\ref{lem:wass_exchangeable}).
A similar decomposition also holds for $\KL$, although the argument is more technical. We defer its presentation to \S\ref{app:sampling}.

\begin{algorithm}[h]
\hspace*{\algorithmicindent} \textbf{Input:} the number $N$ of total particles, a log-concave sampler $\msf{LC{\text{-}}Sampler}$\\
\hspace*{\algorithmicindent} \textbf{Output:} $k$ particles $\hat X^{1:k}$
\begin{algorithmic}[1]
\caption{Sampling from the Mean-Field Stationary Distribution}\label{alg:particle-sampler}
\STATE Sample $\hat X^{1:N} \sim \hat \mu^{1:N}$ via $\msf{LC{\text{-}}Sampler}$, so that $\hat \mu^{1:N} \approx \mu^{1:N}$, e.g., in $\mc W_2$ or $\sqrt{\KL}$.
\STATE Output the first $k$ particles $\hat X^{1:k}$.
\end{algorithmic}
\end{algorithm}

To bound the second term by $\epsilon$, it suffices to choose $N$ according to the propagation of chaos results in \S\ref{scn:bias_control}.
Our results are summarized in Table~\ref{tab:rates_classical}, in which we record \textbf{the total number of oracle calls $M$ for $\mu^{1:N}$ made by the sampler $M$} and \textbf{the number of particles $N$} needed to achieve $\varepsilon$ error in the desired metric, hiding polylogarithmic factors.
Note that in the pairwise McKean{--}Vlasov setting, each oracle call to $\mu^{1:N}$ requires $N$ calls to an oracle for $V$, and $\binom{N}{2}$ calls to an oracle for $W$.

\begin{table}
    \centering
    \begin{tabular}{c|c|c|c|c}
         \hline {\textbf{Algorithm}} & {\textbf{``Metric"}} & {\textbf{Assumptions}} & $M$ & $N$  \\
         \hline LMC & \multirow{3}{*}{$\nicefrac{\sqrt\alpha}{\sigma}\,\mc W_2$} & \multirow{3}{*}{\ref{as:smoothness},~\ref{as:pi_lsi},~\ref{as:weak_interaction},~\ref{as:lsi_N}} & ${\kappa^2 d}/{\varepsilon^2}$ & \multirow{3}{*}{${d^{1/2}}/{\varepsilon}$} \\
          MALA--PS &&& $\kappa d^{3/4}/\varepsilon^{1/2}$ & \\
          ULMC--PS &&& ${\kappa^{3/2} d^{1/2}}/{\epsilon}$ & \\
          \hline ULMC${}^+$ & $\nicefrac{\sqrt\alpha}{\sigma}\,\mc W_2$ & \ref{as:smoothness},~\ref{as:weak_interaction},~\ref{as:str_cvx_VW} & ${\kappa d^{1/3}}/{\varepsilon^{2/3}}$ & ${d^{1/2}}/{\varepsilon}$ \\
          \hline LMC & \multirow{2}{*}{$\sqrt{\KL}$} &  \multirow{4}{*}{\ref{as:smoothness},~\ref{as:str_cvx_VW}} & ${\kappa^2 d}/{\varepsilon^2}$ &\multirow{2}{*}{${\kappa^4 d}/{\varepsilon^2}$} \\
           ULMC && & ${\kappa^{3/2} d^{1/2}}/{\varepsilon}$ & \\
           \cline{1-2}\cline{4-5} LMC & \multirow{2}{*}{$\nicefrac{\sqrt\alpha}{\sigma}\,\mc W_2$} & &${\kappa d}/{\varepsilon^2}$ & \multirow{2}{*}{${\kappa^2 d}/{\varepsilon^2}$} \\
           ULMC${}^+$ && & ${\kappa d^{1/3}}/{\varepsilon^{2/3}}$ &  \\
           \hline LMC & \multirow{2}{*}{$\sqrt{\KL}$} &  \multirow{2}{*}{\ref{as:functional_convexity},~\ref{as:functional_smoothness},~\ref{as:prox_gibbs_lsi},~\ref{as:lsi_N}} & ${\kappa^2 d}/{\varepsilon^2} $& \multirow{2}{*}{${\kappa d}/{\varepsilon^2}$} \\
           ULMC{--}PS && & $\kappa^{3/2} d^{1/2}/\varepsilon$ &
    \end{tabular}
    \caption{\small In this table, we record $M$, the total number of oracle queries to $\log \mu^{1:N}$ made by the log-concave sampler, and $N$, the number of particles.}\label{tab:rates_classical}
\end{table} 
The algorithms in the table refer to: Langevin Monte Carlo (LMC); underdamped Langevin Monte Carlo (ULMC); discretizations of the underdamped Langevin diffusion via the randomized midpoint method~\cite{shen2019randomized} or the shifted ODE method~\cite{FosLyoObe21ShiftedODE} (ULMC${}^+$); and implementation of the proximal sampler~\cite{LeeSheTia21RGO, chen22improved} via the Metropolis-adjusted Langevin algorithm or via ULMC (MALA{--}PS and ULMC{--}PS respectively).
Note that LMC applied to sample from $\mu^{1:N}$ is simply the Euler{--}Maruyama discretization of~\eqref{eq:finite_particle_interactive}, and likewise ULMC is the algorithm considered in~\cite{fu2023mean}.
We refer to \S\ref{app:sampling} for proofs and references.

To streamline the rates, we simplify the notation by defining $\beta = \beta_V + \beta_W$ if Assumption~\ref{as:smoothness} holds, otherwise we use the value from Assumption~\ref{as:functional_smoothness}. 
We let $\alpha = \alpha_V + {\alpha_W^-}$ under Assumption~\ref{as:str_cvx_VW}, $\alpha = \nicefrac{\sigma^2}{2 \max\{C_{\msf{LSI}}(\mu^{1:N}), C_{\msf{LSI}}(\pi)\}}$ under Assumptions~\ref{as:pi_lsi} and~\ref{as:lsi_N}, and $\alpha = \nicefrac{\sigma^2}{2\max\{C_{\msf{LSI}}(\mu^{1:N}), \overline C_{\msf{LSI}}\}}$ in the general McKean{--}Vlasov setting.

Finally, we let $\kappa \deq \beta/\alpha$ denote the condition number. We briefly justify this terminology. If the target and all proximal Gibbs measures are strongly convex with parameter $\alpha/\sigma^2$, then the Bakry--\'Emery condition implies that $\overline{C}_{\msf{LSI}} \le \tfrac{\sigma^2}{\alpha}$.
Hence, the scale-invariant ratio $\overline C_{\msf{LSI}} \beta/\sigma^2$ reduces to the classical condition number $\beta/\alpha$, the ratio of the largest to smallest eigenvalues of the Hessian matrices for $V$ and $W$.
Therefore, $\overline C_{\msf{LSI}} \beta/\sigma^2$ is a generalization of the condition number to settings beyond uniform strong convexity which allows us to state more interpretable bounds. The additional assumption $\kappa \leq \sqrt{d}/\epsilon$ will be used to simplify some of the rates.

In the following subsections, we discuss some of the results in greater detail.

\subsection{Pairwise McKean{--}Vlasov Setting}

\begin{example}[Gaussian Case]\label{eg:Gaussian}
    Consider a quadratic confinement and interaction,
    \[
        V(x) = \frac{1}{2}\, x^\T A x=\frac{1}{2}\,\norm{x}_A^2\,,    \qquad W(x) = \frac{\lambda}{2}\,\norm x^2\,,
    \]
    for some matrix $A \in \R^{d \times d}$ with $A \succ 0$, $\lambda \geq 0$. 
    The resulting stationary distributions can be calculated explicitly to be Gaussians.
    We show in \S\ref{sec:cal-Gaussian} that for large $N$, 
    $
    \KL(\mu^{1:k}\mmid \pi^{\otimes k}) = \widetilde{\Theta} (\nicefrac{dk^2}{N^2}\,)$.
    This shows that the rate in Theorem~\ref{thm:N_choice_bias} is sharp.
\end{example}

\begin{example}[Strongly Convex Case]
    Consider the strongly convex case where $\alpha = \alpha_V + {\alpha_W^-} > 0$.
    The prior work~\cite{bou2023nonlinear} also considered the problem of sampling from the mean-field stationary distribution $\pi$, with $\sigma^2 = 2$. If we count the number of calls to a gradient oracle for $V$, their complexity bound reads $\widetilde{\mc O}(\nicefrac{\kappa^{5/3} d^{4/3}}{\varepsilon^{8/3}})$ to achieve $\nicefrac{\sqrt \alpha}{\sigma}\,\mc W_1(\hat\mu^1, \pi) \le \varepsilon$.
    We note that their assumptions are not strictly comparable to ours. They require the interaction $W$ to be sufficiently weak, in the sense that $\beta_W \lesssim \alpha$, which is similar\footnote{See eq.\ (2.24) therein; note that they have a scaling factor of $\epsilon$ in front of their interaction term, so that our parameter $\beta_W$ is equivalent to their $\varepsilon \tilde L$.} to our Assumption~\ref{as:weak_interaction}; on the other hand, they only assume $\alpha_V > 0$, rather than $\alpha_V + {\alpha_W^-} > 0$.
    Nevertheless, we attempt to make some comparisons with their work below.

    \textbf{Without} Assumption~\ref{as:weak_interaction}, ULMC${}^+$ achieves $\nicefrac{\sqrt\alpha}{\sigma}\,\mc W_2(\hat\mu^1,\pi) \le \varepsilon$ with complexity $\widetilde{\mc O}(\nicefrac{\kappa^3 d^{4/3}}{\varepsilon^{8/3}})$, which matches the guarantee of~\cite{bou2023nonlinear} up to the dependence on $\kappa$.
    We can also obtain guarantees in $\sqrt{\KL}$, at the cost of an extra factor of $\kappa^2$.

    \textbf{With} Assumption~\ref{as:weak_interaction}, MALA{--}PS has complexity $\widetilde{\mc O}(\nicefrac{\kappa d^{5/4}}{\varepsilon^{3/2}})$ and ULMC${}^+$ has complexity $\widetilde{\mc O}(\nicefrac{\kappa d^{5/6}}{\varepsilon^{5/3}})$, which improve substantially upon~\cite{bou2023nonlinear}.

    To summarize, in the strongly convex case, we have obtained numerous improvements: (i) we can obtain results even without the weak interaction condition (Assumption~\ref{as:weak_interaction}); (ii) when we assume the weak interaction condition, we obtain improved complexities; (iii) our results hold in stronger metrics; (iv) our approach is generic, allowing for the consideration of numerous different samplers without needing to establish new propagation of chaos results (by way of comparison,~\cite{bou2023nonlinear} developed a tailored propagation of chaos argument for their non-linear Hamiltonian Monte Carlo algorithm).
\end{example}

\begin{example}[Bounded Perturbations]
    Both the results of~\cite{bou2023nonlinear} as well as our own allow for non-convex potentials, albeit under different assumptions---\cite{bou2023nonlinear} require strong convexity at infinity, whereas we require~\eqref{eq:lsi} for the stationary measures $\mu^{1:N}$ and $\pi$.
    In order to obtain sampling guarantees with low complexity, it is important for the LSI constant of $\mu^{1:N}$ to be independent of $N$.
    We have provided a sufficient condition for this to hold: $V$ and $W$ are bounded perturbations of $V_0$ and $W_0$ respectively, where $\alpha_{V_0} + {\alpha_{W_0}^-} > 0$; see Lemma~\ref{lem:bdd_perturb_meanfield}.

    We also note that in this setting, both of our works require a weak interaction condition. This is in general necessary in order to ensure uniqueness of the mean-field stationary distribution, see the discussion in \S\ref{sec:mkv_chaos}.
\end{example}

\subsection{General McKean{--}Vlasov Setting}\label{scn:nn_applications}

\begin{example}[General Functionals]\label{ex:mfl}
    In the general setting, under Assumptions~\ref{as:functional_convexity},~\ref{as:functional_smoothness}, and~\ref{as:prox_gibbs_lsi}, the work of~\cite{suzuki2023convergence} provided the first discretization bounds. They impose further assumptions and their resulting complexity bound is rather complicated, but it reads roughly $MN = \widetilde{\mc O}(\nicefrac{\poly(\kappa)\, d^2}{\varepsilon^4})$ for the discretization of~\eqref{eq:finite_particle}.
    Subsequently,~\cite{fu2023mean} obtained an improved complexity of $MN = \widetilde{\mc O}(\nicefrac{\kappa^4 d^{3/2}}{\varepsilon^3})$ via ULMC in the averaged $\msf{TV}$ distance.
    In comparison, we can improve this complexity guarantee to $\widetilde{\mc O}(\nicefrac{\kappa^{5/2} d^{3/2}}{\varepsilon^3})$, and the guarantee even holds in $\sqrt{\KL}$ if we combine ULMC with the proximal sampler.
    It appears that we gain one factor of $\sqrt\kappa$ through sharper discretization analysis (via~\cite{zhang2023improved}, or via the error analysis of the proximal sampler in~\cite{altschuler2023faster}), and one factor of $\kappa$ via a sharper propagation of chaos result (Theorem~\ref{thm:entropic_prop_chaos_general_functional}).

    We also note that the result of~\cite{fu2023mean} is based on a kinetic version of the propagation of chaos argument from~\cite{Chen+24PoCKinetic}, whereas our approach uses the original ``non-kinetic'' argument from~\cite{chen2022uniform} in the form of Theorem~\ref{thm:entropic_prop_chaos_general_functional}.
\end{example}

\paragraph*{Application to Two-Layer Neural Networks.}
Let us consider the problem of learning a two-layer neural network in the mean-field regime. 
Let $f_\theta : \R^d\to\R$ be a function parameterized by $\theta \in \R^p$, and for any probability measure $\mu$ over $\R^p$, let $f_\mu \deq \int f_\theta \,\mu(\D\theta)$.
For example, in a standard two-layer neural network, we take $\theta = (a, w) \in \R\times \R^d$ and $f_{a,w}(x) = a\,\msf{ReLU}(\langle w,x\rangle)$.
When $\mu = \frac{1}{m} \sum_{j=1}^m \delta_{(a_j, w_j)}$ is an empirical measure, then $f_\mu$ is the function computed by a two-layer neural network with $m$ hidden neurons.
In this formulation, however, we can take $\mu$ to be any probability measure, corresponding to the \emph{mean-field limit} $m\to\infty$~\cite{chizat2018global, mei2018mean, Chi22MFL, RotVan22NNInteracting, sirignano2020mean}.

Given a dataset $\{(x_i, y_i)\}_{i=1}^n$ in $\R^d\times\R$ and a loss function $\ell : \R\times\R\to\R$, we can formulate neural network training as the problem of minimizing the loss $\mu \mapsto \sum_{i=1}^n \ell(f_\mu(x_i), y_i)$.
To place this within the general McKean{--}Vlasov framework, we add two regularization terms: 
(1) $\frac{\lambda}{2}\int \norm\cdot^2\,\D\mu$ 
corresponds to weight decay; 
and (2) $\frac{\sigma^2}{2}\int \log\mu \, \D \mu$ is entropic regularization. 
We are now in the setting of \S\ref{sec:mfl}, with $\mc F_0(\mu) = \sum_{i=1}^n \ell(f_\mu(x_i), y_i)$.

To minimize this energy, it is natural to consider the Euler--Maruyama discretization of~\eqref{eq:finite_particle}, which corresponds to learning the neural network via noisy GD\@, and was considered in~\cite{suzuki2023convergence}. Recent works \cite{fu2023mean, Chen+24PoCKinetic} also considered the underdamped version of \eqref{eq:mean_field} and its discretization.
Under the assumptions common to those works as well as our own, our results yield improved algorithmic guarantees for this task (see Example~\ref{ex:mfl}).

Unfortunately, the assumptions used for the analysis of the general McKean{--}Vlasov are restrictive and limit the applicability to neural network training.
For example, it suffices for $\ell$ to be convex in its first argument (to satisfy Assumption~\ref{as:functional_convexity}), to have two bounded derivatives (w.r.t.\ its first argument), and for $\theta \mapsto f_\theta(x_i)$ to have two bounded derivatives for each $x_i$.
The last condition is satisfied, e.g., for $f_\theta(x) = \tanh(\langle \theta, x\rangle)$.
For a genuinely two-layer example, we can take $f_\theta(x) = \tanh(a)\tanh(\langle w,x\rangle)$ for $\theta = (a,w) \in \R\times\R^d$.
Under these conditions, Assumptions~\ref{as:functional_smoothness} and~\ref{as:functional_bdd_grad} hold, which in turn furnish log-Sobolev inequalities via Lemmas~\ref{lem:prox_gibbs_lsi_informal} and~\ref{lem:lsiN_informal}.

\paragraph*{Limitations.} However, we note that there is a substantial limitation of our framework when applied to the mean-field Langevin dynamics.
Although we are able to establish a uniform-in-$N$ LSI for the stationary distribution $\mu^{1:N}$ under appropriate assumptions (see Corollary~\ref{cor:unif_LSI_crazy} for a precise statement), the dependence of the LSI constant scales poorly (in fact, doubly exponentially) in the problem parameters.
To fully benefit from the modularity of our approach, it is desirable to obtain a uniform-in-$N$ LSI with better scaling, and we leave this question open for future research.

\section{Conclusion}

In this work, we propose a framework for obtaining sampling guarantees for the minimizers of~\eqref{eq:total_energy_with_interaction} and~\eqref{eq:total_energy}, based on decoupling the problem into \emph{(i)} particle approximation via \emph{propagation of chaos}, and \emph{(ii)} time-discretization via \emph{log-concave sampling theory}. Our approach leads to simpler proofs and improved guarantees compared to previous works, and our results readily benefit from any improvements in either \emph{(i)} or \emph{(ii)}.

We conclude by listing some future directions of study.
As discussed in \S\ref{scn:nn_applications}, our uniform-in-$N$ LSI for the mean-field Langevin dynamics currently scales poorly in the problem parameters, and it is important to improve it.
We also believe there is further room for improvement in the propagation of chaos results. For example, can the sharp rate in Theorem~\ref{thm:N_choice_bias} be extended to stronger metrics such as R\'enyi divergences, as well as to situations when the weak interaction condition (Assumption~\ref{as:weak_interaction}) fails, e.g., in the strongly displacement convex case or in the setting of \S\ref{sec:mfl_chaos}?
For the sampling guarantees, the prior works~\cite{bou2023nonlinear, suzuki2023convergence} considered different settings, such as potentials satisfying convexity at infinity or the use of stochastic gradients; these extensions are compatible with our approach and could possibly lead to improvements in these cases, as well as others. Finally, consider the case where $\int \nabla W(X_t - \cdot) \, \D \pi_t$ in~\eqref{eq:mean_field_interactive} is replaced with a generic function $\int b_t(X_t, \cdot) \, \D \pi_t$, $b_t: \R^n \times \R^n \to \R^n$. It would be interesting to extend our analysis to this setting, as it arises in many applications~\cite{arnaudon2020second}.

\section*{Acknowledgements}
We thank Zhenjie Ren for pointing out an error in a previous draft of this paper, and Daniel Lacker, Atsushi Nitanda, and Taiji Suzuki for important discussions and references.
YK was supported in part by NSF awards CCF-2007443 and CCF-2134105.
MSZ was supported by NSERC through the CGS-D program. 
SC was supported by the Eric and Wendy
Schmidt Fund at the Institute for Advanced Study.
MAE was supported by NSERC Grant [2019-06167] and
CIFAR AI Chairs program at the Vector Institute.
MBL was supported by NSF grant DMS-2133806.

\newpage

\printbibliography

\newpage
\appendix

\section{Control of the Finite-Particle Error}\label{app:bias_control}

In this section, we prove the results in \S\ref{scn:bias_control} on the finite-particle error. We will make extensive use of the following transport inequality, which arises as a consequence of~\eqref{eq:lsi}.
\begin{lemma}[{Talagrand's Transport Inequality, \cite{otto2000generalization}}]
    If a measure $\pi$ satisfies \eqref{eq:lsi} with constant $C_{\msf{LSI}}$, then for all measures $\mu\in \Pac$,
    \begin{align}\label{eq:talagrand}\tag{TI}
        \mc W_2^2(\mu, \pi) \leq 2 C_{\msf{LSI}}\, \msf{KL}(\mu \mmid \pi)\,.
    \end{align}
\end{lemma}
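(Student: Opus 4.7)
The plan is to follow the classical Otto--Villani argument~\cite{otto2000generalization}, which deduces the transport inequality from the LSI by analyzing the Wasserstein gradient flow of $\KL(\,\cdot\mmid \pi)$, i.e., the Langevin semigroup with invariant measure $\pi$. First, we may assume $\KL(\mu\mmid\pi)<\infty$, else there is nothing to prove, and by standard approximation we may also assume $\mu$ is sufficiently regular so that the flow $(\mu_t)_{t\ge 0}$ starting from $\mu_0=\mu$ exists and converges to $\pi$.

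Second, along this flow the fundamental \emph{de Bruijn identity} gives the dissipation
\[
   \frac{\D}{\D t}\,\KL(\mu_t\mmid\pi) \;=\; -\FI(\mu_t\mmid\pi)\,,
\]
while, since the curve $t\mapsto \mu_t$ is an absolutely continuous Wasserstein curve whose tangent velocity field is $v_t=-\nabla\log(\mu_t/\pi)$, its metric speed is exactly $\lVert v_t\rVert_{L^2(\mu_t)}=\sqrt{\FI(\mu_t\mmid\pi)}$. Applying the LSI (Definition~\ref{def:lsi}) yields $\sqrt{\FI(\mu_t\mmid\pi)}\ge \sqrt{2/C_{\msf{LSI}}}\,\sqrt{\KL(\mu_t\mmid\pi)}$, and a short calculation shows
\[
   \frac{\D}{\D t}\sqrt{\KL(\mu_t\mmid\pi)} \;=\; -\,\frac{\FI(\mu_t\mmid\pi)}{2\,\sqrt{\KL(\mu_t\mmid\pi)}} \;\le\; -\,\frac{1}{\sqrt{2C_{\msf{LSI}}}}\,\sqrt{\FI(\mu_t\mmid\pi)}\,.
\]

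Third, integrating this inequality from $0$ to $\infty$ (and using $\KL(\mu_t\mmid\pi)\to 0$, which itself follows from LSI via Gr\"onwall) gives
\[
   \int_0^\infty \sqrt{\FI(\mu_t\mmid\pi)}\,\D t \;\le\; \sqrt{2C_{\msf{LSI}}\,\KL(\mu\mmid\pi)}\,.
\]
Finally, the length of the Wasserstein curve connecting $\mu$ to $\pi$ is at least $\mc W_2(\mu,\pi)$, so
\[
   \mc W_2(\mu,\pi)\;\le\;\int_0^\infty \lVert v_t\rVert_{L^2(\mu_t)}\,\D t\;=\;\int_0^\infty \sqrt{\FI(\mu_t\mmid\pi)}\,\D t\;\le\;\sqrt{2C_{\msf{LSI}}\,\KL(\mu\mmid\pi)}\,,
\]
and squaring yields the desired inequality.

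The main technical obstacle is the rigorous justification of the Otto-calculus identities above: existence, smoothness, and convergence of the gradient flow $(\mu_t)$ for a general $\mu\in\Pac$, and the identification of its metric derivative with $\sqrt{\FI}$. These points are standard and can be handled by a regularization-and-approximation argument using, e.g., the heat-flow regularization and the theory in~\cite{AGS}; alternatively one may invoke the Hamilton--Jacobi / Hopf--Lax approach of Bobkov--Gentil--Ledoux which bypasses the gradient-flow machinery. Either route delivers the inequality with the stated sharp constant $2C_{\msf{LSI}}$.
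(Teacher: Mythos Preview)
Your proposal is correct and reproduces the classical Otto--Villani gradient-flow argument from the very reference the paper cites. Note, however, that the paper itself does not give a proof of this lemma at all: it simply states the result and attributes it to~\cite{otto2000generalization}, so there is no in-paper proof to compare against; your write-up is a faithful sketch of the original source argument.
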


\subsection{LSI Case}\label{scn:Entropic-POC-LSI}

We provide the proof of Theorem~\ref{thm:N_choice_bias} under the assumption of~\eqref{eq:lsi} for the invariant measures of $\eqref{eq:mean_field_interactive}$ and $\eqref{eq:finite_particle_interactive}$.
This relies on a BBGKY hierarchy based on the arguments of~\cite{lacker2023sharp}.

Recall that $\mu^{1:k}$ is the $k$-particle distribution of the finite-particle system.
Explicitly,
\begin{align*}
    \log \mu^{1:k}(x^{1:k})
    &= \log \int \exp\Bigl(- \frac{2}{\sigma^2} \sum_{i=1}^N V(x^i) - \frac{1}{\sigma^2\,(N-1)} \sum_{\substack{i,j=1 \\
     i\neq j}}^N W(x^i - x^j) \Bigr) \, \D x^{k+1:N} + \text{const.}
\end{align*}
Using exchangeability, we can then compute the gradient of the potential for this measure as
\begin{align*}
    &-\frac{\sigma^2}2\, \nabla_{x^i}\log \mu^{1:k}(x^{1:k}) \\
    &\qquad = \nabla V(x^i) + \frac{1}{N-1} \sum_{\substack{j=1 \\ i \neq j}}^k \nabla W(x^i - x^j)
    + \frac{N-k}{N-1} \E_{\mu^{k+1|1:k}(\cdot \mid x^{1:k})} \nabla W(x^i - \cdot)\,.
\end{align*}
Let $X^{1:k} \sim \mu^{1:k}$ and introduce the notation
\begin{align*}
    \msf{K}_k \defeq \msf{KL}(\mu^{1:k} \mmid \pi^{\otimes k})\,.
\end{align*}
Invoking \eqref{eq:lsi} of the mean-field invariant measure (and tensorizing) leads to
\begin{align*}
    \msf{K}_k
    &\leq \frac{C_{\msf{LSI}}(\pi)}{2}\, \msf{FI}(\mu^{1:k} \mmid \pi^{\otimes k}) \\
    &= \frac{2C_{\msf{LSI}}(\pi)}{\sigma^4} \sum_{i=1}^k \E\Bigl[\Bigl\lVert\frac{1}{N-1}\sum_{\substack{j=1 \\ j \neq i}}^k \nabla W(X^i - X^j) - \int \nabla W(X^i - \cdot) \,\D \pi \\
    &\qquad\qquad\qquad \qquad \qquad \qquad{} + \frac{N-k}{N-1} \int \nabla W(X^i - \cdot) \,\D \mu^{k+1|1:k}(\cdot \mid X^{1:k}) \Bigr\rVert^2\Bigr] \\
    &\leq \frac{4k\,C_{\msf{LSI}}(\pi)}{\sigma^4\,{(N-1)}^2}\,  \underset{\msf A}{\underbrace{\E\Bigl[\Bigl\lVert\sum_{\substack{j=2}}^k\Bigl( \nabla W(X^1 - X^j) - \int \nabla W(X^1 - \cdot)\, \D \pi\Bigr)\Bigr\rVert^2\Bigr]}}\\
    &\qquad{} + \frac{4k\,C_{\msf{LSI}}(\pi)\,{(N-k)}^2}{\sigma^4\,{(N-1)}^2} \, \underset{\msf B}{\underbrace{\E\Bigl[\Bigl\lVert\int \nabla W(X^1 - \cdot) \,\bigl(\D \mu^{k+1|1:k}(\cdot \mid X^{1:k}) - \D \pi\bigr)\Bigr\rVert^2\Bigr]}}\,,
\end{align*}
where the last line follows from exchangeability and $\norm{a+b}^2 \leq 2\,(\norm{a}^2 +\norm{b}^2)$ for vectors $a,b\in\R^d$. 

\subsubsection{Bounding the Error Terms}
We now handle terms $\msf A, \msf B$ separately.
\begin{align*}
    \msf{A} 
    &= \sum_{j=2}^k \E[\norm{\nabla W(X^1 - X^j) - \E_\pi\nabla W(X^1 - \cdot)}^2] \\
    &\qquad + \sum_{\substack{i,j=2 \\i \neq j}}^k \E \bigl\langle \nabla W(X^1 - X^i) - \E_\pi \nabla W(X^1 - \cdot),\,\nabla W(X^1 - X^j) - \E_{\pi}\nabla W(X^1 - \cdot)\bigr\rangle \\
    &\underset{\text{(i)}}{=} (k-1) \E[\norm{\nabla W(X^1 - X^2) - \E_{ \pi}\nabla W(X^1 - \cdot)}^2] + \\
    &\qquad +(k-1)\,(k-2)\E\bigl \langle \nabla W(X^1- X^2) - \E_{\pi} \nabla W(X^1 - \cdot),\\
    &\qquad\qquad\qquad\qquad\qquad \qquad\qquad\qquad\qquad\qquad\qquad\nabla W(X^1 - X^3) - \E_{\pi}\nabla W(X^1 - \cdot)\bigr\rangle \\
    &\underset{\text{(ii)}}{\leq} (k-1)\, \beta_W^2 \E[\norm{X - Y}^2] \\
    &\quad + (k-1)^2 \E\bigl \langle \nabla W(X^1- X^2) - \E_{\pi} \nabla W(X^1 - \cdot),\,
    \nabla W(X^1 - X^3) - \E_{\pi}\nabla W(X^1 - \cdot)\bigr\rangle\,,
\end{align*}
where we used the exchangeability of the particles in (i) and the smoothness of $W$ in (ii).
Here, $X\sim \mu^1$ and $Y\sim \pi$ are independent.

Let us deal with these two terms separately.
For the first term, let $\bar Y \sim \pi$ be \emph{optimally} coupled with $X$. Then, by independence and sub-Gaussian concentration (implied by~\eqref{eq:lsi}),
\begin{align}
    \E[\norm{X-Y}^2]
    &\le 2\E[\norm{X-\bar Y}^2] + 2\E[\norm{Y-\bar Y}^2]
    = 2\,\mc W_2^2(\mu^1,\pi) + 4\E[\norm{Y-\E Y}^2] \nonumber\\
    &\le 4C_{\msf{LSI}}(\pi) \KL(\mu^1 \mmid \pi) + 4d C_{\msf{LSI}}(\pi)
    \le 4 C_{\msf{LSI}}(\pi)\, (\msf{K}_3 + d)\,,\label{eq:second_moment_bd}
\end{align}
where the second inequality follows from \eqref{eq:talagrand}, and the last one follows from the data-processing inequality for the $\KL$ divergence.
For the second term, the Cauchy--Schwarz inequality leads to
\begin{align}
    &\E\bigl \langle \nabla W(X^1- X^2) - \E_{\pi} \nabla W(X^1 - \cdot),\, \nabla W(X^1 - X^3) - \E_{\pi}\nabla W(X^1 - \cdot)\bigr\rangle \nonumber\\
    &\qquad = \E\bigl \langle \nabla W(X^1- X^2) - \E_{\pi} \nabla W(X^1 - \cdot),\,\E_{\mu^{3 \mid 1:2}(\cdot \mid X^{1:2})} \nabla W(X^1 - \cdot) - \E_{\pi}\nabla W(X^1 - \cdot)\bigr\rangle \nonumber\\
 & \qquad\leq\beta_{W}^{2}\sqrt{\E[\norm{X-Y}^{2}]}\sqrt{\E \mc W_2^2\bigl(\mu^{3\mid 1:2}(\cdot \mid X^{1:2}),\, \pi\bigr)}\nonumber \\
 & \qquad\underset{\text{(i)}}{\leq}\beta_{W}^{2}\sqrt{4C_{\msf{LSI}}(\pi)\,(\msf K_{3}+d)}\sqrt{2C_{\msf{LSI}}(\pi)\E\KL\bigl(\mu^{3\mid 1:2}(\cdot \mid X^{1:2}) \bigm\Vert \pi\bigr)}\nonumber \\
 & \qquad\underset{\text{(ii)}}{\leq}3\beta_{W}^{2}C_{\msf{LSI}}(\pi)\sqrt{\msf K_{3}+d}\sqrt{\msf K_{3}}\label{eq:boosting-kN}\\
 & \qquad\leq3\beta_{W}^{2}C_{\msf{LSI}}(\pi)\,(\msf K_{3}+d)\,,\nonumber 
\end{align}
where in (i) we applied the bound~\eqref{eq:second_moment_bd} as well as~\eqref{eq:talagrand}, and in (ii) we used the chain rule for the $\msf{KL}$ divergence.

We return to the analysis of the term $\msf B$. In a similar way,
we obtain 
\begin{align*}
\msf B
&= \E\Bigl[\Bigl\lVert\int \nabla W(X^1 - \cdot) \,\bigl(\D \mu^{k+1|1:k}(\cdot \mid X^{1:k}) - \D \pi\bigr)\Bigr\rVert^2\Bigr]
\le \beta_{W}^{2}\E\mc W_{2}^{2}\bigl(\mu^{k+1|1:k}(\cdot\mid X^{1:k}),\,\pi\bigr)\\
 & \leq2\beta_{W}^{2}C_{\msf{LSI}}(\pi)\,(\msf K_{k+1}-\msf K_{k})\,.
\end{align*}

\subsubsection{Induction}

Putting our bounds on $\msf A$ and $\msf B$ together, we obtain for $N\geq 30$,
\begin{equation}
\msf K_{k}\leq\frac{30 k^{3}\beta_{W}^{2}C_{\msf{LSI}}^{2}(\pi)}{\sigma^{4} N^2}\,(\msf K_{3}+d)+\frac{8k\beta_{W}^{2}C_{\msf{LSI}}^{2}(\pi)}{\sigma^{4}}\,(\msf{K_{k+1}-\msf K_{k})}\,.\label{eq:recursive-ineq}
\end{equation}
In particular, the case of $k=N$ involves our bounds only on $\msf A$,
leading to
\[
\msf K_{N}\le\frac{30N\beta_{W}^{2}C_{\msf{LSI}}^{2}(\pi)}{\sigma^{4}}\,(\msf K_{3}+d)\,.
\]
By grouping together the $\msf{K}_k$ terms in~\eqref{eq:recursive-ineq}, 
\begin{align}
\msf K_{k}\leq\underset{=:\mc C_{k}}{\underbrace{\frac{8k\beta_{W}^{2}C_{\msf{LSI}}^{2}(\pi)/\sigma^{4}}{1+8k\beta_{W}^{2}C_{\msf{LSI}}^{2}(\pi)/\sigma^{4}}}}\,\Bigl(\msf K_{k+1}+\bigl(\frac{2k}{N}\bigr)^{2}\,(\msf K_{3}+d)\Bigr)\,.\label{eq:recursion-on-Kk}
\end{align}
Iterating this inequality down to $k=3$, for $\rho \defeq \nicefrac{\sigma^{4}}{8\beta_{W}^{2}C_{\msf{LSI}}^{2}(\pi)}$,
\begin{align*}
\msf K_{3} & \leq \Bigl(\prod_{k=3}^{N-1}\mc C_{k}\Bigr)\,\frac{30N\beta_{W}^{2}C_{\msf{LSI}}^{2}(\pi)}{\sigma^{4}}\,(\msf K_{3}+d)+\sum_{k=3}^{N-1}\Bigl(\prod_{\ell=3}^{k}\mc C_{\ell}\Bigr)\,\bigl(\frac{2k}{N}\bigr)^{2}\,(\msf K_{3}+d)\\
 &\le \underbrace{\biggl[\Bigl(\prod_{k=3}^{N-1}\mc C_{k}\Bigr)\,\frac{4N}{\rho}+\sum_{k=3}^{N-1}\Bigl(\prod_{\ell=3}^{k}\mc C_{\ell}\Bigr)\,\bigl(\frac{2k}{N}\bigr)^{2}\biggr]}_{\eqqcolon c_{N}}\,(\msf K_{3}+d)\,.
\end{align*}

Now we show $c_{N}<1/2$, which implies $\msf K_{3}\leq2c_{N}d$.
We require the following lemma.

\begin{lemma}\label{lem:coefficient_product_bound}
For $3\leq i\leq k\leq N$,
\[
\prod_{\ell=i}^{k}\mc C_{\ell}\leq\Bigl(\frac{i+\rho}{k+1+\rho}\Bigr)^{\rho}\,.
\]
\end{lemma}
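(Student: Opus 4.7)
The plan is to reduce this to a clean analytic inequality, once we unpack the definition of $\mc C_\ell$. Recall
\[
\mc C_\ell = \frac{8\ell\beta_W^2 C_{\msf{LSI}}^2(\pi)/\sigma^4}{1 + 8\ell\beta_W^2 C_{\msf{LSI}}^2(\pi)/\sigma^4} = \frac{\ell/\rho}{1 + \ell/\rho} = \frac{\ell}{\ell+\rho}\,,
\]
so taking logarithms, the claim $\prod_{\ell=i}^k \mc C_\ell \le \bigl(\frac{i+\rho}{k+1+\rho}\bigr)^\rho$ is equivalent to
\[
\sum_{\ell=i}^k \log\Bigl(1 + \frac{\rho}{\ell}\Bigr) \ge \rho\,\log\frac{k+1+\rho}{i+\rho}\,.
\]

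To prove this, I would first note that the right-hand side is exactly the integral $\int_i^{k+1} \frac{\rho}{s+\rho}\ds$. Next, since the map $s \mapsto \log(1+\rho/s)$ is decreasing on $(0,\infty)$, the left-Riemann-sum comparison yields
\[
\sum_{\ell=i}^k \log\Bigl(1 + \frac{\rho}{\ell}\Bigr) \ge \int_i^{k+1} \log\Bigl(1 + \frac{\rho}{s}\Bigr)\ds\,.
\]
Then I would apply the elementary inequality $\log(1+t) \ge t/(1+t)$ for $t > -1$ with $t = \rho/s$, which gives $\log(1 + \rho/s) \ge \rho/(s+\rho)$ pointwise. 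Integrating this bound over $[i, k+1]$ matches the target integral exactly and completes the argument.

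The proof is essentially a two-line reduction, and there is no real obstacle—the only thing to verify carefully is the direction of the Riemann-sum comparison (which is correct since the integrand is decreasing, so the left sum overestimates the integral) and the standard inequality $\log(1+t) \ge t/(1+t)$, both of which are routine. The only place where one must be slightly careful is that $i \ge 3 > 0$ and $\rho > 0$, so $\log(1+\rho/\ell)$ is well defined and positive throughout the range of summation.
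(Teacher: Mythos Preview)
Your proof is correct and essentially the same as the paper's. Both arguments combine the elementary bound $\log(1+t)\ge t/(1+t)$ (equivalently, the paper's $\log(1-x)\le -x$ with $x=\rho/(\ell+\rho)$) with a left-Riemann-sum comparison for a decreasing function; the only difference is the order in which the two steps are applied---the paper first applies the log inequality at the integer points and then compares the resulting sum $\sum_\ell \rho/(\ell+\rho)$ to $\int_i^{k+1}\rho/(s+\rho)\,\D s$, whereas you first pass to the integral of $\log(1+\rho/s)$ and then invoke the log inequality pointwise under the integral.
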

\begin{proof}
For $\mc C_{\ell}=\frac{\ell\rho^{-1}}{1+\ell\rho^{-1}}$, we have
\[
C\deq\log\prod_{\ell=i}^{k}\mc C_{\ell}=
\sum_{\ell=i}^{k}\log\Bigl(1-\frac{1}{1+\ell\rho^{-1}}\Bigr)
\leq-\sum_{\ell=i}^{k}\frac{1}{1+\ell\rho^{-1}}\,.
\]
As the summand is decreasing in $\ell$, it follows that 
\[
C\leq
- \sum_{\ell=i}^k \int_\ell^{\ell+1} \frac{1}{1+x\rho^{-1}} \, \D x
= -\int_{i}^{k+1}\frac{1}{1+x\rho^{-1}}\,\D x=-\rho\log\frac{k+1+\rho}{i+\rho}\,.
\]
Therefore, 
\begin{align*}
\prod_{\ell=i}^{k}\mc C_{\ell}=\exp C\leq \Bigl(\frac{k+1+\rho}{i+\rho}\Bigr)^{-\rho}\,,
\end{align*}
which proves the lemma.
\end{proof}

Using Lemma~\ref{lem:coefficient_product_bound}, we obtain 
\[
c_{N}\leq 4\,{(3+\rho)}^{\rho}\,\Bigl(\frac{N^{1-\rho}}{\rho}+\frac{1}{N^{2}}\sum_{k=3}^{N-1}k^{2-\rho}\Bigr)\,.
\]
Under Assumption~\ref{as:weak_interaction}, i.e. $\rho \geq 3$, we may assume $\rho = 3$ since we can always take a worse bound on the constants $\beta_{W}$ so that $\rho=3$.
As seen shortly, the rate does not improve even if $\rho > 3$.\footnote{Alternatively, one can show the bound in Lemma~\ref{lem:coefficient_product_bound} decreases in $\rho$, so we can just substitute $\rho = 3$ therein.}
For $\rho = 3$ and $N\ge 100$, we therefore obtain
\[
c_{N}\leq 864\, \Bigl(\frac{1}{3N^{2}}+\frac{1}{N^{2}} \sum_{k=3}^{N-1} \frac{1}{k}\Bigr)\leq \frac 1 2\,,
\]
and thus
\begin{align}\label{eq:K3-bound}
\msf K_{3}\lesssim\frac{d\log N}{N^{2}}\,.
\end{align}

\subsubsection{Bootstrapping} 
Substituting the bound \eqref{eq:K3-bound} for $\msf K_{3}$ into the recursive inequality  \eqref{eq:recursion-on-Kk}, we end up with a suboptimal rate of $\widetilde{\mc O}(k^{3}/N^{2})$ for $\msf K_{k}$. 
To improve the bound, we substitute
our established bound (\ref{eq:K3-bound}) into (\ref{eq:boosting-kN}), which results in an improved recursive inequality.
Indeed,
\begin{align*}
    \msf A
    &\lesssim k\beta_W^2 C^2_{\msf{LSI}}(\pi)\,({\KL_3} + d) + k^2 \beta_W^2 C_{\msf{LSI}}(\pi)\sqrt{{\KL_3} + d}\sqrt{\KL_3}
    \lesssim dk\beta_W^2 C_{\msf{LSI}}(\pi)\sqrt{\log N}
\end{align*}
and therefore
\begin{align*}
\msf K_{k} &\leq \widetilde{\mc O}\Bigl( \frac{dk^2 \beta_W^2 C_{\msf{LSI}}^2(\pi)}{\sigma^4 N^2}\Bigr)
+\frac{8k\beta_{W}^{2}C_{\msf{LSI}}^2(\pi)}{\sigma^{4}}\,(\msf K_{k+1}-\msf K_{k})\,.
\end{align*}
For $k=N$ this yields
\begin{align*}
\msf K_{N} &\le \widetilde{\mc O}\Bigl( \frac{d\beta_W^2 C_{\msf{LSI}}^2(\pi)}{\sigma^4}\Bigr)\,.
\end{align*}
Regrouping $\msf K_{k}$ as before, we obtain
\[
\msf K_{k}\leq\mc C_{k}\,\Bigl(\msf K_{k+1}+\widetilde{\mc O}\bigl( \frac{dk}{N^2}\bigr)\Bigr)\,.
\]
Iterating this down to $k=N$,
\begin{align*}
\msf K_{k}
 &\leq\Bigl(\prod_{\ell=k}^{N-1}\mc C_\ell\Bigr)\,\msf K_{N}+\sum_{\ell=k}^{N-1}\Bigl(\prod_{j=k}^{\ell}\mc C_{j}\Bigr) \,\widetilde{\mc O}\bigl(\frac{d\ell}{N^2}\bigr) \\
 & \underset{\text{(i)}}{\le} \widetilde{\mc O}\Bigl(\frac{k^3}{N^3} \, \frac{d\beta_W^2 C_{\msf{LSI}}^2(\pi)}{\sigma^4} + \sum_{\ell=k}^{N-1} \frac{k^3}{\ell^3} \, \frac{d\ell}{N^2} \Bigr)
 \underset{\text{(ii)}}{\le} \widetilde{\mc O}\bigl(\frac{dk^2}{N^2}\bigr)\,,
\end{align*}
where in (i) we used Lemma~\ref{lem:coefficient_product_bound} with
$\rho=3$, and (ii) follows from $\rho \ge 3$ and $\sum_{\ell\geq k} \ell^{-2}\leq k^{-1}$.
Therefore, for some fixed $k$ it suffices to take  $N=100\vee\widetilde{\Omega}(k\sqrt d/\epsilon)$ to achieve $\epsilon^2$-bias in $\msf{KL}$, completing the proof of Theorem~\ref{thm:N_choice_bias}.

\subsection{Strongly Convex Case}

The following propagation of chaos argument for the strongly log-concave case is based on~\cite{sznitman1991topics}. Let $(X^{1:N}_t)_{t \geq 0}$ denote the stochastic process following the finite-particle stochastic differential equation~\eqref{eq:finite_particle_interactive}.
Let the corresponding semigroup be denoted ${(\mc T_t)}_{t\ge 0}$, defined as follows.
For any test function $f: \R^{d \times N} \to \R$,
\begin{align*}
    \mc T_t f(x^{1:N}) = \E[f(X^{1:N}_t)\mid X_0^{1:N} = x^{1:N}]\,.
\end{align*}
Then, the following simple lemma proves Wasserstein contraction for the finite-particle system.

\begin{lemma}\label{lem:one_step_w2_contract}
    Under Assumption~\ref{as:str_cvx_VW} and for $N \ge \frac{\alpha_V - {\alpha_W^-}}{\alpha_V + {(\alpha_W)}_-}$, ${(\mc T_t)}_{t\ge 0}$ is a contraction in the $2$-Wasserstein distance with exponential rate at least $\alpha/2$, where $\alpha \deq \alpha_V + {\alpha_W^-}$.
    In other words, for any measures $\mu_0^{1:N}$, $\nu_0^{1:N}$ in $\mc P_2(\R^{d\times N})$,
    \begin{align*}
        \mc W_2(\mu_0^{1:N} \mc T_t,\, \nu_0^{1:N} \mc T_t)
        &\leq \exp(-\alpha t/2)\, \mc W_2(\mu_0^{1:N}, \nu_0^{1:N})\,.
    \end{align*}
\end{lemma}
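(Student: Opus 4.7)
The plan is to use a synchronous coupling argument. Given $\mu_0^{1:N}, \nu_0^{1:N} \in \mc P_2(\R^{d \times N})$, I would take an arbitrary coupling $(X_0^{1:N}, Y_0^{1:N})$ with $X_0^{1:N} \sim \mu_0^{1:N}$ and $Y_0^{1:N} \sim \nu_0^{1:N}$, and run both particle systems~\eqref{eq:finite_particle_interactive} driven by the \emph{same} Brownian motions $\{B_t^i\}_{i \in [N]}$. Since the noise cancels, $Z_t^i \defeq X_t^i - Y_t^i$ evolves pathwise according to an ODE, and I can directly compute $\frac{d}{dt}\sum_i \lVert Z_t^i\rVert^2$.

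The two contributions to the derivative are handled separately. First, the confinement term contributes $-2\sum_i \langle Z_t^i, \nabla V(X_t^i) - \nabla V(Y_t^i)\rangle$, which by $\alpha_V$-uniform convexity of $V$ is bounded above by $-2\alpha_V \sum_i \lVert Z_t^i \rVert^2$. Second, for the interaction term, I would symmetrize the sum over ordered pairs $(i,j)$ and $(j,i)$; using that $\nabla W$ is odd, the pair contribution becomes $\langle Z_t^i - Z_t^j,\, \nabla W(X_t^i - X_t^j) - \nabla W(Y_t^i - Y_t^j)\rangle$. Applying $\alpha_W$-uniform convexity of $W$ at the points $u = X_t^i - X_t^j$ and $v = Y_t^i - Y_t^j$, and using $\alpha_W \geq \alpha_W^-$ together with the sign of $\lVert u - v\rVert^2$, this pair contribution is at least $\alpha_W^- \lVert Z_t^i - Z_t^j \rVert^2$. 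Therefore the interaction contribution is bounded above by $-\frac{\alpha_W^-}{N-1} \sum_{i,j} \lVert Z_t^i - Z_t^j \rVert^2$.

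Since $\alpha_W^- \le 0$, I would then use the trivial bound $\sum_{i,j} \lVert Z_t^i - Z_t^j\rVert^2 = 2N \sum_i \lVert Z_t^i\rVert^2 - 2\,\lVert \sum_i Z_t^i\rVert^2 \le 2N \sum_i \lVert Z_t^i\rVert^2$ (dropping the nonpositive correction after multiplying by $\alpha_W^-$) to obtain
\begin{align*}
    \frac{d}{dt}\sum_i \lVert Z_t^i\rVert^2 \;\le\; -2\,\Bigl(\alpha_V + \frac{N\,\alpha_W^-}{N-1}\Bigr) \sum_i \lVert Z_t^i\rVert^2\,.
\end{align*}
A short algebraic check shows that the hypothesis $N \ge \frac{\alpha_V - \alpha_W^-}{\alpha_V + \alpha_W^-}$ is exactly what is needed to ensure $\alpha_V + \frac{N\alpha_W^-}{N-1} \ge \frac{\alpha}{2}$, where $\alpha = \alpha_V + \alpha_W^-$. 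Gr\"onwall's inequality then yields $\sum_i \lVert Z_t^i\rVert^2 \le \exp(-\alpha t) \sum_i \lVert Z_0^i\rVert^2$ pathwise, and taking expectations followed by the infimum over initial couplings delivers $\mc W_2(\mu_0^{1:N} \mc T_t, \nu_0^{1:N} \mc T_t) \le \exp(-\alpha t/2)\, \mc W_2(\mu_0^{1:N}, \nu_0^{1:N})$.

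The only subtle point is keeping track of signs when $\alpha_W < 0$: one must replace $\alpha_W$ by $\alpha_W^-$ at the right moment (since $\lVert Z^i - Z^j\rVert^2 \ge 0$), and then exploit the cancellation with $\lVert \sum_i Z^i\rVert^2$ in the right direction. This is what produces the mild threshold on $N$ and the factor $\frac{N}{N-1}$ (which is consistent with the strong log-concavity constant for $\mu^{1:N}$ stated in Lemma~\ref{lem:strongCvxCase}). No other obstacle is anticipated; the rest is routine.
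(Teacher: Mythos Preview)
Your proposal is correct and is essentially the same approach the paper takes. The paper's proof simply invokes Lemma~\ref{lem:strongCvxCase} to conclude that $\mu^{1:N}$ is $\frac{2}{\sigma^2}\,(\alpha_V + \tfrac{N}{N-1}\,\alpha_W^-)$-strongly log-concave, then cites the well-known fact (``e.g., via synchronous coupling'') that the Langevin diffusion for a strongly log-concave target contracts in $\mc W_2$ at the corresponding rate; your argument is exactly that synchronous-coupling computation carried out directly on the drift of~\eqref{eq:finite_particle_interactive}, arriving at the same constant $\alpha_V + \tfrac{N}{N-1}\,\alpha_W^-$ and the same threshold on $N$.
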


\begin{proof}
    Note that ${(\mc T_t)}_{t\ge 0}$ corresponds to the time-scaled (by factor $\sigma^2/2$) Langevin diffusion with stationary distribution $\mu^{1:N}$, which is $\frac{2}{\sigma^2}\,(\alpha_V + \frac{N}{N-1}\,{\alpha_W^-})$-strongly log-concave by Lemma~\ref{lem:strongCvxCase}. The condition on $N$ ensures that this is at least $\alpha/\sigma^2$. Consequently, it is well-known (e.g., via synchronous coupling) that the diffusion is a contraction in the Wasserstein distance with rate at least $\alpha/2$.
\end{proof}

We next bound the error incurred in one step from applying the finite-particle semigroup to $\pi^{\otimes N}$.

\begin{lemma}\label{lem:one_step_w2_err}
    Under Assumptions~\ref{as:smoothness} and~\ref{as:str_cvx_VW},
    for any $\lambda > 0$,
    $\mc T_{h}$ induces the following error in Wasserstein distance:
    \begin{align*}
        \mc W_2^2(\pi^{\otimes N} \mc T_h, \, \pi^{\otimes N})
        &\le \frac{(1+\lambda^{-1})\,\beta_W^2 \sigma^2 d h^2}{\alpha} \exp\Bigl(\frac{(1+\lambda)\,\beta_W^2 h^2}{2}\Bigr)\,.
    \end{align*}
\end{lemma}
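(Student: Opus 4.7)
The plan is to use a synchronous coupling between the finite-particle system and $N$ independent copies of the mean-field SDE. Let $(X_t^{1:N})_{t\ge 0}$ follow~\eqref{eq:finite_particle_interactive} with $X_0^{1:N}\sim \pi^{\otimes N}$ and Brownian motions $\{B^i\}_{i\in [N]}$, and construct $(Y_t^{1:N})_{t\ge 0}$ as $N$ independent copies of~\eqref{eq:mean_field_interactive} with the same initial condition $Y_0=X_0$ and driven by the same $\{B^i\}$. Since $\pi$ is invariant for the mean-field SDE and the copies are mutually independent, $Y_t\sim \pi^{\otimes N}$ for all $t$, whereas $X_h\sim \pi^{\otimes N}\mc T_h$. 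Thus, by exchangeability (Lemma~\ref{lem:wass_exchangeable}),
\[
    \mc W_2^2(\pi^{\otimes N}\mc T_h,\,\pi^{\otimes N}) \le \E\|X_h - Y_h\|^2 = N\,\E\|\Delta_h^1\|^2\,, \qquad \Delta_t^i \defeq X_t^i - Y_t^i\,.
\]

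Because the Brownian motions cancel pathwise in this coupling, $\Delta^1$ satisfies a deterministic-per-path ODE; I would reorganize the drift using the mean-field potential $G_\pi(x)\defeq V(x) + \int W(x-z)\,\pi(\D z)$, which is $(\alpha_V+\alpha_W)$- (hence at least $\alpha$-) uniformly convex:
\[
    \frac{\D}{\D t}\Delta_t^1 = -\bigl[\nabla G_\pi(X_t^1) - \nabla G_\pi(Y_t^1)\bigr] - \tilde{M}_t^1\,,\quad \tilde{M}_t^1 \defeq \frac{1}{N-1}\sum_{j\ne 1}\nabla W(X_t^1 - X_t^j) - \int\nabla W(X_t^1-z)\,\pi(\D z)\,.
\]
The key estimate is a uniform bound $\sup_t\E\|\tilde{M}_t^1\|^2 = O(\beta_W^2\sigma^2 d/(\alpha N))$. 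I would decompose $\tilde{M}_t^1 = M_t^1 + R_t^1$, where $M_t^1$ is the analogous expression with all $X$'s replaced by $Y$'s. Under the synchronous coupling each $Y^j$ depends only on $(B^j, Y_0^j)$, so $\{Y^j\}_{j\ne 1}$ are conditionally iid $\sim\pi$ given $Y^1$; combined with strong log-concavity of $\pi$ (with constant $\ge\alpha$), which yields $\E_\pi\|Z-\E_\pi Z\|^2\le d\sigma^2/(2\alpha)$, this gives $\E\|M_t^1\|^2\le \beta_W^2\sigma^2 d/[2\alpha(N-1)]$. The residual $R_t^1$ is Lipschitz-controlled in the $\|\Delta^j\|$'s via the $\beta_W$-smoothness of $W$ and exchangeability.

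To close the argument, I would combine the temporal Cauchy--Schwarz $\E\|\Delta_t^1\|^2 \le t\int_0^t \E\|D_s^1\|^2\,\D s$ with Young's inequality $\E\|D_s^1\|^2 \le (1+\lambda)\beta_W^2\,\E\|\Delta_s^1\|^2 + (1+\lambda^{-1})\E\|M_s^1\|^2$, after absorbing the $\nabla V$ contribution into the $\alpha$-convexity of $\nabla G_\pi$. This yields an integral inequality of the form $u'(t) \le (1+\lambda)\beta_W^2\, t\, u(t) + C'$ for $u(t)\defeq\int_0^t \E\|D_s^1\|^2\,\D s$. The time-inhomogeneous \Gronwall lemma then produces the exponential factor $\exp((1+\lambda)\beta_W^2 h^2/2)$, and multiplying by $N$ gives the stated form.

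The main obstacle is carefully routing the $\nabla V$ contribution through $\alpha$-convexity of $\nabla G_\pi$---rather than via its $(\beta_V+\beta_W)$-smoothness---so that only $\beta_W$ appears in the \Gronwall exponent; the residual $R_t^1$ feedback similarly must be absorbed into the strong convexity rather than handled via Lipschitz estimates alone.
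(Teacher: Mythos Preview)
Your coupling and overall decomposition match the paper's approach, and the treatment of the centered fluctuation term $M_t^1$ via the i.i.d.\ structure of the $Y^j$'s is exactly right. However, there is a genuine gap in how you plan to ``absorb the $\nabla V$ contribution into the $\alpha$-convexity of $\nabla G_\pi$.'' The displayed Young inequality
\[
\E\|D_s^1\|^2 \le (1+\lambda)\,\beta_W^2\,\E\|\Delta_s^1\|^2 + (1+\lambda^{-1})\,\E\|M_s^1\|^2
\]
cannot hold as written: $D_s^1$ contains the term $-[\nabla G_\pi(X_s^1)-\nabla G_\pi(Y_s^1)]$, and convexity of $G_\pi$ only gives a \emph{sign} on the inner product $\langle \Delta_s^1,\, \nabla G_\pi(X_s^1)-\nabla G_\pi(Y_s^1)\rangle$, not an upper bound on the norm $\|\nabla G_\pi(X_s^1)-\nabla G_\pi(Y_s^1)\|$. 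Bounding $\|D_s^1\|^2$ directly would force you through smoothness and introduce $(\beta_V+\beta_W)^2$ into the Gr\"onwall exponent---precisely the outcome you flag as the obstacle but do not resolve. The same issue bites for the residual $R_t^1$: its contribution to $\|D_s^1\|^2$ again carries a $\beta_W\|\Delta_s^1\|$ term that you cannot cancel against convexity at the level of norms.

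The fix (and this is what the paper does) is to exploit convexity \emph{before} integrating and squaring. Differentiate $\|\Delta_t^i\|^2$ so that the $G_\pi$-term appears as an inner product $-2\langle \Delta_t^i,\, \nabla V(X_t^i)-\nabla V(Y_t^i)\rangle - \frac{2}{N-1}\sum_{j\ne i}\langle \Delta_t^i,\, \nabla W(X_t^i-X_t^j)-\nabla W(Y_t^i-X_t^j)\rangle$, which is nonpositive by Assumption~\ref{as:str_cvx_VW} and can simply be dropped. What survives is bounded by $2\|\Delta_t^i\|$ times cross terms involving only $\beta_W$; dividing by $2\|\Delta_t^i\|$ gives a bound on $\D\|\Delta_t^i\|$. \emph{Now} integrate in time, square, and apply Young's inequality with parameter $\lambda$---this is where the temporal Cauchy--Schwarz enters, but applied to $\int_0^t \D\|\Delta_s^i\|$, not to $\int_0^t D_s^i\,\D s$. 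The resulting Gr\"onwall inequality then carries only $(1+\lambda)\beta_W^2 t$ as its coefficient, yielding the stated exponent.
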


\begin{proof}
    We resort to a coupling argument, noting that $\pi$ is stationary under \eqref{eq:mean_field_interactive}. 
    Starting with $\pi^{\otimes N}$, we evolve ${(X^{1:N}_t)}_{t\ge 0}$ and ${(Y^{1:N}_t)}_{t\ge 0}$ according to \eqref{eq:finite_particle_interactive} and \eqref{eq:mean_field_interactive} respectively, i.e. $X^{1:N}_t \sim \pi^{\otimes N}\mc T_t$ and $Y^{1:N}_t \sim \pi^{\otimes N}$. This argument is adapted from the original propagation of chaos proof by \cite{sznitman1991topics}.

We can compute the evolution under a synchronous coupling as:
\begin{align*}
    \D (X^i_t - Y^i_t)
    &=  -\bigl(\nabla V(X^i_t) -\nabla V(Y^i_t) \bigr)\, \D t - \frac{1}{N-1} \sum_{\substack{j=1 \\ j \neq i}}^N \bigl(\nabla W(X^i_t - X^j_t) - \E_{\pi}\nabla W(Y^i_t - \cdot) \bigr)\, \D t \\
    &= -\bigl(\nabla V(X^i_t) -\nabla V(Y^i_t) \bigr)\, \D t - \frac{1}{N-1} \sum_{\substack{j=1 \\ j \neq i}}^N \bigl(\nabla W(X^i_t - X^j_t) - \nabla W(Y^i_t - X^j_t) \bigr)\, \D t \\
    &\qquad{} - \frac{1}{N-1} \sum_{\substack{j=1 \\ j \neq i}}^N \bigl(\nabla W(Y^i_t - X^j_t) - \nabla W(Y^i_t - Y^j_t)  \bigr)\, \D t \\
    &\qquad{} - \frac{1}{N-1} \sum_{\substack{j=1 \\ j \neq i}}^N \bigl(\nabla W(Y^i_t - Y^j_t) - \E_{\pi}\nabla W(Y^i_t - \cdot)  \bigr)\, \D t\,.
\end{align*}

Now let us denote by $\gradW(x, y) \deq \nabla W(x-y) - \E_{\pi} \nabla W(x-\cdot)$ the centered gradient (with respect to $\pi$).
By It\^o's formula and Assumption~\ref{as:str_cvx_VW},
\begin{align*}
    \D \norm{X_t^i - Y_t^i}^2
    &= 2\,\langle X_t^i - Y_t^i, \D (X_t^i - Y_t^i)\rangle \\
    &\le -2\,(\alpha_V + \alpha_W)\,\norm{X_t^i - Y_t^i}^2 \, \D t \\
    &\qquad{} -  \frac{2}{N-1} \sum_{\substack{j=1 \\ j \neq i}}^N \langle X_t^i - Y_t^i, \nabla W(Y^i_t - X^j_t) - \nabla W(Y^i_t - Y^j_t) \rangle \, \D t \\
    &\qquad{} -  \frac{2}{N-1} \sum_{\substack{j=1 \\ j \neq i}}^N \langle X_t^i - Y_t^i, \nabla W(Y^i_t - Y^j_t) - \E_{\pi}\nabla W(Y^i_t - \cdot) \rangle \, \D t \\
    &\le \frac{2\beta_W\,\norm{X_t^i - Y_t^i}}{N-1}\sum_{\substack{j=1 \\ j \neq i}}^N {\norm{X_t^j - Y_t^j}}\,\D t + \frac{2\,\norm{X_t^i - Y_t^i}}{N-1} \,\Bigl\lVert\sum_{\substack{j=1 \\ j \neq i}}^N\gradW(Y_t^i, Y_t^j)\Bigr\rVert \, \D t
\end{align*}
or
\begin{align*}
    \D \norm{X_t^i - Y_t^i}
    &\le \frac{\beta_W}{N-1}\sum_{\substack{j=1 \\ j \neq i}}^N {\norm{X_t^j - Y_t^j}}\,\D t + \frac{1}{N-1} \,\Bigl\lVert\sum_{\substack{j=1 \\ j \neq i}}^N\gradW(Y_t^i, Y_t^j)\Bigr\rVert \, \D t\,.
\end{align*}
Integrating and squaring,
\begin{align*}
    \norm{X_t^i - Y_t^i}^2
    &\le \Bigl\lvert \int_0^t \Bigl( \frac{\beta_W}{N-1}\sum_{\substack{j=1 \\ j \neq i}}^N {\norm{X_s^j - Y_s^j}} + \frac{1}{N-1} \,\Bigl\lVert\sum_{\substack{j=1 \\ j \neq i}}^N\gradW(Y_s^i, Y_s^j)\Bigr\rVert\Bigr) \, \D s\Bigr\rvert^2 \\
    &\le \frac{(1+\lambda)\,\beta_W^2 t}{N-1}\sum_{\substack{j=1 \\ j \neq i}}^N \int_0^t \norm{X_s^j - Y_s^j}^2 \, \D s + \frac{(1+\lambda^{-1})\,t}{{(N-1)}^2} \int_0^t \Bigl\lVert\sum_{\substack{j=1 \\ j \neq i}}^N\gradW(Y_s^i, Y_s^j)\Bigr\rVert^2 \, \D s\,,
\end{align*}
where the last line follows from Young's inequality.

Next, we take expectations.
Note that $\gradW(\cdot, \cdot)$ is centered in its second variable, so for any $j \neq k$,
\begin{align*}
    \E\langle\gradW(Y^i_t, Y^j_t), \gradW(Y^i_t, Y^k_{t})\rangle = 0\,.
\end{align*}
Otherwise, we can bound the terms via
\begin{align*}
    \E[\norm{\gradW(Y^i_t, Y^j_t)}^2]
    \leq \beta_W^2\E_{\substack{Y^j_t\sim \pi\\ Z \sim \pi}} [\norm{Y^j_t - Z}^2]
    \leq \frac{\beta_W^2 \sigma^2 d}{\alpha}\,.
\end{align*}
Here, $Z$ is an independent draw from $\pi$ and so cannot be reduced via coupling. The second inequality follows from a standard bound on the centered second moment of a strongly log-concave measure, using the fact that $\pi$ is $2\alpha/\sigma^2$-strongly log-concave~\cite[c.f.][]{DalKarRio22NonStrongly}.

Therefore, taking expectations and summing over the particles,
\begin{align*}
    \E[\norm{X_t^{1:N} - Y_t^{1:N}}^2]
    &\le (1+\lambda)\,\beta_W^2 t \int_0^t \norm{X_s^{1:N} - Y_s^{1:N}}^2 \, \D s + \frac{(1+\lambda^{-1})\,\beta_W^2 \sigma^2 d t^2}{\alpha}\,.
\end{align*}
By Gr\"onwall's inequality below,
\begin{align*}
    \E[\norm{X_h^{1:N} - Y_h^{1:N}}^2]
    &\le \frac{(1+\lambda^{-1})\,\beta_W^2 \sigma^2 d h^2}{\alpha} \exp\Bigl(\frac{(1+\lambda)\,\beta_W^2 h^2}{2}\Bigr)\,.
\end{align*}
This concludes the proof.
\end{proof}

\begin{lemma}[Gr\"onwall's Inequality]\label{lem:Gronwall}
    For $T>0$, let $f:[0,T] \to \R_{\geq 0}$ be bounded. Suppose that the following holds pointwise for some functions $a, b : [0,T] \to \R$, where $a$ is increasing:
    \[
    f(t) \leq a(t) + \int_0^t b(s) f(s)\, \D s\,.
    \]
    Then,
    \[
    f(t) \leq a(t) \exp\Bigl(\int_0^t b(s) \, \D s\Bigr)\,.
    \]
\end{lemma}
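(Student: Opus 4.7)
The plan is to use the integrating-factor method, which is standard for Gr\"onwall-type estimates. Introduce the auxiliary function
\[
G(t) \deq \int_0^t b(s)\,f(s)\,\D s,
\]
so that the hypothesis becomes $f(t) \le a(t) + G(t)$. Since $f$ is assumed bounded and $b$ integrable on $[0,T]$, $G$ is absolutely continuous with $G'(t) = b(t)\,f(t)$ almost everywhere. Substituting the hypothesis gives the differential inequality
\[
G'(t) \le b(t)\,a(t) + b(t)\,G(t).
\]

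Next I would multiply through by the integrating factor $\exp\bigl(-\int_0^t b(s)\,\D s\bigr)$. This rewrites the inequality as
\[
\frac{\D}{\D t}\Bigl[G(t)\exp\Bigl(-\int_0^t b(s)\,\D s\Bigr)\Bigr] \le b(t)\,a(t)\exp\Bigl(-\int_0^t b(s)\,\D s\Bigr).
\]
Integrating from $0$ to $t$ (using $G(0)=0$) yields
\[
G(t)\exp\Bigl(-\int_0^t b(s)\,\D s\Bigr) \le \int_0^t b(s)\,a(s)\exp\Bigl(-\int_0^s b(u)\,\D u\Bigr)\,\D s.
\]

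Now I would use monotonicity: since $a$ is increasing, $a(s) \le a(t)$ for $s\le t$, and the remaining integral telescopes explicitly:
\[
\int_0^t b(s)\exp\Bigl(-\int_0^s b(u)\,\D u\Bigr)\,\D s = 1 - \exp\Bigl(-\int_0^t b(s)\,\D s\Bigr).
\]
Rearranging gives $G(t) \le a(t)\,\bigl(\exp(\int_0^t b)-1\bigr)$, and combining this with $f(t)\le a(t)+G(t)$ produces the claimed bound $f(t)\le a(t)\exp(\int_0^t b(s)\,\D s)$.

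The main subtlety, rather than an obstacle, is just justifying the manipulation of $G'$ pointwise a.e. and integrating; this requires only that $b$ and $f$ be integrable, which follows from the boundedness of $f$ and the implicit integrability of $b$ needed to make the stated conclusion meaningful. Implicit nonnegativity of $b$ (needed for the integrating factor argument and for $a(s)\le a(t)$ to give a one-sided bound) is consistent with the intended application in Lemma~\ref{lem:one_step_w2_err}, where $b$ is a positive constant.
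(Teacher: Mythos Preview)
The paper does not supply a proof of this lemma; it is quoted as a standard result and used directly in the proof of Lemma~\ref{lem:one_step_w2_err}. Your integrating-factor argument is the textbook proof and is correct, including the observation that one needs $b\ge 0$ for the step $G'(t)=b(t)f(t)\le b(t)\,(a(t)+G(t))$ and for pulling $a(t)$ out of the integral; this nonnegativity is indeed implicit in the lemma's intended use (where $b$ is a positive constant), even though the statement in the paper does not make it explicit.
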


Composing Lemmas~\ref{lem:one_step_w2_contract} and~\ref{lem:one_step_w2_err}, we now prove our propagation of chaos results.

\medskip{}

\begin{proofof}{Theorem~\ref{thm:weak_prop_chaos}}
    Indeed, we have
    \begin{align*}
        &\mc W_2(\mu^{1:N}, \pi^{\otimes N})
        = \mc W_2(\mu^{1:N} \mc T_h,\, \pi^{\otimes N})
        \le \mc W_2(\mu^{1:N} \mc T_h,\, \pi^{\otimes N} \mc T_h) + \mc W_2(\pi^{\otimes N} \mc T_h, \, \pi^{\otimes N}) \\
        &\qquad\le \exp(-\alpha h/2)\,\mc W_2(\mu^{1:N}, \pi^{\otimes N}) + \sqrt{\frac{(1+\lambda^{-1})\,\beta_W^2 \sigma^2 d h^2}{\alpha}}\exp\Bigl(\frac{(1+\lambda)\,\beta_W^2 h^2}{4}\Bigr)\,.
    \end{align*}
    Rearranging,
    \begin{align*}
        \mc W_2(\mu^{1:N}, \pi^{\otimes N})
        &\le \frac{1}{1-\exp(-\alpha h/2)}\sqrt{\frac{(1+\lambda^{-1})\,\beta_W^2 \sigma^2 d h^2}{\alpha}}\exp\Bigl(\frac{(1+\lambda)\,\beta_W^2 h^2}{4}\Bigr)\,.
    \end{align*}
    Let $h\searrow 0$ first and then $\lambda \nearrow \infty$ to obtain
    \begin{align*}
        \mc W_2^2(\mu^{1:N}, \pi^{\otimes N})
        &\le \frac{4\beta_W^2 \sigma^2 d}{\alpha^3}\,.
    \end{align*}
    Finally, when $k < N$, we use exchangeability (see Lemma~\ref{lem:wass_exchangeable} below) to conclude the proof of~\eqref{eq:w2_weak_prop}.

For~\eqref{eq:kl_prop}, by the Bakry--\'Emery condition we have $C_{\msf{LSI}}(\pi) \leq \nicefrac{\sigma^2}{2\alpha}$, and tensorization~\cite[c.f.][Proposition 5.2.7]{bakry2014analysis} leads to $C_{\msf{LSI}}(\pi^{\otimes N}) \leq \nicefrac{\sigma^2}{2\alpha}$.
Thus, \eqref{eq:talagrand} leads to 
\[
    \msf{KL}(\mu^{1:N} \mmid \pi^{\otimes N}) \leq \frac{\sigma^2}{4\alpha}\, \msf{FI}(\mu^{1:N} \mmid \pi^{\otimes N})\,.
\]

However, one notes that the density of $\mu^{1:N}$ is log-smooth with parameter $\frac{2}{\sigma^2}\,(\beta_V + \frac{N}{N-1}\, \beta_W)$ (Lemma~\ref{lem:strongCvxCase}). Likewise, $\pi^{\otimes N}$ is log-smooth with parameter $\frac{2}{\sigma^2}\,(\beta_V + \beta_W)$. Now consider a functional $\mc F$ on the space of probability measures on $\mc P_{2,\rm ac}(\R^{d\times N})$ given by $\mc F: \nu \mapsto \E_{\nu}[\norm{\nabla \log \frac{\mu^{1:N}}{\pi^{\otimes N}}}^2]$.
Note that $\log(\mu^{1:N}/\pi^{\otimes N})$ is smooth with parameter at most $\frac{4}{\sigma^2}\,(\beta_V + \frac{N}{N-1}\, \beta_W) \le \frac{8}{\sigma^2}\,(\beta_V + \beta_W)$, for $N \ge 2$.

Next, note that for $Y^{1:N}\sim \pi^{\otimes N}$,
\begin{align*}
    \mc F(\pi^{\otimes N}) &= \E_{\pi^{\otimes N}} [\norm{\nabla \log \mu^{1:N} - \nabla \log \pi^{\otimes N}}^2] \\
    &= \frac{4N}{\sigma^4\,{(N-1)}^2} \E \Bigl[\Bigl\lVert\sum_{j=2}^N \Bigl(\nabla W(Y^1 - Y^j) - \int \nabla W(Y^1 - \cdot)\, \D \pi\Bigr)\Bigr\rVert^2\Bigr] \\
    &= \frac{4N}{\sigma^4\,{(N-1)}^2} \E\Bigl[\Bigl\lVert\sum_{j=2}^N \gradW(Y^1, Y^j)\Bigr\rVert^2\Bigr]\,,
\end{align*}
by using exchangeability and the definition of $\gradW$.

Subsequently, one derives the following inequality using the Wasserstein distance bound:
\begin{align*}
    \mc F(\mu^{1:N}) &\leq \frac{128}{\sigma^4}\,{(\beta_V + \beta_W)}^2\, \mc W_2^2(\mu^{1:N}, \pi^{\otimes N}) + 2\mc F(\pi^{\otimes N}) \\
    &\leq \frac{512 \beta_W^2 d}{\alpha^3\sigma^2}\, {(\beta_V + \beta_W)}^2 + \frac{8N}{\sigma^4\,{(N-1)}^2}\E\Bigl[\Bigl\lVert\sum_{j=2}^N \gradW(Y^1, Y^j)\Bigr\rVert^2\Bigr] \\
    &\le \frac{512 \beta_W^2 d}{\alpha^3 \sigma^2}\, {(\beta_V + \beta_W)}^2 + \frac{16\beta_W^2 N}{\sigma^4\,(N-1)}\E[\norm{Y^1-\E Y^1}^2] \\
    &\le \frac{512 \beta_W^2 d}{\alpha^3 \sigma^2}\, {(\beta_V + \beta_W)}^2 + \frac{16\beta_W^2 d}{\alpha \sigma^2}\,,
\end{align*}
by using \eqref{eq:w2_weak_prop} and the fact that $\gradW(\cdot,\cdot)$ is a centered random variable in its second argument. This concludes the proof for $k=N$, and as in the $\mc W_2^2$ bound, Lemma~\ref{lem:information_ineq} will conclude the proof for $k < N$.
\end{proofof}

\subsection{General Functional Case}\label{ap:general_functional_poc}

For any measure $\mu$, define its entropy as $\ent(\mu) = \int \log \mu \, \D \mu$.
We now provide a self-contained propagation of chaos argument in the general McKean{--}Vlasov setting, following~\cite{chen2022uniform}.
We begin with the following entropy toast inequality, i.e. half of the entropy sandwich inequality from~\cite{chen2022uniform}.

\begin{lemma}[Entropy Toast Inequality]\label{lem:entropy_toast}
    Define the empirical total energy for an $N$-finite particle system as follows.
    Given a measure $\nu^{1:N} \in \mc P_{2, \rm{ac}}(\R^{d \times N})$,
    \begin{align*}
        \mc E^N(\nu^{1:N}) = N \int \mc F(\rho_{x^{1:N}}) \, \nu^{1:N}(\D x^{1:N}) + \frac{\sigma^2}{2} \ent(\nu^{1:N})\,.
    \end{align*}

    Under Assumption~\ref{as:functional_convexity}, it holds for all measures $\nu^{1:N} \in \mc P_{2,\rm ac}(\R^{d \times N})$
    \[
        \frac{\sigma^2}{2} \KL(\nu^{1:N} \mmid \pi^{\otimes N}) \leq \mc E^N(\nu^{1:N}) - N \mc E(\pi)\,,
    \]
    where $\mc E$ is the total energy~\eqref{eq:total_energy} and $\pi$ is the stationary measure~\eqref{eq:mean_field_stat}.
\end{lemma}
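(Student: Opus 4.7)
The idea is to directly expand the KL divergence, use the fixed-point characterization of $\pi$ to turn the cross-entropy term into an integral against the first variation $\delta\mc F(\pi)$, and then exploit convexity of $\mc F$ to upper bound this by $\mc F$ evaluated at the empirical measure. Fix a representative $U \deq \delta \mc F(\pi, \cdot)$ of the first variation so that $\pi(x) = \frac 1 Z \exp(-\frac{2}{\sigma^2} U(x))$ for some normalizing constant $Z$; equivalently, $\log \pi = -\frac{2}{\sigma^2} U - \log Z$. Then
\[
\frac{\sigma^2}{2}\,\KL(\nu^{1:N}\mmid \pi^{\otimes N}) = \frac{\sigma^2}{2}\ent(\nu^{1:N}) + \int \sum_{i=1}^N U(x^i)\,\nu^{1:N}(\D x^{1:N}) + \frac{N\sigma^2}{2}\log Z\,,
\]
and using the tautology $\sum_i U(x^i) = N \int U \, \D\rho_{x^{1:N}}$ brings the empirical measure into the picture.

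The next step is to apply Assumption~\ref{as:functional_convexity}, which implies the tangent-line inequality
\[
\mc F(\rho_{x^{1:N}}) \ge \mc F(\pi) + \int \delta\mc F(\pi)\,\D(\rho_{x^{1:N}} - \pi)\,,
\]
i.e. $\int U \, \D\rho_{x^{1:N}} \le \mc F(\rho_{x^{1:N}}) - \mc F(\pi) + \int U \, \D\pi$. Integrating this inequality against $\nu^{1:N}$ and plugging into the display above yields
\[
\tfrac{\sigma^2}{2}\,\KL(\nu^{1:N}\mmid\pi^{\otimes N}) \le \mc E^N(\nu^{1:N}) - N\mc F(\pi) + N\int U\,\D\pi + \tfrac{N\sigma^2}{2}\log Z\,,
\]
where I have recognized $\frac{\sigma^2}{2}\ent(\nu^{1:N}) + N\int \mc F(\rho_{x^{1:N}})\,\nu^{1:N}(\D x^{1:N}) = \mc E^N(\nu^{1:N})$.

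It then remains to show the residual terms $N\int U\,\D\pi + \frac{N\sigma^2}{2}\log Z$ equal $-\frac{N\sigma^2}{2} \int \log\pi\,\D\pi$, so that the bound collapses to $\mc E^N(\nu^{1:N}) - N\mc E(\pi)$. This follows by integrating the identity $\log\pi = -\frac{2}{\sigma^2} U - \log Z$ against $\pi$, which gives $\int U\,\D\pi = -\frac{\sigma^2}{2}\int \log\pi\,\D\pi - \frac{\sigma^2}{2}\log Z$; the $\log Z$ terms cancel in the sum and the desired identification with $\mc E(\pi) = \mc F(\pi) + \frac{\sigma^2}{2}\int\log\pi\,\D\pi$ emerges. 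The main subtlety is that $\delta\mc F(\pi)$ is only defined up to an additive constant, but any shift is absorbed into $\log Z$ and disappears upon combining the two residual terms, so the argument is insensitive to this choice. Otherwise, the proof is a short chain of algebraic manipulations, and no analytic estimate beyond the first-order convexity inequality is needed.
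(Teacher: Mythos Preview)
Your proof is correct and follows essentially the same approach as the paper: both use the fixed-point characterization $\log\pi = -\tfrac{2}{\sigma^2}\,\delta\mc F(\pi) + \text{const}$ together with the first-order convexity inequality $\mc F(\rho_{x^{1:N}}) \ge \mc F(\pi) + \int \delta\mc F(\pi)\,\D(\rho_{x^{1:N}}-\pi)$. The only cosmetic difference is that the paper starts from $\mc E^N(\nu^{1:N}) - N\mc E(\pi)$ and bounds it below by $\tfrac{\sigma^2}{2}\KL$, whereas you expand the KL and bound it above; the algebra is the same, and your remark about the additive constant in $\delta\mc F(\pi)$ being absorbed into $\log Z$ is exactly how the paper handles it (there the constant cancels when integrated against the signed measure $\rho_{x^{1:N}}-\pi$).
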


\begin{proof}
    By Assumption~\ref{as:functional_convexity}, we have
    \begin{align*}
        &\mc E^N(\nu^{1:N}) - N\mc E(\pi)
        = N \E_{x^{1:N} \sim \nu^{1:N}}[\mc F(\rho_{x^{1:N}}) - \mc F(\pi)] + \frac{\sigma^2}{2}\,\bigl(\ent(\nu^{1:N}) - N\ent(\pi) \bigr) \\
        &\qquad \geq \E_{x^{1:N} \sim \nu^{1:N}}\Bigl[N \int \delta \mc F(\pi, z)\,(\rho_{x^{1:N}}(\D z) - \pi(\D z))\Bigr] + \frac{\sigma^2}{2}\,\bigl(\ent(\nu^{1:N}) - N\ent(\pi) \bigr) \\
        &\qquad =-\frac{\sigma^2}{2}\E_{x^{1:N} \sim \nu^{1:N}}\Bigl[N \int \log \pi(z)\,(\rho_{x^{1:N}}(\D z) - \pi(\D z))\Bigr] + \frac{\sigma^2}{2}\,\bigl(\ent(\nu^{1:N}) - N\ent(\pi) \bigr) \\
        &\qquad = -\frac{\sigma^2}{2} \E_{x^{1:N} \sim \nu^{1:N}}\Bigl[N \int \log \pi(z)\,\rho_{x^{1:N}}(\D z)\Bigr] + \frac{\sigma^2}{2}\ent(\nu^{1:N}) \\
        &\qquad = -\frac{\sigma^2}{2} \int \sum_{i=1}^N \log \pi(x^i) \, \nu^{1:N}(\D x^{1:N}) + \frac{\sigma^2}{2} \ent(\nu^{1:N})\,. 
    \end{align*}
    However, this is just $\frac{\sigma^2}{2} \KL(\nu^{1:N} \mmid \pi^{\otimes N})$, so we are done.
\end{proof}

\begin{proofof}{Theorem~\ref{thm:entropic_prop_chaos_general_functional}}
    We bound $\mc E^N(\mu^{1:N}) - N\mc E^N(\pi)$ via the following argument. First, define the finite-particle mean-field functional as $\mc F^N(\nu^{1:N}) = N \int \mc F(\rho_{x^{1:N}}) \, \nu^{1:N}(\D x^{1:N})$. In the sequel, we also use the following notation for conditional measures: if $x^{-i} \deq  (x^{1:i-1}, x^{i+1:N})\in \R^{d\times(N-1)}$,
\[
\mu^{1:N}(x^{1:N}) = \mu^{i|-i}(x^i\mid x^{-i}) \times \mu^{-i}(x^{-i})\,.
\] 
    We know that
    \begin{align*}
        \mc E^N(\mu^{1:N}) - N \mc E(\pi) = \mc F^N(\mu^{1:N}) - N \mc F(\pi) + \frac{\sigma^2}{2} \ent(\mu^{1:N}) - \frac{N \sigma^2}{2} \ent(\pi)\,.
    \end{align*}
    Furthermore, by Assumption~\ref{as:functional_convexity},
    \begin{align*}
        \mc F^N(\mu^{1:N}) - N \mc F(\pi)  \leq N \E_{x^{1:N} \sim \mu^{1:N}}\int \delta \mc F(\rho_{x^{1:N}}, z)\,(\rho_{x^{1:N}}(\D z) - \pi(\D z))\,.
    \end{align*}

    Using the subadditivity of entropy, we can therefore write
    \begin{align*}
        \mc E^N(\mu^{1:N}) - N\mc E(\pi)
        \le \sum_{i=1}^N \E_{x^{1:N} \sim \mu^{1:N}}\Bigl[&\delta \mc F(\rho_{x^{1:N}}, x^i) - \int \delta \mc F(\rho_{x^{1:N}}, \cdot)\, \D \pi \\
        &\qquad{} + \frac{\sigma^2}{2} \,\bigl(\ent(\mu^{i\mid -i}(\cdot \mid x^{-i})) - \ent(\pi)\bigr)\Bigr]\,.
    \end{align*}
    To decouple the terms, we now replace each $\delta \mc F(\rho_{x^{1:N}}, \cdot)$ term with $\delta \mc F(\rho_{x^{-i}}, \cdot)$:
    \begin{align*}
        &\mc E^N(\mu^{1:N}) - N\mc E(\pi) \\
        &\qquad \le \underbrace{\sum_{i=1}^N \E_{x^{1:N} \sim \mu^{1:N}}\Bigl[\delta \mc F(\rho_{x^{-i}}, x^i) - \int \delta \mc F(\rho_{x^{-i}}, \cdot)\, \D \pi + \frac{\sigma^2}{2} \,\bigl(\ent(\mu^{i\mid -i}(\cdot \mid x^{-i})) - \ent(\pi)\bigr)\Bigr]}_{\msf A} \\
        &\qquad\quad{} + {\underbrace{\sum_{i=1}^N \E_{x^{1:N}\sim \mu^{1:N}}\Bigl[ \delta \mc F(\rho_{x^{1:N}}, x^i) - \delta \mc F(\rho_{x^{-i}}, x^i) - \int \bigl(\delta \mc F(\rho_{x^{1:N}}, \cdot) - \delta \mc F(\rho_{x^{-i}}, \cdot)\bigr)\,\D \pi \Bigr]}_{\msf B}}\,.
    \end{align*}
    We consider the two terms in turn, beginning with the first.
    
    Note that by Fubini's theorem,
    \begin{align*}
        \E_{x^{1:N} \sim \mu^{1:N}}\delta \mc F(\rho_{x^{-i}}, x^i)
        &= \E_{x^{-i} \sim \mu^{-i}} \int \delta \mc F(\rho_{x^{-i}}, \cdot) \,\D\mu^{i\mid -i}(\cdot \mid x^{-i})\,.
    \end{align*}
    In order to relate the first term $\msf A$ to a KL divergence, for each $x^{-i} \in \R^{d\times (N-1)}$ we introduce the probability measure $\tau_{x^{-i}} \in \Pac$ via
    \begin{align*}
        \tau_{x^{-i}} \propto \exp\Bigl(-\frac{2}{\sigma^2}\,\delta\mc F(\rho_{x^{-i}}, \cdot)\Bigr)\,.
    \end{align*}
    We can compute
    \begin{align*}
        &\KL\bigl(\mu^{i\mid -i}(\cdot \mid x^{-i}) \bigm\Vert \tau_{x^{-i}}\bigr) \\
        &\qquad = \int \Bigl( \frac{2}{\sigma^2}\,\delta \mc F(\rho_{x^{-i}}, \cdot) + \log\mu^{i \mid -i}(\cdot \mid x^{-i})\Bigr) \,\D \mu^{i\mid -i}(\cdot \mid x^{-i}) + \log Z(\tau_{x^{-i}})\,,
    \end{align*}
    where $Z(\tau_{x^{-i}})$ is the normalization constant for $\tau_{x^{-i}}$,
    \begin{align*}
        \log Z(\tau_{x^{-i}})
        &= \log \int \exp \Bigl(-\frac{2}{\sigma^2}\,\delta \mc F(\rho_{x^{-i}}, z) \Bigr) \, \D z\\
        &= \log \int \exp\Bigl(\frac{2}{\sigma^2}\,\bigl(\delta \mc F(\pi, z) - \delta \mc F(\rho_{x^{-i}}, z)\bigr)\Bigr) \, \pi(\D z) + \log Z(\pi) \\
        &\geq -\frac{2}{\sigma^2}\int\delta \mc F(\rho_{x^{-i}}, \cdot) \, \D \pi -\ent(\pi)\,.
    \end{align*}
    Upon taking expectations, we obtain
    \begin{align*}
        \msf A
        &\le \frac{\sigma^2}{2} \sum_{i=1}^N \E_{x^{-i} \sim \mu^{-i}} \KL\bigl(\mu^{i\mid -i}(\cdot \mid x^{-i}) \bigm\Vert \tau_{x^{-i}}\bigr)\,.
    \end{align*}
    Moreover, we can recognize that $\tau_{x^{-i}}$ is a proximal Gibbs measure. By Assumptions~\ref{as:functional_smoothness} and~\ref{as:prox_gibbs_lsi},
    \begin{align*}
        \msf A
        &\le \frac{\overline C_{\msf{LSI}}\, \sigma^2}{4} \sum_{i=1}^N \E_{x^{-i} \sim \mu^{-i}} \FI\bigl(\mu^{i\mid -i}(\cdot \mid x^{-i}) \bigm\Vert \tau_{x^{-i}}\bigr) \\
        &= \frac{\overline C_{\msf{LSI}}\, \sigma^2}{4} \sum_{i=1}^N \E_{x^{1:N} \sim \mu^{1:N}}\Bigl[\Bigl\lVert\nabla_{x^i} \log \mu^{i \mid -i}(x^i \mid x^{-i}) + \frac{2}{\sigma^2}\,\nabla_{\mc W_2} \mc F(\rho_{x^{-i}}, x^i)\Bigr\rVert^2\Bigr] \\
        &= \frac{\overline C_{\msf{LSI}}\, \sigma^2}{4} \sum_{i=1}^N \E_{x^{1:N} \sim \mu^{1:N}}\Bigl[\Bigl\lVert\nabla_{x^i} \log \mu^{1:N}(x^{1:N}) + \frac{2}{\sigma^2}\,\nabla_{\mc W_2} \mc F(\rho_{x^{-i}}, x^i)\Bigr\rVert^2\Bigr] \\
        &= \frac{\overline C_{\msf{LSI}}}{\sigma^2} \sum_{i=1}^N \E_{x^{1:N} \sim \mu^{1:N}}[\norm{\nabla_{\mc W_2} \mc F(\rho_{x^{1:N}}, x^i) - \nabla_{\mc W_2} \mc F(\rho_{x^{-i}}, x^i)}^2] \\
        &= \frac{\overline C_{\msf{LSI}}}{\sigma^2} \sum_{i=1}^N \E_{x^{1:N} \sim \mu^{1:N}}[\norm{\nabla_{\mc W_2} \mc F_0(\rho_{x^{1:N}}, x^i) - \nabla_{\mc W_2} \mc F_0(\rho_{x^{-i}}, x^i)}^2] \\
        &\le \frac{\beta^2 \overline C_{\msf{LSI}}}{\sigma^2} \sum_{i=1}^N \E_{x^{1:N} \sim \mu^{1:N}}\mc W_1^2(\rho_{x^{1:N}}, \rho_{x^{-i}})\,.
    \end{align*}
    To transport the mass from $\rho_{x^{1:N}}$ to $\rho_{x^{-i}}$, we take the transport plan which moves $\frac{1}{N\,(N-1)}$ of the mass from $x^i$ to each $x^j$, $j\ne i$. It yields
    \begin{align}\label{eq:W1_bd}
        \mc W_1(\rho_{x^{1:N}}, \rho_{x^{-i}})
        &\le \frac{1}{N\,(N-1)} \sum_{\substack{j=1 \\ j \ne i}}^N {\norm{x^i - x^j}}\,.
    \end{align}
    Hence,
    \begin{align*}
        \msf A
        &\le \frac{\beta^2 \overline C_{\msf{LSI}}}{\sigma^2 N^2\,{(N-1)}^2} \E_{x^{1:N} \sim \mu^{1:N}}\sum_{i=1}^N \Bigl(\sum_{\substack{j=1 \\ j \ne i}}^N {\norm{x^i - x^j}}\Bigr)^2 \\
        &\le \frac{\beta^2 \overline C_{\msf{LSI}}}{\sigma^2 N^2\,(N-1)} \E_{x^{1:N} \sim \mu^{1:N}} \sum_{i\ne j} {\norm{x^i - x^j}^2}
        = \frac{\beta^2 \overline C_{\msf{LSI}}}{\sigma^2 N} \E_{x^{1:2} \sim \mu^{1:2}}[\norm{x^1 - x^2}^2]\,.
    \end{align*}
    We then use the inequality
    \begin{align}
        \frac{1}{2}\E_{x^{1:2}\sim\mu^{1:2}}[\norm{x^1-x^2}^2]
        &\le 2\mc W_2^2(\mu^{1:2}, \pi^{\otimes 2}) + \E_{x^{1:2} \sim \pi^{\otimes 2}}[\norm{x^1-x^2}^2]\nonumber \\
        &\le \frac{4}{N}\,\mc W_2^2(\mu^{1:N}, \pi^{\otimes N}) + 2 \E_{x\sim \pi}[\norm{x - \E x}^2]\nonumber \\
        &\le \frac{8\overline C_{\msf{LSI}}}{N}\KL(\mu^{1:N} \mmid \pi^{\otimes N}) + 2d \overline C_{\msf{LSI}}\,,\label{eq:var_under_mu}
    \end{align}
    where we used Lemma~\ref{lem:wass_exchangeable} and the Poincar\'e inequality for $\pi$.
    Hence,
    \begin{align*}
        \msf A
        &\le \frac{2\beta^2 \overline C_{\msf{LSI}}}{\sigma^2 N}\,\Bigl( \frac{8\overline C_{\msf{LSI}}}{N}\KL(\mu^{1:N} \mmid \pi^{\otimes N}) + 2d \overline C_{\msf{LSI}}\Bigr)\,.
    \end{align*}

    Next, we turn toward term $\msf B$.
    First, define a function $\zeta_{x^{1:N}}^i: \R^d \to \R$ by
    \begin{align*}
        \zeta_{x^{1:N}}^i(y)
        \deq \delta \mc F(\rho_{x^{-i}}, y) - \delta \mc F(\rho_{x^{1:N}}, y)
        = \delta \mc F_0(\rho_{x^{-i}}, y) - \delta \mc F_0(\rho_{x^{1:N}}, y)\,.
    \end{align*}
    It is clear from Assumption~\ref{as:functional_smoothness} that this function is Lipschitz with constant $2\beta \mc W_1(\rho_{x^{1:N}}, \rho_{x^{-i}})$. Thus, we obtain using this Lipschitzness,~\eqref{eq:W1_bd}, and Young's inequality,
    \begin{align*}
        \msf B
        &= \sum_{i=1}^N \E_{x^{1:N} \sim \mu^{1:N}} \int \bigl( \zeta_{x^{1:N}}^i(x^i) - \zeta_{x^{1:N}}^i(z)\bigr) \, \pi(\D z) \\
        &\leq \sum_{i=1}^N \E_{x^{1:N} \sim \mu^{1:N}} \int \frac{2\beta}{N\,(N-1)} \sum_{\substack{j=1 \\ j \neq i}}^N {\norm{x^j - x^i}\, \norm{x^i-z}} \, \pi(\D z) \\
        &\leq \frac{\beta}{N\,(N-1)} \E_{x^{1:N} \sim \mu^{1:N}}\sum_{i\ne j} {\norm{x^i - x^j}^2} + \frac{\beta}{N} \sum_{i=1}^N \E_{(x^i,z)\sim \mu^1\otimes \pi}[\norm{x^i -z}^2] \\
        &= \beta \E_{x^{1:2} \sim \mu^{1:2}}[{\norm{x^1 - x^2}^2}] + \beta \E_{(x,z)\sim \mu^1\otimes \pi}[\norm{x -z}^2]\,.
    \end{align*}
    For the first term, we can apply~\eqref{eq:var_under_mu}, and for the second term, we can apply~\eqref{eq:second_moment_bd}.
    It yields
    \begin{align*}
        \msf B
        &\le \frac{20\beta \overline C_{\msf{LSI}}}{N}\KL(\mu^{1:N} \mmid \pi^{\otimes N}) + 8\beta \overline C_{\msf{LSI}} d\,.
    \end{align*}
    Putting the bounds together with Lemma~\ref{lem:entropy_toast},
    \begin{align*}
        \KL(\mu^{1:N} \mmid \pi^{\otimes N})
        &\le \frac{33\beta \overline C_{\msf{LSI}} d}{\sigma^2}
    \end{align*}
    for all $N \ge 160\beta \overline C_{\msf{LSI}}/\sigma^2$.
    The result for $k\le N$ follows from Lemma~\ref{lem:information_ineq}.
\end{proofof}

\section{Isoperimetric Results for the Stationary Distributions}
\label{app:iso_inv_measure}

\subsection{Convexity and Smoothness}

Here, we verify the convexity and smoothness properties of $\mu^{1:N}$ in the pairwise McKean{--}Vlasov setting.

\medskip{}

\begin{proofof}{Lemma~\ref{lem:strongCvxCase}}
    For $x^{1:N} = [x^1,\dotsc,x^N] \in \R^{d\times N}$, the Hessian of $\log(1/\mu^{1:N})$ can be explicitly computed as
    \begin{align*}
        &-\frac{\sigma^2}{2}\, \nabla^2 \log \mu^{1:N}(x^{1:N}) =\begin{bmatrix}
            \nabla^2 V(x^{1}) & 0 & \cdots & 0 \\
            0 & \nabla^2 V(x^{2}) & \cdots & 0 \\
            \vdots & \vdots & \ddots & \vdots \\
            0 & 0 & \cdots & \nabla^2 V(x^N)
        \end{bmatrix} \\
        &\quad + \frac{1}{N-1}\,
        \underbrace{\begin{bmatrix}
            \sum_{j=2}^N \nabla^2 W(x^{1}- x^j) & -\nabla^2 W(x^1 - x^2) & \cdots & -\nabla^2 W(x^{1} - x^N) \\
            -\nabla^2 W(x^2 - x^1) & \sum_{\substack{j=1 \\ j \neq 2}}^N \nabla^2 W(x^2 - x^j) & \cdots & -\nabla^2 W(x^{2} - x^N) \\
            \vdots & \vdots & \ddots & \vdots \\
            -\nabla^2 W(x^N - x^1) & -\nabla^2 W(x^N - x^2) & \cdots & \sum_{j=1}^{N-1} \nabla^2 W(x^N - x^j)
        \end{bmatrix}}_{=\msf B}\,.
    \end{align*}
    Clearly, the first block matrix has eigenvalues between $\alpha_V$ and $\beta_V$.
    For the second block matrix $\msf B$, let us denote $A_{i,j}  \deq  \nabla^2 W(x^i - x^j)$ for $i,j\in [N]$. Note that $A_{i,j} = A_{j,i}$ since $W$ is even, and each $A_{i,j}$ is clearly symmetric. 
    
    For $\msf B \in \R^{dN \times dN}$ the second matrix and $y = [y^1,\dotsc,y^N]\in \R^{dN}$, we have 
    \begin{align*}
        y^\T \msf B y 
        &= \sum_{i\leq N} y_i^\T \,\Bigl(\sum_{j\in[N]\backslash i} A_{i,j}\Bigr)\, y_i
        -\sum_{\substack{i,j\leq N\\i\neq j}} y_i^\T  A_{i,j} y_j\\
        &=\sum_{\substack{i,j\leq N\\i < j}} \bigl(y_i^\T A_{i,j}y_i +y_j^\T A_{j,i}y_j - y_i^\T A_{i,j} y_j - y_j^\T A_{j,i}y_i\bigr)
        =\sum_{\substack{i,j\leq N\\i < j}} (y_i - y_j)^\T A_{i,j} (y_i - y_j)\,.
    \end{align*}
    Using $\alpha_W I_d \preceq A_{i,j} \preceq \beta_W I_d$ and
    \begin{align*}
        \msf M  \deq  \nabla^2_y \sum_{\substack{i,j\leq N\\i < j}} \norm{y_i - y_j}^2
        &=  2\,\begin{bmatrix}
            N-1  & -1 & \cdots & -1 \\
            -1 & N-1 & \cdots &  -1 \\
            \vdots & \vdots & \ddots & \vdots \\
            -1 & -1 & \cdots & N-1
        \end{bmatrix} \otimes I_{d}\,,
    \end{align*}
    we have $\frac 1 2\,\alpha_W \msf M \preceq \msf B \preceq \frac 1 2\,\beta_W \msf M$.
    Since the circulant matrix in $\msf M$ is PSD due to diagonal dominance and its largest eigenvalue is at most $N$, it follows that the eigenvalues of $\msf M$ lie between $0$ and $2N$.
    Hence, the eigenvalues of $\msf B$ lie in the interval $[\frac{N}{N-1}\,{\alpha_W^-}, \frac{N}{N-1}\,\beta_W]$.
\end{proofof}

\subsection{Bounded Perturbations}

In this section, we prove the isoperimetric results from \S\ref{sec:mkv_isoperimetry}. 
We again introduce the conditional measure: if $x^{-i} \deq  (x^{1:i-1}, x^{i+1:N})\in \R^{d\times(N-1)}$ we define 
\[
\mu^{1:N}(x^{1:N}) = \mu^{i|-i}(x^i\mid x^{-i}) \times \mu^{-i}(x^{-i})
\] 
for the conditional distribution of the $i$-th particle and the distribution of an $N$-particle system with the $i$-th particle marginalized out.

Before proceeding to the proof of Lemma~\ref{lem:bdd_perturb_meanfield}, we first state a result on log-Sobolev inequalities under weak interactions due to~\cite{otto2007new}.

\begin{lemma}[{\cite[Theorem 1]{otto2007new}}]\label{lem:otto-rez}
    Consider a measure $\mu^{1:N}$ on $\R^{d \times N}$, with conditional measures $\mu^{i|-i}$. Assume that
    \begin{align*}
        C_{\msf{LSI}}(\mu^{i|-i}(\cdot\mid x^{-i})) &\leq \frac{1}{\tau_i}\,, \qquad \text{for all $i \in [N]$, $x^{-i} \in \R^{d \times (N-1)}$}\,, \\
        \norm{\nabla_{x^i} \nabla_{x^j} \log \mu^{1:N}(x^{1:N})} &\leq \beta_{i,j}\,, \qquad \text{for all $x^{1:N} \in \R^{d \times N}$, $i, j \in [N]$, $i \neq j$}\,.
    \end{align*}
    Then, consider the matrix $A \in \R^{N \times N}$ with entries $A_{i,i} = \tau_i$, $A_{i,j} = -\beta_{i,j}$ for $i \neq j$. If $A \succeq \rho I_{N}$, then $\mu^{1:N}$ satisfies~\eqref{eq:lsi} with constant $1/\rho$.
\end{lemma}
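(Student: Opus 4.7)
The plan is the classical two-scale strategy: combine \eqref{eq:lsi} for each conditional $\mu^{i\mid -i}$ with the cross-derivative bounds $\beta_{i,j}$ into a coupled matrix inequality, and then diagonalize using the spectral assumption $A\succeq \rho I_N$ to extract the LSI constant $1/\rho$.

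Fix a smooth positive test function $f:\R^{d\times N}\to\R$. The hypothesis gives, for every $i\in[N]$ and $x^{-i}\in\R^{d\times(N-1)}$,
\[
    \mathrm{Ent}_{\mu^{i\mid -i}(\cdot\mid x^{-i})}(f^2)\le \frac{2}{\tau_i}\,\E_{\mu^{i\mid -i}(\cdot\mid x^{-i})}[\norm{\nabla_{x^i}f}^2]\,.
\]
The total entropy is then decomposed via the chain rule $\mathrm{Ent}_{\mu^{1:N}}(f^2)=\E_{\mu^{-i}}\mathrm{Ent}_{\mu^{i\mid -i}}(f^2)+\mathrm{Ent}_{\mu^{-i}}(g_i^2)$ with $g_i(x^{-i})\deq(\E_{\mu^{i\mid -i}(\cdot\mid x^{-i})}[f^2])^{1/2}$, and this identity is applied inductively across a chosen ordering of the coordinates. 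At each recursion step the conditional LSI handles the inner entropy; the outer entropy of $g_i^2$ is passed to the next level, where one needs a gradient estimate on $g_i$ in order to feed the next conditional LSI.

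The technical heart of the argument is bounding the partial gradients of $g_i$ in coordinates $x^j$ with $j\ne i$. Differentiating under the integral sign and using that $\mu^{i\mid -i}(\cdot\mid x^{-i})\propto \mu^{1:N}(\cdot,x^{-i})$, the $x^j$-derivative of $\log \mu^{i\mid -i}$ equals $\nabla_{x^j}\log\mu^{1:N}$ up to a term depending only on $x^{-i}$, and its $x^i$-derivative has operator norm at most $\beta_{i,j}$. Combining this with a Cauchy--Schwarz estimate against $\mu^{i\mid -i}$ (using the conditional Poincar\'e inequality implied by the conditional LSI) yields a bound of the schematic form
\[
    \norm{\nabla_{x^j} g_i}_{L^2(\mu^{-i})} \lesssim \E_{\mu^{1:N}}[\norm{\nabla_{x^j}f}^2]^{1/2} + \frac{\beta_{i,j}}{\tau_i}\,\E_{\mu^{1:N}}[\norm{\nabla_{x^i}f}^2]^{1/2}\,,
\]
in which the off-diagonal entries of $A$ appear naturally.

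Introducing the vector $v\in\R^N$ with $v_i\deq(\E_{\mu^{1:N}}[\norm{\nabla_{x^i}f}^2])^{1/2}$, the accumulated estimates collapse to an inequality of the shape $\tfrac12\,\mathrm{Ent}_{\mu^{1:N}}(f^2) \le v^\T A^{-1}v$, and the hypothesis $A\succeq \rho I_N$ yields $v^\T A^{-1} v\le \rho^{-1}\norm{v}^2 = \rho^{-1}\E_{\mu^{1:N}}[\norm{\nabla f}^2]$, which is exactly \eqref{eq:lsi} with $C_{\msf{LSI}}(\mu^{1:N})\le 1/\rho$. The main obstacle is the third step: the constants in the cross-gradient estimate must align precisely with the entries of $A$ so that the quadratic form closes without loss of a constant factor. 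An alternative that sidesteps the recursive chain-rule bookkeeping is to run a Bakry--\'Emery-type argument directly on the Markov semigroup of $\mu^{1:N}$ using a carr\'e du champ weighted by the Perron eigenvector of $A$; this bypasses the explicit handling of $g_i$ but still relies on exactly the same conditional LSIs and mixed-Hessian bounds, now packaged as a weighted gradient estimate on the semigroup.
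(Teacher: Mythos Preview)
The paper does not supply its own proof of this lemma; it is quoted verbatim as \cite[Theorem 1]{otto2007new} and then invoked as a black box in the proof of Lemma~\ref{lem:bdd_perturb_meanfield}. So there is nothing in the paper to compare your attempt against.

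That said, your sketch does follow the two-scale strategy of the original Otto--Reznikoff argument: conditional LSIs for the single-site measures, cross-gradient estimates driven by the mixed-Hessian bounds $\beta_{i,j}$, and a final matrix inequality closed via the spectral hypothesis $A\succeq \rho I_N$. The place you correctly flag as the ``main obstacle'' is exactly where the real work lies. Your displayed collapse $\tfrac12\,\mathrm{Ent}_{\mu^{1:N}}(f^2)\le v^\T A^{-1}v$ is too optimistic as written: in the actual proof one does not obtain a clean quadratic form in $v$ from a single recursion, but rather a coupled system of inequalities (one per site) that, when assembled, is dominated componentwise by $A^{-1}$ acting on the gradient-energy vector; positivity of $A$ is what allows this system to be inverted without loss. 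The alternative semigroup route you mention at the end is closer in spirit to how Otto and Reznikoff actually organize the argument, via interpolation along the single-site Glauber dynamics rather than a bare inductive chain rule. Either way, the plan is sound but the sketch as it stands would not compile into a proof without substantial additional detail in the coupling step.
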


\medskip{}

\begin{proofof}{Lemma~\ref{lem:bdd_perturb_meanfield}}
    We begin by proving the statement about $\pi$. The potential of the invariant measure $\pi$ can also be written as
    \begin{align*}
        \log\frac{1}{\pi(x)}
        &= \frac{2}{\sigma^2}\, \Bigl(V_0(x) + V_1(x) + \int \bigl(W_0(x-\cdot) + W_1(x-\cdot)\bigr) \,\D \pi \Bigr) \\
        &= \frac{2}{\sigma^2}\, \Bigl(V_0(x) + \int W_0(x-\cdot) \, \D \pi\Bigr) + \frac{2}{\sigma^2}\, \Bigl(V_1(x) + \int W_1(x-\cdot)\, \D \pi\Bigr)\,.
    \end{align*}
    This is the sum of a $\frac{2}{\sigma^2}\, (\alpha_{V_0} + \alpha_{W_0})$-convex function with a $\frac{2}{\sigma^2}\, (\osc(V_1) + \osc(W_1))$-bounded perturbation. Thus, $\pi$ satisfies \eqref{eq:lsi} with the claimed parameter.

We now prove the statement about $\mu^{1:N}$. 
Each conditional measure has a density of the form
\begin{align*}
    \mu^{i|-i}(\cdot\mid x^{-i}) \propto 
    \exp \Bigl(-\frac{2}{\sigma^2}\, V(\cdot) - \frac{2}{\sigma^2\, (N-1)} \sum_{j\in [N]\backslash i} W(\cdot - x^j)\Bigr)\,,
\end{align*}
where both $V$ and $W$ are bounded perturbations of $\alpha_{V_0}$, $\alpha_{W_0}$-strongly convex functions respectively, irrespective of the conditional variables.
Thus, by Holley--Stroock perturbation and the Bakry--\'Emery condition, each $\mu^{i | -i}(\cdot \mid x^{-i})$ satisfies \eqref{eq:lsi} with parameter
\begin{align*}
    \tau_i^{-1} \le \tau^{-1} \deq \frac{\sigma^2}{2}\,\bigl(\alpha_{V_0} + \frac{N}{N-1}\,{\alpha_{W_0}^-}\bigr)^{-1} \exp\Bigl(\frac{2}{\sigma^2}\, \bigl(\osc(V_1) + \osc(V_1)\bigr)\Bigr)\,.
\end{align*}
Secondly, we note that from~\eqref{eq:Nparticle_stat_interactive}, we have
\begin{align*}
    \frac{2}{\sigma^2\,(N-1)} \sup_{z \in \R^d} \norm{\nabla^2 W(z)}_{\msf{op}} \leq \frac{2\beta_W}{\sigma^2\, (N-1)} \eqqcolon \beta_{i,j}\,.
\end{align*}
Thus, we have
\begin{align*}
    A = \begin{bmatrix}
            \tau & -\frac{2\beta_W}{\sigma^2\, (N-1)} & \cdots & -\frac{2\beta_W}{\sigma^2\, (N-1)} \\[0.5em]
            -\frac{2\beta_W}{\sigma^2\, (N-1)} & \tau & \cdots & -\frac{2\beta_W}{\sigma^2\, (N-1)} \\[0.5em]
            \vdots & \vdots & \ddots & \vdots \\[0.5em]
            -\frac{2\beta_W}{\sigma^2\, (N-1)} & -\frac{2\beta_W}{\sigma^2\, (N-1)} & \cdots & \tau
        \end{bmatrix}\,.
\end{align*}
Under Assumption~\ref{as:bdd_perturb_VW}, this matrix is strictly diagonally dominant and has a minimum eigenvalue of at least $\tau/2$. We can now apply Lemma~\ref{lem:otto-rez} to complete the proof.
\end{proofof}

\subsection{Logarithmic Sobolev Inequalities via Perturbations}\label{scn:functional_lsi}

In this section, we state log-Sobolev inequalities for Lipschitz perturbations of strongly log-concave measures, which is used for the general McKean{--}Vlasov setting in \S\ref{sec:mfl}.

\begin{lemma}[{LSI under Lipschitz Perturbations~\cite[Theorem 1.4]{BriPed24HeatFlow}}]\label{lem:lip_perturb_lsi}

    Let $\mu \propto \exp(-H-V)$ for an $\alpha_V$-strongly convex function $V : \R^d\to\R$ and an $L$-Lipschitz function $H: \R^d \to \R$. Then, $\mu$ satisfies \eqref{eq:lsi} with constant $C_{\msf{LSI}}(\mu)$ given by
    \begin{align*}
        C_{\msf{LSI}}(\mu) \leq \frac{1}{\alpha}\exp\Bigl(\frac{L^2}{\alpha} + \frac{4L}{\sqrt\alpha}\Bigr)\,.
    \end{align*}
\end{lemma}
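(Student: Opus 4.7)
Since the lemma is stated as Theorem~1.4 of~\cite{BriPed24HeatFlow}, the plan is to briefly outline how the heat-flow argument of that paper yields the claimed bound rather than reprove it from scratch.

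The starting observation is that the reference measure $\nu \propto \exp(-V)$ is $\alpha_V$-strongly log-concave, so the Bakry{--}\'Emery criterion gives that $\nu$ satisfies~\eqref{eq:lsi} with constant $1/\alpha_V$. The difficulty is that the Lipschitz perturbation $H$ is in general unbounded, so the classical Holley{--}Stroock perturbation bound---which would introduce a factor $\exp(\osc H)$---does not apply directly, and a more refined argument is needed to obtain a perturbation bound that depends only on the Lipschitz constant $L$.

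The first ingredient is sub-Gaussian concentration of $H$ under $\nu$. Herbst's argument, applied to~\eqref{eq:lsi} for $\nu$ with constant $1/\alpha_V$, yields
\[
  \E_\nu \exp\bigl(\lambda\,(H - \E_\nu H)\bigr) \leq \exp\Bigl(\frac{\lambda^2 L^2}{2\alpha_V}\Bigr) \qquad \text{for all } \lambda \in \R\,,
\]
and in particular the normalizing constant $Z = \int \exp(-H)\,\D\nu$ admits the two-sided bound $\log Z = -\E_\nu H + O(L^2/\alpha_V)$. These exponential moments control the Radon{--}Nikodym derivative $\D\mu/\D\nu$ in an averaged sense suitable for the heat-flow argument. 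The second ingredient is heat-flow regularization: letting $(P_t)_{t\ge 0}$ denote the Langevin semigroup associated with $\nu$, the $\msf{CD}(\alpha_V,\infty)$ curvature bound for $\nu$ yields the gradient contraction $\operatorname{Lip}(P_t\psi) \leq e^{-\alpha_V t}\operatorname{Lip}(\psi)$, which regularizes the Lipschitz perturbation at an exponential rate. Following~\cite{BriPed24HeatFlow}, one considers, for an arbitrary test density against $\mu$, the entropy and Fisher information along the trajectory of $(P_t)$ and integrates the curvature-weighted dissipation identity over $t \in [0,\infty)$. The resulting comparison between $\KL(\cdot\mmid\mu)$ and $\FI(\cdot\mmid\mu)$ picks up an exponential factor whose logarithm combines (i) a second-order term $L^2/\alpha_V$ inherited from sub-Gaussian concentration of $H$, and (ii) a first-order term $4L/\sqrt{\alpha_V}$ coming from the renormalization of the change of measure.

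The main obstacle is the bookkeeping required to obtain the explicit exponent $L^2/\alpha_V + 4L/\sqrt{\alpha_V}$ rather than a cruder universal upper bound. Qualitatively, the fact that LSI survives Lipschitz perturbations of strongly log-concave measures is classical (Aida{--}Shigekawa, Cattiaux{--}Guillin, Wang); the value added by~\cite{BriPed24HeatFlow} is the sharper scaling in $L$, which is essential in our application because the downstream constants (e.g., in Corollary~\ref{cor:unif_LSI_crazy}) must remain uniform in the number of particles $N$.
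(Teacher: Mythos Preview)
The paper does not prove this lemma; it is quoted directly from \cite[Theorem~1.4]{BriPed24HeatFlow} without argument. So the relevant comparison is between your sketch and the actual method of that reference, and here there is a genuine mismatch.

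Your outline describes an entropy/Fisher-information dissipation computation along the Langevin semigroup $(P_t)_{t\ge 0}$ associated with $\nu \propto \exp(-V)$, combined with Herbst concentration for $H$. That is essentially the classical Aida{--}Shigekawa / Cattiaux{--}Guillin strategy you allude to at the end, and while it does yield \emph{some} LSI for $\mu$, it is not the argument in \cite{BriPed24HeatFlow} and does not obviously produce the specific exponent $L^2/\alpha + 4L/\sqrt\alpha$. The approach of \cite{BriPed24HeatFlow} is instead via \emph{Lipschitz transport maps}: one constructs the Kim{--}Milman map $T$ from the standard Gaussian $\gamma$ to $\mu$ via the reverse heat (Ornstein{--}Uhlenbeck) flow, and bounds its Lipschitz constant by estimating $\nabla^2 \log Q_t(\D\mu/\D\gamma)$ along the OU semigroup $(Q_t)_{t\ge 0}$. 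These Hessian estimates reduce to operator-norm bounds on the covariance of Gaussian tilts of $\mu$; see Lemma~\ref{lem:lip_transport} in the present paper, whose proof follows exactly these calculations. Once $T$ is shown to be $L'$-Lipschitz, the LSI for $\mu$ follows from that of $\gamma$ by pushforward, giving $C_{\msf{LSI}}(\mu) \le (L')^2$; the quoted exponent arises from integrating the pointwise Hessian bound in $t$, not from combining Herbst with a dissipation identity. If you want to sketch the actual proof, the correct ingredients are tilt-stability and the reverse-OU transport construction, not the forward Langevin semigroup for $\nu$.
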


From this one derives the following log-Sobolev inequality for the proximal Gibbs measure.

\begin{lemma}[{Uniform LSI for the Proximal Gibbs Measure~\cite[Theorem 1]{suzuki2023convergence}}]\label{lem:uniform_lsi}

    For the proximal Gibbs measure~\eqref{eq:Nparticle_stat} in the setting of \S\ref{sec:mfl}, under Assumption~\ref{as:functional_bdd_grad}, 
    we have that
    \begin{align*}
        \sup_{\mu \in \mc{P}_2(\R^d)} C_{\msf{LSI}}(\pi_\mu) \leq \overline{C}_{\msf{LSI}}\,,
    \end{align*}
    where $\alpha$ can be bounded by
    \begin{align*}
        \overline{C}_{\msf{LSI}}
        &\le \frac{\sigma^2}{2\lambda} \exp\Bigl(\frac{2B^2}{\lambda \sigma^2} + \frac{8B}{\sqrt{2\lambda}\,\sigma}\Bigr)\,.
    \end{align*}
\end{lemma}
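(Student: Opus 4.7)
The plan is to obtain the uniform LSI by recognizing $\pi_\mu$ as a Lipschitz perturbation of a strongly log-concave Gaussian-like measure and then directly invoking Lemma~\ref{lem:lip_perturb_lsi}. Thus there is essentially no analytic work beyond bookkeeping with the correct constants; the only ``content'' is to observe that Assumption~\ref{as:functional_bdd_grad} translates into a uniform Lipschitz bound on the non-quadratic part of $-\log \pi_\mu$, with constant independent of $\mu$.

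Concretely, recall from~\eqref{eq:mean_field_stat} that
\[
    \pi_\mu(x) \propto \exp\Bigl(-\underbrace{\frac{2}{\sigma^2}\,\delta\mc F_0(\mu,x)}_{\eqqcolon H_\mu(x)} - \underbrace{\frac{\lambda}{\sigma^2}\,\norm{x}^2}_{\eqqcolon V(x)}\Bigr)\,.
\]
First I would verify that $V$ is strongly convex with parameter $\alpha_V = \frac{2\lambda}{\sigma^2}$ (just reading off $\nabla^2 V = \frac{2\lambda}{\sigma^2}\,I$). Next, since $\nabla H_\mu(x) = \frac{2}{\sigma^2}\,\nabla_{\mc W_2}\mc F_0(\mu,x)$, Assumption~\ref{as:functional_bdd_grad} gives $\norm{\nabla H_\mu(x)} \le \frac{2B}{\sigma^2}$ uniformly in both $\mu \in \mc P_2(\R^d)$ and $x \in \R^d$, so $H_\mu$ is $L$-Lipschitz with $L = \frac{2B}{\sigma^2}$.

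With these two constants identified, Lemma~\ref{lem:lip_perturb_lsi} applied to $\pi_\mu \propto \exp(-H_\mu - V)$ yields
\[
    C_{\msf{LSI}}(\pi_\mu)
    \le \frac{1}{\alpha_V}\exp\Bigl(\frac{L^2}{\alpha_V} + \frac{4L}{\sqrt{\alpha_V}}\Bigr)
    = \frac{\sigma^2}{2\lambda}\exp\Bigl(\frac{2B^2}{\lambda\sigma^2} + \frac{8B}{\sqrt{2\lambda}\,\sigma}\Bigr)\,,
\]
after plugging in $\alpha_V = \frac{2\lambda}{\sigma^2}$ and $L = \frac{2B}{\sigma^2}$ and simplifying. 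Since the right-hand side has no $\mu$-dependence, taking the supremum over $\mu \in \mc P_2(\R^d)$ gives the claimed bound on $\overline C_{\msf{LSI}}$.

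There is no real obstacle here: the whole argument is a one-line application of a black-box perturbation LSI (Lemma~\ref{lem:lip_perturb_lsi}). The only subtlety worth double-checking is the factor of $2$ coming from $\nabla^2 V = \frac{2\lambda}{\sigma^2}\,I$ rather than $\frac{\lambda}{\sigma^2}\,I$, and the fact that $\delta\mc F_0(\mu,\cdot)$ is defined only up to an additive constant (which does not affect the gradient, the Lipschitz constant, or the normalized density $\pi_\mu$). Note also that convexity of $\mc F_0$ (Assumption~\ref{as:functional_convexity}) is not needed; only the gradient bound enters.
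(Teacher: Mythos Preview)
Your proposal is correct and matches the paper's approach exactly: the paper derives Lemma~\ref{lem:uniform_lsi} as an immediate consequence of Lemma~\ref{lem:lip_perturb_lsi} (``From this one derives the following log-Sobolev inequality for the proximal Gibbs measure''), and your bookkeeping of the constants $\alpha_V = 2\lambda/\sigma^2$ and $L = 2B/\sigma^2$ is precisely what is needed.
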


Obtaining a uniform-in-$N$ LSI for $\mu^{1:N}$ under Assumption~\ref{as:functional_bdd_grad} is more difficult, and we rely on the recent heat flow estimates of~\cite{BriPed24HeatFlow}.
In their work, the authors showed the existence of an $L$-Lipschitz transport map---the Kim--Milman map~\cite{KimMil12ReverseHeat}---from the standard Gaussian measure $\gamma$ to a measure $\mu$, under suitable assumptions on $\mu$.
By~\cite[Proposition 5.4.3]{bakry2014analysis}, this immediately implies that $\mu$ satisfies $C_{\msf{LSI}}(\mu) \le L^2$.
The existence of the Lipschitz transport map is based on estimates along the heat flow, and we summarize the computation in a convenient form based on bounding the operator norm of the covariance matrix of \emph{Gaussian tilts} of the measure.
The latter property is sometimes called \emph{tilt stability} in the literature.
Note that we do not attempt to optimize constants here.

\begin{lemma}[Lipschitz Transport Maps via Reverse OU]\label{lem:lip_transport}
    Let $\mu$ be a probability measure over $\R^d$ and for any $t > 0$, $y\in\R^d$, let $\mu_{t,y}$ denote the Gaussian tilt,
    \begin{align}\label{eq:gaussian_tilt}
        \mu_{t,y}(\D x) \propto \exp\Bigl(-\frac{\norm{y-x}^2}{2t} + \frac{\norm x^2}{2}\Bigr)\,\mu(\D x)\,,
    \end{align}
    where we assume that this defines a valid probability measure for all $t > 0$ and $y\in\R^d$.
    Suppose there exist $a, C > 0$ such that the following ``tilt stability'' property holds:
    \begin{align*}
        \norm{\cov_{\mu_{t,y}}}_{\msf{op}}
        &\le \Bigl(\frac{1}{\sqrt{a+1/t}} + \frac{C}{a+1/t} \Bigr)^2\,, \qquad\text{for all}~t > 0\,,\; y \in\R^d\,.
    \end{align*}
    Then, there exists an $L$-Lipschitz transport map $T : \R^d\to\R^d$ such that $T_\# \gamma = \mu$, where $\gamma$ is the standard Gaussian measure and $L$ can be estimated by
    \begin{align*}
        L
        &\le \frac{1}{\sqrt{1+a}}\exp\Bigl(\frac{C^2}{2\,(1+a)} + \frac{2C}{\sqrt{1+a}}\Bigr)\,.
    \end{align*}
\end{lemma}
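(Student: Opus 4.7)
The strategy is to invoke the Kim--Milman / Brigati--Pedrotti reverse-Ornstein--Uhlenbeck construction of a Lipschitz transport map from $\gamma$ to $\mu$, repackaging the Lipschitz estimate in terms of the tilt-stability hypothesis.

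First, I would set up the forward OU flow $X_s = e^{-s}X_0 + \sqrt{1-e^{-2s}}\,Z$ with $X_0 \sim \mu$ and $Z \sim \gamma$ independent, so that $\mu_s \deq \law(X_s) \to \gamma$ as $s \to \infty$. A direct Bayes computation using the Mehler kernel identifies the posterior of $X_0$ given $X_s = z$ with the Gaussian tilt $\mu_{t, e^{-s}z}$ at parameter $t = 1 - e^{-2s}$; in particular, $\cov[X_0 \mid X_s = z] = \cov_{\mu_{t, e^{-s}z}}$. Combined with the classical conditional-covariance identity for logarithmic derivatives of Gaussian convolutions, this yields the pointwise formula
\begin{equation*}
    \nabla^2 \log \mu_s(z) = -\tfrac{1}{t}\,I + \tfrac{1-t}{t^2}\,\cov_{\mu_{t, e^{-s}z}}\,,
\end{equation*}
so the hypothesis on $\cov_{\mu_{t,y}}$ translates directly into a pointwise upper bound on $\nabla^2 \log \mu_s$.

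Second, the Kim--Milman transport $T : \R^d \to \R^d$ with $T_\#\gamma = \mu$ is built as the limit of maps $T_s$ obtained by integrating the continuity equation for $(\mu_s)_{s \ge 0}$ associated with the velocity field $v_s(x) = -x - \nabla\log\mu_s(x)$. Its Jacobian $J_s = \nabla T_s$ satisfies a linear matrix ODE of the form $\partial_s J_s = M_s\,J_s$ with $M_s = -I - \nabla^2 \log \mu_s(T_s(\cdot))$, and a matrix Grönwall argument bounds $\lVert \nabla T \rVert_{\msf{op}}$ by the exponential of a time integral of (a suitable norm of) $M_s$. Substituting the pointwise Hessian bound from Step 1 and performing the time change $s \mapsto t = 1 - e^{-2s}$ (with $\D s = \tfrac{\D t}{2\,(1-t)}$), the $-I$ contribution from the OU drift partially cancels the $-\tfrac{1}{t}I$ term from the Hessian of score, producing the effective scalar coefficient $-\tfrac{a+1}{at+1}$ once the pure $1/(a+1/t)$ piece of the tilt bound is absorbed. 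Expanding the square $\bigl(\tfrac{1}{\sqrt{a+1/t}} + \tfrac{C}{a+1/t}\bigr)^2$ under the integral, the pure term integrates against the time change to give the prefactor $\tfrac{1}{\sqrt{1+a}}$---recovering the classical Caffarelli contraction at $C = 0$---while the cross and quadratic $C$-pieces integrate to the exponent $\tfrac{C^2}{2\,(1+a)} + \tfrac{2C}{\sqrt{1+a}}$, yielding the stated estimate.

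The main technical obstacle is not any single step but the bookkeeping needed to recover the \emph{sharp} exponent. A naive application of $(\alpha + \beta)^2 \le 2\,(\alpha^2 + \beta^2)$, or an application of Cauchy--Schwarz to combine the two $C$-pieces with the main piece, would degrade the exponent to something of order $(1+C)^2/(1+a)$ and spoil the correct constants $\tfrac{1}{2}$ and $2$. One must therefore keep the sum-of-squares structure intact, expand the square explicitly under the integral, and compute each of the three resulting integrals against the time-change Jacobian separately, carefully tracking the cancellation with the OU drift at the endpoint $t \to 1$ to ensure the apparent singularity of $-\tfrac{1}{t}$ at $t = 0$ is absorbed into the clean prefactor $\tfrac{1}{\sqrt{1+a}}$.
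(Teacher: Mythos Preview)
Your proposal is correct and follows essentially the same route as the paper's own proof: both invoke the Kim--Milman reverse-OU construction from \cite{BriPed24HeatFlow}, identify the relevant Hessian with the covariance of the Gaussian tilt, and then integrate in time to recover the Lipschitz constant. The paper phrases the first step as a bound on $\nabla^2 \log Q_s(\mu/\gamma)$ (the log density ratio under the OU semigroup), which differs from your $\nabla^2 \log \mu_s$ by exactly $+I$; this $I$ is precisely the drift term $-I$ in your $M_s$, so the two computations are the same up to bookkeeping. The paper then defers the time-integration step entirely to the calculations in \cite[Theorem 1.4]{BriPed24HeatFlow}, whereas you sketch that step out explicitly (time change, three-term expansion of the square, sharp constants), which is a welcome elaboration but not a different argument.
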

\begin{proof}
    We follow the calculations of~\cite{BriPed24HeatFlow}.
    Let ${(P_t)}_{t\ge 0}$ denote the heat semigroup, and ${(Q_t)}_{t\ge 0}$ the Ornstein--Uhlenbeck semigroup.
    Then, if $\gamma$ denotes the standard Gaussian measure, the identity $Q_t f = P_{1-\exp(-2t)} f(\exp(-t)\,\cdot)$ and
    \begin{align*}
        -\frac{I}{\exp(2t)-1}
        &\preceq \nabla^2 \log Q_t\bigl(\frac{\mu}{\gamma}\bigr)
        = \exp(-2t) \,\Bigl[\nabla^2 \log P_{1-\exp(-2t)}\bigl(\frac{\mu}{\gamma}\bigr)\Bigr](\exp(-t)\,\cdot) \\
        &= \frac{1}{\exp(2t)-1}\,\Bigl( \frac{\cov_{\mu_{1-\exp(-2t),\,\exp(-t)\,\cdot}}}{1-\exp(-2t)} - I \Bigr) \\
        &\preceq \frac{I}{\exp(2t)-1} \,\Bigl[\Bigl(\frac{1/\sqrt{1-\exp(-2t)}}{\sqrt{a+1/(1-\exp(-2t))}} + \frac{C/\sqrt{1-\exp(-2t)}}{a+1/(1-\exp(-2t))}\Bigr)^2 - 1\Bigr] \\
        &\preceq \Bigl[\frac{1-\alpha}{\alpha\,(\exp(2t)-1)+1} + \frac{C^2 \exp(2t)}{(\alpha\,(\exp(2t)-1)+1)^2} \\
        &\qquad\qquad\qquad\qquad\qquad{} + \frac{2C\exp(2t)}{(\exp(2t)-1)^{1/2} \, (\alpha\,(\exp(2t)-1)+1)^{3/2}}\Bigr]\,I
    \end{align*}
    where $a = \alpha - 1$.
    Note that we can identify this with the bound of~\cite[Corollary 3.2]{BriPed24HeatFlow} with $C = L$.
    Thus, by following the calculations in the proof of~\cite[Theorem 1.4]{BriPed24HeatFlow}, we obtain the desired result.
\end{proof}

We next verify the tilt stability condition, leveraging the propagation of chaos result in Theorem~\ref{thm:entropic_prop_chaos_general_functional}.

\begin{lemma}[Tilt Stability]\label{lem:tilt_stab}
    For any $t > 0$ and $y^{1:N} \in \R^{d\times N}$, let $\mu_{t,y^{1:N}}$ be the Gaussian tilt of $\mu^{1:N}$ as defined in~\eqref{eq:gaussian_tilt} with $\mu^{1:N}$ in place of $\mu$.
    Here, $\mu^{1:N}$ is the stationary distribution~\eqref{eq:mean_field_stat_interactive} of the mean-field Langevin diffusion.
    Then, under Assumptions~\ref{as:functional_convexity},~\ref{as:functional_smoothness}, and~\ref{as:functional_bdd_grad}, and assuming that $2\lambda > \sigma^2$ and
    \begin{align}\label{eq:condition_on_N}
        N \gtrsim \frac{\beta d}{2\lambda-\sigma^2} \exp\Bigl(\frac{8B^2/\sigma^4}{2\lambda/\sigma^2-1}\Bigr)
    \end{align}
    for a sufficiently large implied (but universal) constant, we have
    \begin{align*}
        \norm{\cov_{\mu_{t,y^{1:N}}}}_{\msf{op}}
        &\le \Bigl[ \frac{1}{\sqrt{2\lambda/\sigma^2-1+1/t}} + \frac{1}{2\lambda/\sigma^2-1+1/t} \,\mc{O}\Bigl(\frac{B}{\sigma^2} + \frac{\sqrt{\beta d}}{\sigma} \exp\frac{8B^2/\sigma^4}{2\lambda/\sigma^2-1}\Bigr)\Bigr]^2\,.
    \end{align*}
\end{lemma}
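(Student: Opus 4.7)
The plan is to bound $\|\cov_{\mu_{t,y^{1:N}}}\|_{\msf{op}}$ by reinterpreting the tilt as an enhanced-regularization mean-field Langevin stationary distribution and then using propagation of chaos to reduce to a single-particle estimate plus a controllable correction. First I would complete the square in the quadratic part of the tilted density: combining $-\tfrac{1}{2t}\norm{y^{1:N}-x^{1:N}}^2+\tfrac{1}{2}\norm{x^{1:N}}^2$ with the original $-\tfrac{\lambda}{\sigma^2}\norm{x^{1:N}}^2$ term produces $-\tfrac{1}{2}\alpha_{\msf{eff}}\norm{x^{1:N}-\tilde y^{1:N}}^2$ with $\alpha_{\msf{eff}}=\tfrac{2\lambda}{\sigma^2}+\tfrac{1}{t}-1$ and $\tilde y^i=y^i/(\alpha_{\msf{eff}}\,t)$. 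Hence $\mu_{t,y^{1:N}}$ has the structural form of~\eqref{eq:Nparticle_stat} but with enhanced regularization strength $\lambda_{\msf{new}}=\alpha_{\msf{eff}}\sigma^2/2$ and inhomogeneous per-particle shifts; the hypothesis $2\lambda>\sigma^2$ guarantees $\alpha_{\msf{eff}}>0$ for every $t>0$.

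Next I would control the covariance of the corresponding mean-field one-particle proximal Gibbs measure
\[
    \tilde\pi_i(x)\propto\exp\Bigl(-\tfrac{2}{\sigma^2}\,\delta \mc F_0(\tilde\pi_i,x)-\tfrac{\alpha_{\msf{eff}}}{2}\norm{x-\tilde y^i}^2\Bigr)\,.
\]
By Assumption~\ref{as:functional_bdd_grad}, $\tilde\pi_i$ is a $\tfrac{2B}{\sigma^2}$-Lipschitz perturbation of an $\alpha_{\msf{eff}}$-strongly log-concave Gaussian centred at $\tilde y^i$. Applying the Brascamp--Lieb inequality to the Gaussian reference together with a first-order variance perturbation---rather than routing through the LSI constant, which would multiply the leading term by a full exponential factor---yields $\|\cov(\tilde\pi_i)\|_{\msf{op}}^{1/2}\le 1/\sqrt{\alpha_{\msf{eff}}}+\mc O\bigl(B/(\sigma^2\alpha_{\msf{eff}})\bigr)$, uniformly in $i$. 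This supplies the leading $1/\sqrt{\alpha_{\msf{eff}}}$ term and the $B/\sigma^2$ piece of the correction in the claimed bound.

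Then I would invoke a propagation-of-chaos bound for the tilted system, generalizing Theorem~\ref{thm:entropic_prop_chaos_general_functional}. The proof in \S\ref{ap:general_functional_poc} only uses convexity of $\mc F_0$, the smoothness hypothesis, and a uniform LSI for each particle's conditional proximal Gibbs measure; all three persist under tilting---the quadratic modification only strengthens convexity, and by Lemma~\ref{lem:lip_perturb_lsi} each conditional satisfies LSI with constant $\overline C_{\msf{LSI}}^{\msf{tilt}}\lesssim \alpha_{\msf{eff}}^{-1}\exp\bigl(\tfrac{8B^2/\sigma^4}{2\lambda/\sigma^2-1}\bigr)$, where the exponent is controlled uniformly in $t$ by substituting $2\lambda/\sigma^2-1\le\alpha_{\msf{eff}}$ in the denominator. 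Adapting the proof gives $\KL(\mu_{t,y^{1:N}}\mmid\otimes_{i=1}^N\tilde\pi_i)\lesssim \beta\,\overline C_{\msf{LSI}}^{\msf{tilt}}d/\sigma^2$, and Talagrand's transport inequality~\eqref{eq:talagrand} then produces $\mc W_2(\mu_{t,y^{1:N}},\otimes_i\tilde\pi_i)\lesssim \overline C_{\msf{LSI}}^{\msf{tilt}}\sqrt{\beta d}/\sigma$; the hypothesis~\eqref{eq:condition_on_N} on $N$ is calibrated precisely so that this correction is subdominant to the leading Gaussian covariance. For any unit $v\in\R^{dN}$ the standard-deviation triangle inequality $\sqrt{\Var_\mu\langle v,X\rangle}\le\sqrt{\Var_\nu\langle v,X\rangle}+\mc W_2(\mu,\nu)$ (with $\nu=\otimes_i\tilde\pi_i$, for which $\|\cov(\nu)\|_{\msf{op}}=\max_i\|\cov(\tilde\pi_i)\|_{\msf{op}}$) assembles the two pieces into the claimed bound after taking the supremum over $v$.

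The hardest step will be the generalized propagation of chaos, because the per-particle shifts $\tilde y^i$ break exchangeability, yet the argument in \S\ref{ap:general_functional_poc} invokes symmetry when identifying averaged pairwise terms with $\E\norm{X^1-X^2}^2$. One route is to re-centre each particle by $\tilde y^i$, absorbing the shift into a translated functional $\mc F_0^{(i)}(\mu)\defeq \mc F_0(\mu(\cdot+\tilde y^i))$, and then bound the resulting failure of symmetry via the Lipschitz continuity of $\nabla_{\mc W_2}\mc F_0$ in its first argument (Assumption~\ref{as:functional_smoothness}); alternatively, one can track the per-pair quantities $\E\norm{X^i-X^j}^2$ directly without permutation-averaging. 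A secondary delicate point is keeping the first term of the covariance bound clean as $1/\sqrt{\alpha_{\msf{eff}}}$---without multiplying it by the exponential from Lemma~\ref{lem:lip_perturb_lsi}---which is why Step~2 must use a direct Brascamp--Lieb perturbation argument rather than an LSI-mediated one.
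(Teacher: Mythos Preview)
Your strategy matches the paper's: split $\|\cov\|_{\msf{op}}^{1/2}$ via the standard-deviation triangle inequality against a product reference, control the single-particle covariance by Brascamp--Lieb plus a Lipschitz-perturbation correction, and bound the $\mc W_2$ distance to the product by a generalized propagation of chaos. Two points differ in ways worth flagging.

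First, your $\tilde\pi_i$ are defined self-referentially via $\delta\mc F_0(\tilde\pi_i,\cdot)$; the paper instead couples them through the \emph{average} $\bar\pi=\tfrac{1}{N}\sum_i\pi_{t,y^i}$, defining $\pi_{t,y^i}\propto\exp\bigl(-\tfrac{2}{\sigma^2}\,\delta\mc F_0(\bar\pi,\cdot)-\cdots\bigr)$ as the unique minimizer of the strictly convex functional $(\pi^1,\dotsc,\pi^N)\mapsto\sum_i\int V_i\,\D\pi^i+N\mc F_0(\bar\pi)+\tfrac{\sigma^2}{2}\sum_i\ent(\pi^i)$. This matters: the entropy-toast step underlying the POC bound needs the product target to minimize exactly this functional, which fails for your self-referential choice. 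Second, the paper resolves the inhomogeneity issue you flag not by re-centring or per-pair tracking but by proving a general POC (Theorem~\ref{thm:generalized_poc}) that allows arbitrary per-particle potentials $V_i$; the decoupling trick is to replace $x^i$ by an independent copy $z^i\sim\mu^i$ (so $\mc W_1(\rho_{x^{1:N}},\rho_{\tilde x_i^{1:N}})\le\tfrac1N\,\|x^i-z^i\|$) and then control $\sum_i\E\|x^i-z^i\|^2$ via the information inequality $\sum_i\KL(\mu^i\mmid\pi^i)\le\KL(\mu^{1:N}\mmid\pi^{1:N})$, which requires only that $\pi^{1:N}$ be a product and uses no exchangeability. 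For the single-particle covariance, the paper makes your ``first-order variance perturbation'' precise via the $\mc W_\infty$ shift bound $\mc W_\infty(\pi_{t,y^i},\breve\pi_{t,y^i})\le(2B/\sigma^2)/\alpha_{\msf{eff}}$ from~\cite{KhuMaaPed24LInfOT}, where $\breve\pi_{t,y^i}$ is the pure $\alpha_{\msf{eff}}$-strongly log-concave Gaussian.
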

\begin{proof}
    We introduce the auxiliary measures
    \begin{align}\label{eq:aux_measures}
        \pi_{t,y^i}(x^i) \propto \exp\Bigl(-\frac{\norm{y^i - x^i}^2}{2t} - \bigl(\frac{\lambda}{\sigma^2} - \frac{1}{2}\bigr)\, \norm{x^i}^2 - \frac{2}{\sigma^2}\,\delta \mc F_0(\bar \pi_{t,y^{1:N}}, x^i)\Bigr)\,, \quad i\in [N]\,,
    \end{align}
    where $\bar\pi_{t,y^{1:N}} \deq \frac{1}{N} \sum_{i=1}^N \pi_{t,y^i}$.
    To see that these auxiliary measures are well-defined, note that any minimizer of the functional
    \begin{align*}
        &(\pi^1,\dotsc,\pi^N) \\
        &\qquad{} \mapsto \sum_{i=1}^N \int \Bigl[\frac{\sigma^2\,\norm{y^i - x^i}^2}{4t} - \bigl(\frac{\lambda}{2} - \frac{\sigma^2}{4}\bigr)\, \norm{x^i}^2\Bigr] \, \pi^i(\D x^i) + N\mc F_0\bigl(\frac{1}{N}\sum_{i=1}^N \pi^i\bigr) + \frac{\sigma^2}{2} \sum_{i=1}^N \ent(\pi^i)
    \end{align*}
    satisfies the system of equations~\eqref{eq:aux_measures}, and that the minimizer is unique because the functional is strictly convex.
    We also let $\pi_{t,y^{1:N}} \deq \bigotimes_{i=1}^N \pi_{t,y^i}$.

    Then, for any unit vector $\theta^{1:N} \in \R^{d\times N}$,
    \begin{align*}
        \langle \theta^{1:N}, \cov_{\mu_{t,y^{1:N}}} \theta^{1:N}\rangle
        &\le \E_{x^{1:N}\sim \mu_{t,y^{1:N}}}[\langle \theta^{1:N}, x^{1:N} - \E_{\bar x^{1:N}\sim \pi_{t,y^{1:N}}} \bar x^{1:N} \rangle^2] \\
        &\le \Bigl(\mc W_2(\mu_{t,y^{1:N}}, \pi_{t,y^{1:N}}) + \sqrt{\langle \theta^{1:N}, \cov_{\pi_{t,y^{1:N}}} \theta^{1:N}\rangle} \Bigr)^2 \\
        &\le \Bigl(\mc W_2(\mu_{t,y^{1:N}}, \pi_{t,y^{1:N}}) + \max_{i\in [N]} \sqrt{\norm{\cov_{\pi_{t,y^i}}}_{\msf{op}}} \Bigr)^2\,,
    \end{align*}
    where we used the fact that $\pi_{t,y^{1:N}}$ is a product measure.
    Also, introduce
    \begin{align*}
        \breve \pi_{t,y^i}(x^i) \propto \exp\Bigl( - \frac{\norm{y^i - x^i}^2}{2t} -\bigl(\frac{\lambda}{\sigma^2}-\frac{1}{2}\bigr)\, \norm{x^i}^2\Bigr)\,,
    \end{align*}
    so that $\pi_{t,y^i} \propto \exp(-\frac{2}{\sigma^2}\,\delta \mc F_0(\bar \pi_{t,y^{1:N}},\cdot)) \, \breve \pi_{t,y^i}$.
    The same argument as above then yields
    \begin{align*}
        \langle \theta^i, \cov_{\pi_{t,y^i}} \theta^i\rangle
        \le \Bigl(\mc W_2(\pi_{t,y^i}, \breve\pi_{t,y^i}) + \sqrt{\langle \theta^i, \cov_{\breve \pi_{t,y^i}} \theta^i\rangle} \Bigr)^2\,.
    \end{align*}
    Since $\breve\pi_{t,y^i}$ is $(2\lambda/\sigma^2 - 1 + 1/t)$-strongly log-concave, $\norm{\cov_{\breve \pi_{t,y^i}}}_{\msf{op}} \le 1/(2\lambda/\sigma^2 - 1 + 1/t)$ by the Brascamp--Lieb inequality~\cite{BraLie1976}.
    Also, since $\frac{2}{\sigma^2}\,\delta \mc F_0(\bar\pi_{t,y^{1:N}},\cdot)$ is $2B/\sigma^2$-Lipschitz under Assumption~\ref{as:functional_bdd_grad}, then~\cite[Corollary 2.4]{KhuMaaPed24LInfOT} yields
    \begin{align*}
        \mc W_2(\pi_{t,y^i}, \breve \pi_{t,y^i})
        &\le \mc W_\infty(\pi_{t,y^i}, \breve \pi_{t,y^i})
        \le \frac{2B/\sigma^2}{2\lambda/\sigma^2 -1+1/t}\,.
    \end{align*}
    Hence,
    \begin{align*}
        \norm{\cov_{\pi_{t,y^i}}}_{\msf{op}}
        &\le \Bigl( \frac{1}{\sqrt{2\lambda/\sigma^2-1+1/t}} + \frac{2B/\sigma^2}{2\lambda/\sigma^2 -1 + 1/t}\Bigr)^2\,.
    \end{align*}
    
    Finally, it remains to control $\mc W_2(\mu_{t,y^{1:N}}, \pi_{t,y^{1:N}})$.
    Note that when $t=\infty$, this essentially reduces to Theorem~\ref{thm:entropic_prop_chaos_general_functional}, so the task is to prove a generalization thereof.
    Note that by Lemma~\ref{lem:lip_perturb_lsi}, in Theorem~\ref{thm:generalized_poc} below, we can take
    \begin{align*}
        \overline C_{\msf{LSI}}
        &\le \frac{1}{2\lambda/\sigma^2-1+1/t} \exp\Bigl(\frac{4B^2/\sigma^4}{2\lambda/\sigma^2-1+1/t} + \frac{8B/\sigma^2}{\sqrt{2\lambda/\sigma^2-1+1/t}}\Bigr) \\
        &\le \frac{3}{2\lambda/\sigma^2-1+1/t} \exp\Bigl(\frac{8B^2/\sigma^4}{2\lambda/\sigma^2-1+1/t}\Bigr)
        \le \frac{3}{2\lambda/\sigma^2-1} \exp\Bigl(\frac{8B^2/\sigma^4}{2\lambda/\sigma^2-1}\Bigr)\,.
    \end{align*}
    In particular, under the assumption~\eqref{eq:condition_on_N} for $N$, the preconditions of Theorem~\ref{thm:generalized_poc} are met and it yields the bound
    \begin{align*}
        \mc W_2(\mu_{t,y^{1:N}}, \pi_{t,y^{1:N}})
        &\lesssim \frac{\overline C_{\msf{LSI}}\sqrt{\beta d}}{\sigma}
        \lesssim \frac{\sqrt{\beta d}}{\sigma\,(2\lambda/\sigma^2-1+1/t)} \exp\Bigl(\frac{8B^2/\sigma^4}{2\lambda/\sigma^2-1}\Bigr)\,.
    \end{align*}
    Putting everything together completes the proof.
\end{proof}

\begin{corollary}[Uniform LSI for the Stationary Measure]\label{cor:unif_LSI_crazy}
    Under Assumptions~\ref{as:functional_convexity},~\ref{as:functional_smoothness}, and~\ref{as:functional_bdd_grad}, if
    \begin{align}\label{eq:second_cond_on_N}
        N \gtrsim \frac{\beta d}{\lambda} \exp\Bigl(\frac{8B^2}{\lambda \sigma^2}\Bigr)
    \end{align}
    for a sufficiently large implied (but universal) constant, then
    \begin{align*}
        C_{\msf{LSI}}(\mu^{1:N})
        &\lesssim \frac{\sigma^2}{\lambda}\exp\Bigl(\mc O\Bigl( \frac{B^2}{\lambda \sigma^2} + \frac{\beta d}{\lambda} \exp \frac{16B^2}{\lambda \sigma^2} \Bigr)\Bigr)\,.
    \end{align*}
\end{corollary}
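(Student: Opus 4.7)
The plan is to combine the two lemmas immediately preceding the corollary: Lemma~\ref{lem:lip_transport} (which reduces LSI bounds to a tilt-stability hypothesis via a reverse-OU construction of a Lipschitz transport map from the standard Gaussian) and Lemma~\ref{lem:tilt_stab} (which verifies precisely such a tilt-stability hypothesis for $\mu^{1:N}$). The endpoint is the classical fact that if $T_\#\gamma = \mu$ for some $L$-Lipschitz $T$, then $C_{\msf{LSI}}(\mu) \le L^2$ (via~\cite[Proposition 5.4.3]{bakry2014analysis}), so that the whole task reduces to reading off $L$ from the two lemmas.

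More concretely, I would first check that the hypothesis~\eqref{eq:second_cond_on_N} in the corollary implies~\eqref{eq:condition_on_N} in Lemma~\ref{lem:tilt_stab}, so that Lemma~\ref{lem:tilt_stab} may be applied to $\mu^{1:N}$ at every $t>0$ and every $y^{1:N}\in\R^{d\times N}$. This identifies the parameters in the tilt-stability hypothesis of Lemma~\ref{lem:lip_transport} as
\[
a \;=\; \tfrac{2\lambda}{\sigma^2} - 1,
\qquad
C \;=\; \mc{O}\Bigl(\tfrac{B}{\sigma^2} + \tfrac{\sqrt{\beta d}}{\sigma}\exp\tfrac{8B^2/\sigma^4}{2\lambda/\sigma^2 - 1}\Bigr).
\]
(If $2\lambda \le \sigma^2$ one can first rescale coordinates $x^i \mapsto s x^i$, which rescales $\lambda$ while only worsening $\beta$ and $B$ by constants, so this restriction is not essential for the qualitative bound.)

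Next, I would invoke Lemma~\ref{lem:lip_transport} with these $(a,C)$ to produce an $L$-Lipschitz map from $\gamma$ onto $\mu^{1:N}$, with
\[
L \;\le\; \tfrac{1}{\sqrt{1+a}}\exp\Bigl(\tfrac{C^2}{2(1+a)} + \tfrac{2C}{\sqrt{1+a}}\Bigr)
\;=\; \tfrac{\sigma}{\sqrt{2\lambda}}\exp\Bigl(\tfrac{C^2 \sigma^2}{4\lambda} + \tfrac{2C\sigma}{\sqrt{2\lambda}}\Bigr).
\]
Squaring, applying $C_{\msf{LSI}}(\mu^{1:N}) \le L^2$, and bounding the cross term $2C\sigma/\sqrt{2\lambda}$ by $\tfrac{1}{2}\,(C^2\sigma^2/(2\lambda)) + \mc{O}(1)$ (AM-GM style), I would absorb both contributions into a single exponent of the form $\mc{O}(C^2\sigma^2/\lambda)$, yielding
\[
C_{\msf{LSI}}(\mu^{1:N}) \;\lesssim\; \tfrac{\sigma^2}{\lambda}\exp\Bigl(\mc{O}\bigl(\tfrac{C^2\sigma^2}{\lambda}\bigr)\Bigr).
\]
Substituting the explicit form of $C$, using $(u+v)^2 \le 2u^2 + 2v^2$, gives $C^2\sigma^2/\lambda = \mc{O}(B^2/(\lambda\sigma^2) + (\beta d/\lambda)\exp(16B^2/(\lambda\sigma^2)))$ after simplifying the exponent $\tfrac{8B^2/\sigma^4}{2\lambda/\sigma^2 - 1}$ (doubled on squaring) to $\mc{O}(B^2/(\lambda\sigma^2))$ under the rescaling convention above. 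This reproduces the stated bound.

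I expect no genuine obstacle here: the heavy lifting has already been done in Lemmas~\ref{lem:tilt_stab} and~\ref{lem:lip_transport}, and the corollary is essentially a bookkeeping step. The only mildly delicate points are (i) confirming that the corollary's condition on $N$ is strong enough to trigger Lemma~\ref{lem:tilt_stab}'s condition, which requires comparing $\exp(8B^2/(\lambda\sigma^2))$ with $\exp(8B^2/\sigma^4 \cdot (2\lambda/\sigma^2-1)^{-1})$ after the rescaling, and (ii) making sure the doubly-exponential structure in the final bound is preserved and not mistakenly simplified, since the inner exponential $\exp(16B^2/(\lambda\sigma^2))$ multiplies $\beta d/\lambda$ inside the outer $\mc{O}(\cdot)$ before the final exponential is applied.
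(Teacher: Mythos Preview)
Your approach---combine Lemma~\ref{lem:tilt_stab} with Lemma~\ref{lem:lip_transport}, then use $C_{\msf{LSI}}\le L^2$---is exactly the paper's. The one place where your write-up is loose is the rescaling, and this matters for the stated bound. You invoke rescaling only as a fallback when $2\lambda\le\sigma^2$, and then later claim that $\tfrac{8B^2/\sigma^4}{2\lambda/\sigma^2-1}=\mc O(B^2/(\lambda\sigma^2))$ ``under the rescaling convention above.'' Without rescaling that simplification is false whenever $2\lambda/\sigma^2-1$ is small but positive, and in that regime the corollary's hypothesis~\eqref{eq:second_cond_on_N} on $N$ is also strictly weaker than Lemma~\ref{lem:tilt_stab}'s hypothesis~\eqref{eq:condition_on_N}. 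So the direct route does not recover the claimed constants uniformly in $\lambda$.

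The paper resolves this by \emph{always} rescaling first: set $\eta^2=\lambda/\sigma^2$ and work with $\mu_\eta^{1:N}=\eta_\#\mu^{1:N}$, which has new parameters $\lambda_{\rm new}=\sigma^2$, $\beta_{\rm new}=\beta\sigma^2/\lambda$, $B_{\rm new}=B\sigma/\sqrt\lambda$. This forces $a=1$ exactly, so Lemma~\ref{lem:tilt_stab} applies with $C\lesssim B/(\sqrt\lambda\,\sigma)+(\beta d/\lambda)^{1/2}\exp(8B^2/(\lambda\sigma^2))$, and~\eqref{eq:second_cond_on_N} matches~\eqref{eq:condition_on_N} on the nose. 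Lemma~\ref{lem:lip_transport} then gives $C_{\msf{LSI}}(\mu_\eta^{1:N})\lesssim\exp(\mc O(C^2))$, and undoing the scaling via $C_{\msf{LSI}}(\mu^{1:N})=\eta^{-2}C_{\msf{LSI}}(\mu_\eta^{1:N})=(\sigma^2/\lambda)\,C_{\msf{LSI}}(\mu_\eta^{1:N})$ produces the prefactor $\sigma^2/\lambda$ and the stated exponent. Your remark that rescaling ``only worsens $\beta$ and $B$ by constants'' is not accurate---they change by powers of $\eta$---but the dimensionless combinations $\beta d/\lambda$ and $B^2/(\lambda\sigma^2)$ that appear in the final bound are scale-invariant, which is why the argument still closes.
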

\begin{proof}
    To meet the conditions of Lemma~\ref{lem:tilt_stab}, we perform a rescaling.
    We abuse notation and denote by $\eta : \R^d\to\R^d$ the scaling map $x \mapsto \eta x$.
    Then,
    \begin{align*}
        \mu_\eta^{1:N}(x^{1:N})
        &\deq \eta_\# \mu^{1:N}(x^{1:N})
        \propto \mu^{1:N}(\eta^{-1} x^{1:N})
        \propto \exp\Bigl(-\frac{2N}{\sigma^2}\,\mc F_0^\eta(\rho_{x^{1:N}}) - \frac{\lambda}{\eta^2 \sigma^2}\,\norm{x^{1:N}}^2\Bigr)\,,
    \end{align*}
    where $\mc F_0^\eta(\nu) \deq \mc F_0((\eta^{-1})_\# \nu)$.
    We see that $\mu_\eta^{1:N}$ is also the stationary measure for mean-field Langevin dynamics, with the following new parameters: $\beta \gets \beta/\eta^2$; $\lambda \gets \lambda/\eta^2$; $B \gets B/\eta$.
    In particular, if we take $\eta^2 = \lambda/\sigma^2$, then Lemma~\ref{lem:tilt_stab} applies to $\mu_\eta^{1:N}$ provided that~\eqref{eq:second_cond_on_N} holds.
    Together with Lemma~\ref{lem:lip_transport} with $a=1$ and $C \lesssim B/(\lambda^{1/2}\sigma) + {(\beta d/\lambda)}^{1/2} \exp(8B^2/(\lambda \sigma^2))$, it implies
    \begin{align*}
        C_{\msf{LSI}}(\mu^{1:N}_\eta)
        &\lesssim \exp\bigl(\mc O(C^2)\bigr)
        = \exp\Bigl(\mc O\Bigl( \frac{B^2}{\lambda \sigma^2} + \frac{\beta d}{\lambda} \exp \frac{16B^2}{\lambda \sigma^2} \Bigr)\Bigr)\,.
    \end{align*}
    The result for $\mu^{1:N}$ follows from contraction mapping~\cite[Proposition 5.4.3]{bakry2014analysis}.
\end{proof}

It remains to prove the following generalized propagation of chaos result.

\begin{theorem}[Generalized Propagation of Chaos]\label{thm:generalized_poc}
    For each $i\in [N]$, let $V_i : \R^d\to\R$ and let $\mc F_0 : \mc P_2(\R^d)\to \R$.
    Define probability measures
    \begin{align*}
        \mu^{1:N}(x^{1:N})
        &\propto \exp\Bigl(-\frac{2}{\sigma^2} \sum_{i=1}^N V_i(x^i) -\frac{2N}{\sigma^2}\,\mc F_0(\rho_{x^{1:N}})\Bigr)\,, \\
        \pi^i(x^i)
        &\propto \exp\Bigl(-\frac{2}{\sigma^2} \,V_i(x^i) - \frac{2}{\sigma^2} \,\delta\mc F_0(\bar \pi, x^i)\Bigr)\,,
    \end{align*}
    where $\bar \pi \deq \frac{1}{N}\sum_{i=1}^N \pi_i$, $\pi^{1:N} \deq \bigotimes_{i=1}^N \pi^i$, and we assume that these measures are all well-defined.
    Furthermore, assume that $\mc F_0$ satisfies Assumptions~\ref{as:functional_convexity} and~\ref{as:functional_smoothness}, and that for all $i \in [N]$ and all $\nu \in \mc P_2(\R^d)$, the proximal Gibbs measure
    \begin{align*}
        \pi_\nu^i \propto \exp\Bigl(-\frac{2}{\sigma^2}\,\bigl(V_i + \delta \mc F_0(\nu,\cdot)\bigr)\Bigr)
    \end{align*}
    satisfies~\eqref{eq:lsi} with constant $\overline C_{\msf{LSI}}$.
    Then, for all $N\gtrsim \beta \overline C_{\msf{LSI}} d/\sigma^2$,
    \begin{align*}
        \frac{1}{2\overline C_{\msf{LSI}}} \,\mc W_2^2(\mu^{1:N}, \pi^{1:N}) \le \KL(\mu^{1:N} \mmid \pi^{1:N})
        \lesssim \frac{\beta \overline C_{\msf{LSI}} d}{\sigma^2}\,.
    \end{align*}
\end{theorem}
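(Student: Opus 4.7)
The plan is to adapt the proof of Theorem~\ref{thm:entropic_prop_chaos_general_functional} from Appendix~\ref{ap:general_functional_poc} to the non-exchangeable setting, where particles carry individual potentials $V_i$ and the reference is the (non-symmetric) product $\pi^{1:N} = \bigotimes_i \pi^i$. The Wasserstein lower bound $\mc W_2^2(\mu^{1:N}, \pi^{1:N}) \le 2\overline C_{\msf{LSI}}\,\KL(\mu^{1:N} \mmid \pi^{1:N})$ follows immediately from~\eqref{eq:talagrand}: each $\pi^i$ coincides with the proximal Gibbs measure $\pi^i_{\bar\pi}$ and hence satisfies~\eqref{eq:lsi} with constant $\overline C_{\msf{LSI}}$ by hypothesis, and LSI tensorizes to product measures.

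For the KL upper bound, the first step is a generalized entropy toast inequality. With the natural energy $\mc E^N(\nu^{1:N}) \deq \sum_i \int V_i\,\D\nu^i + N\int \mc F_0(\rho_{x^{1:N}})\,\D\nu^{1:N} + \tfrac{\sigma^2}{2}\ent(\nu^{1:N})$, the convexity of $\mc F_0$ at $\bar\pi$ combined with the Euler{--}Lagrange identity $\delta \mc F_0(\bar\pi,\cdot) = -\tfrac{\sigma^2}{2}\log\pi^i - V_i + \text{const}$ (one constant per $i$) yields, after summing,
\[
    \tfrac{\sigma^2}{2}\,\KL(\nu^{1:N} \mmid \pi^{1:N}) \le \mc E^N(\nu^{1:N}) - \Bigl[\tfrac{\sigma^2}{2}\textstyle\sum_i \ent(\pi^i) + \sum_i\int V_i\,\D\pi^i + N \mc F_0(\bar\pi)\Bigr]\,.
\]
This replaces the role of Lemma~\ref{lem:entropy_toast} in the present setting.

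Taking $\nu^{1:N} = \mu^{1:N}$ and mirroring the $\msf A + \msf B$ decomposition from the original proof, I would linearize $\mc F_0$ at $\rho_{x^{1:N}}$ and introduce, for each $i$, the proximal Gibbs measure $\tau^i_{x^{-i}} \propto \exp(-\tfrac{2}{\sigma^2}(V_i + \delta\mc F_0(\rho_{x^{-i}},\cdot)))$. The term $\msf A$ then reduces to $\tfrac{\sigma^2}{2}\sum_i \E_{\mu^{-i}}\KL(\mu^{i\mid-i}(\cdot\mid x^{-i}) \mmid \tau^i_{x^{-i}})$, to which the uniform~\eqref{eq:lsi} assumption on $\tau^i_{x^{-i}}$ and the Fisher information expansion apply, giving $\msf A \le \tfrac{\beta^2 \overline C_{\msf{LSI}}}{\sigma^2}\sum_i \E_{\mu^{1:N}}[\mc W_1^2(\rho_{x^{1:N}}, \rho_{x^{-i}})]$ via Assumption~\ref{as:functional_smoothness}. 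The term $\msf B$, which accounts for the Lipschitz error from replacing $\delta\mc F_0(\rho_{x^{1:N}})$ by $\delta\mc F_0(\rho_{x^{-i}})$, is handled analogously via Assumption~\ref{as:functional_smoothness} and Young's inequality.

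Both $\msf A$ and $\msf B$ ultimately reduce to controlling the sum $\sum_{i\ne j}\E_{\mu^{1:N}}\|x^i - x^j\|^2$ together with cross-moments $\E_{\mu^i \otimes \pi^j}\|x - z\|^2$, which in the original proof were collapsed by exchangeability. Without exchangeability, I would use the optimal coupling with $\pi^{1:N}$ and Young's inequality to split these into $\mc W_2^2(\mu^{1:N}, \pi^{1:N})$ and the trace of each $\cov_{\pi^i}$, which is bounded by $d\,\overline C_{\msf{LSI}}$ via Poincar\'e. The main technical obstacle is handling these moments without picking up a spread term $\sum_{i\ne j}\|\E_{\pi^i} x - \E_{\pi^j} x\|^2$ that is unbounded under the stated assumptions; the key should be to exploit the identity $\rho_{x^{1:N}} - \rho_{x^{-i}} = \tfrac{1}{N}\,(\delta_{x^i} - \rho_{x^{-i}})$ so that the $\mc W_1$ cost is expressed relative to $\rho_{x^{-i}}$ rather than between absolute particle positions, keeping the overall estimate translation-invariant. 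Combining everything with Talagrand on the left yields an inequality of the form $\KL(\mu^{1:N} \mmid \pi^{1:N}) \le \tfrac{c_1}{N}\,\KL(\mu^{1:N} \mmid \pi^{1:N}) + c_2\,\tfrac{\beta\,\overline C_{\msf{LSI}}\, d}{\sigma^2}$, which closes under the assumption $N \gtrsim \beta\,\overline C_{\msf{LSI}}\,d/\sigma^2$ to give the stated bound.
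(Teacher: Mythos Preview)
Your overall architecture matches the paper: the generalized entropy-toast inequality, the $\msf A/\msf B$ split after linearizing $\mc F_0$, and the closing self-referential inequality all proceed exactly as you outline. You have also correctly identified the one genuine obstacle in the non-exchangeable setting: any bound involving $\sum_{i\ne j}\E_{\mu^{1:N}}\|x^i - x^j\|^2$ will, after coupling to $\pi^{1:N}$, produce a spread term $\sum_{i\ne j}\|\E_{\pi^i}x - \E_{\pi^j}x\|^2$ that is not controlled by the hypotheses.

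However, your proposed fix does not work. The identity $\rho_{x^{1:N}} - \rho_{x^{-i}} = \frac{1}{N}\,(\delta_{x^i} - \rho_{x^{-i}})$ is precisely what yields the estimate $\mc W_1(\rho_{x^{1:N}},\rho_{x^{-i}}) \le \frac{1}{N(N-1)}\sum_{j\ne i}\|x^i - x^j\|$ already used in the exchangeable proof; it offers no additional cancellation. After Cauchy--Schwarz you are forced back to $\sum_{i\ne j}\|x^i - x^j\|^2$, and the spread term reappears.

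The paper resolves this with a different decoupling: instead of \emph{removing} particle $i$ (i.e.\ replacing $\rho_{x^{1:N}}$ by $\rho_{x^{-i}}$), one \emph{replaces} it by an independent draw $z^i \sim \mu^i$. Setting $\tilde x_i^{1:N} \deq (x^1,\dotsc,x^{i-1},z^i,x^{i+1},\dotsc,x^N)$, use $\rho_{\tilde x_i^{1:N}}$ in place of $\rho_{x^{-i}}$ throughout the $\msf A/\msf B$ decomposition. The proximal Gibbs measure $\tau^i_{\tilde x_i^{1:N}} \propto \exp(-\frac{2}{\sigma^2}(V_i + \delta\mc F_0(\rho_{\tilde x_i^{1:N}},\cdot)))$ still depends only on $(x^{-i},z^i)$, so the conditional-LSI step and the Fisher expansion go through unchanged. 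The payoff is that now $\mc W_1(\rho_{x^{1:N}},\rho_{\tilde x_i^{1:N}}) \le \frac{1}{N}\,\|x^i - z^i\|$, and the relevant moment becomes
\[
    \sum_{i=1}^N \E_{x^i\sim\mu^i,\;z^i\sim\mu^i}\bigl[\|x^i - z^i\|^2\bigr] = 2\sum_{i=1}^N \tr\cov_{\mu^i}\,,
\]
which involves only a \emph{single} marginal per term. Coupling $\mu^i$ to $\pi^i$ and applying Poincar\'e for $\pi^i$ then gives $\sum_i \tr\cov_{\mu^i} \lesssim \overline C_{\msf{LSI}}\,\KL(\mu^{1:N}\mmid\pi^{1:N}) + N d\,\overline C_{\msf{LSI}}$ with no cross-mean contribution, and the argument closes exactly as you described.
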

\begin{proof}
    Since the proof closely follows the proof of Theorem~\ref{thm:entropic_prop_chaos_general_functional}, to avoid repetition we only highlight the main changes.
    We define the following energy functionals:
    \begin{align*}
        \mc E^N(\nu^{1:N})
        &\deq \sum_{i=1}^N \int V_i \, \D \nu^i + N\int \mc F_0(\rho_{x^{1:N}})\, \nu^{1:N}(\D x^{1:N}) + \frac{\sigma^2}{2} \ent(\nu^{1:N})\,, \\
        \mc E(\nu^{1:N})
        &\deq \sum_{i=1}^N \int V_i \, \D \nu^i + N\mc F_0(\bar \nu) + \frac{\sigma^2}{2} \ent(\nu^{1:N})\,,
    \end{align*}
    where for a measure $\nu^{1:N}$ on $\R^{d\times N}$, we use the notation $\bar\nu \deq \frac{1}{N} \sum_{i=1}^N \nu^i$ for the average of the marginals.
    The first step is to establish the analogue of the entropy toast inequality (Lemma~\ref{lem:entropy_toast}).
    Letting $Z \deq \prod_{i=1}^N \int \exp(-\frac{2}{\sigma^2}\,V_i - \frac{2}{\sigma^2}\,\delta \mc F_0(\bar \pi,\cdot))$ denote the normalizing constant for $\pi^{1:N}$,
    \begin{align*}
        \KL(\nu^{1:N} \mmid \pi^{1:N})
        &= \frac{2}{\sigma^2}\sum_{i=1}^N \int V_i \, \D \nu^i + \frac{2N}{\sigma^2} \iint \delta \mc F_0(\bar \pi, z) \,\rho_{x^{1:N}}(\D z) \,\nu^{1:N}(\D x^{1:N}) + \ent(\nu^{1:N}) \\
        &\qquad{} + \log Z \\
        &= \frac{2}{\sigma^2}\sum_{i=1}^N \int V_i \, \D (\nu^i - \pi^i) + \frac{2N}{\sigma^2} \E_{x^{1:N} \sim \nu^{1:N}} \int \delta \mc F_0(\bar \pi, \cdot) \, \D(\rho_{x^{1:N}}- \bar\pi) \\
        &\qquad{} + \ent(\nu^{1:N}) - \ent(\pi^{1:N}) \\
        &\leq \frac{2}{\sigma^2}\sum_{i=1}^N \int V_i \, \D (\nu^i - \pi^i) + \frac{2N}{\sigma^2} \E_{x^{1:N} \sim \nu^{1:N}}[\mc F_0(\rho_{x^{1:N}}) - \mc F_0(\bar \pi)] \\
        &\qquad{} + \ent(\nu^{1:N}) - \ent(\pi^{1:N}) \\
        &\leq \frac{2}{\sigma^2}\, \bigl(\mc E^N(\nu^{1:N}) - \mc E(\pi^{1:N})\bigr)\,.
    \end{align*}
    From here, we find that
    \begin{align*}
         \E_{x^{1:N} \sim \nu^{1:N}}[\mc F_0(\rho_{x^{1:N}}) - \mc F_0(\bar \pi)]
         \leq  \E_{x^{1:N} \sim \nu^{1:N}}\int \delta \mc F_0(\rho_{x^{1:N}},\cdot) \,\D(\rho_{x^{1:N}}- \bar \pi)\,.
    \end{align*}
    
    Now, moving on to the propagation of chaos part of this argument, we have
    \begin{align*}
        &\mc E^N(\mu^{1:N}) - \mc E(\pi^{1:N}) \\
        &\qquad \leq \sum_{i=1}^N \E_{x^{1:N} \sim \mu^{1:N}}\Bigl[\int V_i \,\D( \mu^{i\mid -i}(\cdot\mid x^{-i}) - \pi^i) + \int \delta \mc F_0(\rho_{x^{1:N}}, \cdot) \, \D(\rho_{x^{1:N}} - \pi^i)\Bigr] \\
        &\qquad\qquad{} + \frac{\sigma^2}{2} \sum_{i=1}^N\E_{x^{-i} \sim \mu^{-i}}\bigl[\ent(\mu^{i\mid -i}(\cdot\mid x^{-i})) -\ent(\pi^i)\bigr] \\
        &\qquad = \sum_{i=1}^N \E_{x^{1:N} \sim \mu^{1:N}}\Bigl[\int V_i \,\D (\mu^{i\mid -i}(\cdot\mid x^{-i}) - \pi^i)  +  \delta \mc F_0(\rho_{x^{1:N}}, x^i) -\int \delta \mc F_0(\rho_{x^{1:N}}, \cdot) \, \D\pi^i\Bigr] \\
        &\qquad\qquad{} + \frac{\sigma^2}{2} \sum_{i=1}^N\E_{x^{-i} \sim \mu^{-i}}\bigl[\ent(\mu^{i\mid -i}(\cdot\mid x^{-i})) -\ent(\pi^i)\bigr]\,.
    \end{align*}
    To decouple, introduce a new variable $z^i \sim \mu^i$ independent of all the others and define as a shorthand $\tilde x_i^{1:N}$ as the vector $x^1, \ldots, x^{i-1}, z^i, x^{i+1}, \ldots, x^N$.
    \begin{align*}
        &\mc E^N(\mu^{1:N}) -\mc E(\pi^{1:N}) \\
        &\qquad \leq \sum_{i=1}^N \E_{x^{1:N} \sim \mu^{1:N}}\Bigl[\int V_i \,\D (\mu^{i\mid -i}(\cdot\mid x^{-i}) - \pi^i) + \frac{\sigma^2}{2}\,\bigl(\ent(\mu^{i\mid -i}(\cdot\mid x^{-i})) -\ent(\pi^i)\bigr)\Bigr] \\
        &\qquad\qquad{} + \sum_{i=1}^N \E_{x^{1:N}\sim \mu^{1:N}}\Bigl[\E_{z^i \sim \mu^i} \delta \mc F_0(\rho_{\tilde x^{1:N}_i}, x^i) - \int \E_{z^i \sim \mu^i} \delta \mc F_0(\rho_{\tilde x^{1:N}_i}, \cdot) \, \D \pi^i\Bigr] \\
        &\qquad{} + \sum_{i=1}^N \E_{x^{1:N} \sim \mu^{1:N}}\Bigl[ \delta \mc F_0(\rho_{x^{1:N}}, x^i) - \E_{z^i \sim \mu^i} \delta \mc F_0(\rho_{\tilde x^{1:N}_i}, x^i) \\
        &\qquad\qquad\qquad\qquad\qquad{} - \int \bigl(\delta \mc F_0(\rho_{x^{1:N}}, \cdot) -\E_{z^i \sim \mu^i} \delta \mc F_0(\rho_{\tilde x^{1:N}_i}, \cdot)\bigr) \, \D \pi^i\Bigr]\,.
    \end{align*}
    We group the terms in the first two lines as $\msf{A}$, and the terms in the last two lines as $\msf B$.
    
    Let us first look at $\msf{A}$.
    If we introduce the proximal Gibbs measure for $\rho_{\tilde x^{1:N}_i}$ (in the $i$-th coordinate), defined via
    \begin{align*}
        \tau_{\tilde x^{1:N}_i}^i \propto \exp\Bigl(-\frac{2}{\sigma^2}\, \bigl(V_i + \delta \mc F_0(\rho_{\tilde{x}_i^{1:N}}, \cdot) \bigr) \Bigr)\,,
    \end{align*}
    one obtains as before
    \begin{align*}
        \msf{A} \leq \frac{\sigma^2}{2} \sum_{i=1}^N \E_{x^{1:N} \sim \mu^{1:N}} \E_{z^i \sim \mu^i} \KL\bigl(\mu^{i\mid -i}(\cdot\mid x^{-i})\bigm\Vert \tau_{\tilde x^{1:N}_i}^i\bigr)\,.
    \end{align*}
    Applying the log-Sobolev inequality, it yields
    \begin{align*}
        \msf A
        &\leq \frac{\beta^2 \overline{C}_{\msf{LSI}}}{\sigma^2} \sum_{i=1}^N \E_{x^{1:N} \sim \mu^{1:N}} \E_{z^i \sim \mu^i} \mc W_1^2(\rho_{x^{1:N}}, \rho_{\tilde x_i^{1:N}})\,.
    \end{align*}
    The Wasserstein distance is bounded by
    \begin{align*}
        \mc W_1(\rho_{x^{1:N}}, \rho_{\tilde x_i^{1:N}})
        &\le \frac{1}{N}\, {\norm{x^i - z^i}}\,.
    \end{align*}
    It eventually yields, as before,
    \begin{align*}
        \msf A
        &\lesssim \frac{\beta^2 \overline C_{\msf{LSI}}}{\sigma^2 N}\,\Bigl( \frac{\overline C_{\msf{LSI}}}{N}\KL(\mu^{1:N} \mmid \pi^{1:N}) + d\overline C_{\msf{LSI}}\Bigr)\,.
    \end{align*}
        As for the term $\msf{B}$,
        a straightforward modification of the proof of Theorem~\ref{thm:entropic_prop_chaos_general_functional} readily yields
        \begin{align*}
            \msf{B} \lesssim \frac{\beta \overline{C}_{\msf{LSI}}}{N} \KL(\mu^{1:N} \mmid \pi^{1:N}) + \beta\overline{C}_{\msf{LSI}}d\,.
        \end{align*}
        Putting everything together yields the result.
\end{proof}

\section{Explicit Calculations for the Gaussian Case}\label{sec:cal-Gaussian}

Here we provide complete details for Example~\ref{eg:Gaussian}: for any $k\leq N$,
\[
\frac{dk^2}{N^2} \lesssim \KL(\mu^{1:k} \mmid \pi^{\otimes k}) \lesssim \frac{dk^2}{N^2}\log N \,.
\]
Note that for $\msf C\in\R^{N\times N}$ with $\msf C_{i,i} = N-1$ and $\msf C_{i,j} = -1$ if $i\neq j$,
\[
\mu^{1:N} = \mc N\Bigl(0,\underbrace{\frac{\sigma^{2}}{2}\,\bigl(I_{N} \otimes A +\frac{\lambda}{N-1}\,\msf C\otimes I_{d}\bigr)^{-1}}_{\eqqcolon \Sigma_{1}}\Bigr)\qquad\text{and}\qquad\pi = \mc N\Bigl(0,\underbrace{\frac{\sigma^{2}}{2}\,{(A+\lambda I_{d})}^{-1}}_{\eqqcolon \Sigma_{2}}\Bigr)\,.
\]
The $k$-particle marginal $\mu^{1:k}$ is a Gaussian with zero mean and covariance being the upper-left $(kN\times kN)$-block matrix of $\Sigma_{1}$, which we denote by $\Sigma_{1,k}$. 
Clearly, $\pi^{\otimes k}$ is also a Gaussian with zero mean and covariance $\Sigma_{2,k} \deq I_{k}\otimes\Sigma_{2}$.
From a well-known formula for the $\KL$ divergence between two Gaussian distributions,
\begin{equation}
\KL(\mu^{1:k}\mmid\pi^{\otimes k})=\frac{1}{2}\,\bigl(-\log\det(\Sigma_{2,k}^{-1}\Sigma_{1,k})-dk+\tr(\Sigma_{2,k}^{-1}\Sigma_{1,k})\bigr)\,.\label{eq:KL-Gaussian}
\end{equation}

Let $1_{p}\in\R^{p}$ be the $p$-dimensional vector with all entries $1$. 
From $\msf C=NI_{N}-1_{N}1_{N}^{\T}$,
\begin{align*}
\frac{2}{\sigma^{2}}\,\Sigma_{1}
&=\Bigl(I_{N}\otimes\bigl(\underbrace{A+\frac{\lambda N}{N-1}\,I_{d}}_{\eqqcolon A_{\lambda}}\bigr)-\frac{\lambda}{N-1}\,(1_{N}1_{N}^{\T})\otimes I_{d}\Bigr)^{-1} \\
&\underset{\text{(i)}}{=}\Bigl(I_{N}\otimes A_{\lambda}-\frac{\lambda}{N-1}\,(1_{N}\otimes I_{d})(1_{N}^{\T}\otimes I_{d})\Bigr)^{-1}\\
 & \underset{\text{(ii)}}{=}\Par{I_{N}\otimes A_{\lambda}}^{-1} \\
 &\qquad{} -{(I_{N}\otimes A_{\lambda})}^{-1}(1_{N}\otimes I_{d}) \\
 &\qquad\qquad{}\times \bigl(I_{d}+(1_{N}^{\T}\otimes I_{d})\Par{I_{N}\otimes A_{\lambda}}^{-1}(1_{N}\otimes I_{d})\bigr)^{-1}(1_{N}^{\T}\otimes I_{d})\Par{I_{N}\otimes A_{\lambda}}^{-1}\\
 & \underset{\text{(iii)}}{=}I_{N}\otimes A_{\lambda}^{-1}- (1_N \otimes A_\lambda^{-1})\bigl(I_d + (1_N^\T 1_N) \otimes A_\lambda^{-1}\bigr)^{-1} (1_N^\T \otimes A_\lambda^{-1}) \\
 &= I_{N}\otimes A_{\lambda}^{-1}- (1_N \otimes A_\lambda^{-1}) {(I_d + NA_\lambda^{-1})}^{-1} (1_N^\T \otimes A_\lambda^{-1}) \\
 &= I_{N}\otimes A_{\lambda}^{-1}-(1_{N}1_{N}^{\T})\otimes {(A_{\lambda}^{2}+NA_{\lambda})}^{-1}\,,
\end{align*}
where in (i) we used $(A\otimes B)(C\otimes D)=(AC)\otimes(BD)$,
(ii) follows from the Woodbury matrix identity, and (iii) used $(A\otimes B)^{-1}=A^{-1}\otimes B^{-1}$.
Hence it follows that
\[
\frac{2}{\sigma^{2}}\,\Sigma_{1,k}=I_{k}\otimes A_{\lambda}^{-1}-(1_{k}1_{k}^{\T})\otimes {(A_{\lambda}^{2}+NA_{\lambda})}^{-1}\,.
\]

By the spectral decomposition of $A$, we can write $A=UDU^{\T}$ for a diagonal $D \in \R^{d\times d}$ and an orthogonal matrix $U \in \R^{d\times d}$ such that $\{\sigma_i \deq D_{i,i}\}_{i\in [d]}$ correspond to the eigenvalues of $A$. 
Since $\log\det(\cdot)$ and $\tr(\cdot)$ in (\ref{eq:KL-Gaussian}) are orthogonally invariant, let us look at the orthogonal conjugate of $\Sigma_{2,k}^{-1}\Sigma_{1,k}$ by $I_{k}\otimes U^{\T}\in\R^{dk\times dk}$. Using $(A\otimes B)^{\T}=A^{\T}\otimes B^{\T}$
and denoting $D_{\lambda} \deq D+\frac{\lambda N}{N-1}\,I_{d}$,
\begin{align*}
&(I_{k}\otimes U^{\T})\Sigma_{2,k}^{-1}\Sigma_{1,k}(I_{k}\otimes U)\\
&\qquad = (I_{k}\otimes U^{\T})\bigl(I_{k}\otimes(A+\lambda I_{d})\bigr)\bigl(I_{k}\otimes A_{\lambda}^{-1}-(1_{k}1_{k}^{\T})\otimes {(A_{\lambda}^{2}+NA_{\lambda})}^{-1}\bigr)(I_{k}\otimes U)\\
&\qquad = \bigl(I_{k}\otimes(D+\lambda I_{d})\bigr)\bigl(I_{k}\otimes D_{\lambda}^{-1}-(1_{k}1_{k}^{\T})\otimes {(D_{\lambda}^{2}+ND_{\lambda})}^{-1}\bigr)\\
&\qquad = I_{k}\otimes\Par{(D+\lambda I_{d})D_{\lambda}^{-1}}-\underbrace{(1_{k}1_{k}^{\T})}_{\eqqcolon J_{k}}\otimes\underbrace{\bigl((D+\lambda I_{d}) {(D_{\lambda}^{2}+ND_{\lambda})}^{-1}\bigr)}_{\eqqcolon S_\lambda}\\
&\qquad = I_{dk}-\Bigl(\underbrace{\frac{\lambda}{N-1}\,I_{k}\otimes D_{\lambda}^{-1}+J_{k}\otimes S_{\lambda}}_{\eqqcolon \msf M}\Bigr)\,.
\end{align*}
For $\sigma_d  \deq  \min_{i\in [d]} \sigma_i$, $\alpha \deq \sigma_d + \lambda$, and $\varepsilon \deq  \lambda/(N-1)$, 
we have $D_\lambda^{-1} \precsim \frac{1}{\alpha}\,I_d$ and $S_\lambda \precsim \frac{1}{\alpha+N}\,I_d$ due to
\[
    ((D_{\lambda})^{-1})_{i,i} \leq \frac{1}{\alpha + \varepsilon}
    \qquad\text{and}\qquad
    (S_{\lambda})_{i,i} \leq \frac{\alpha}{(\alpha + \varepsilon)\,(\alpha + \varepsilon + N)}\,.
\]
Since the eigenvalues of $A\otimes B$ consist of all possible combinations arising from the product of eigenvalues, one from $A$ and one from $B$, the largest eigenvalue $\eta_1$ of $\msf M$ is less than $1$: 
\[
\eta_1
\leq\frac{\lambda}{N-1}\,\norm{D_\lambda^{-1}}+k\,\norm{S_\lambda}
\leq \frac{\varepsilon}{\alpha + \varepsilon}+\frac{\alpha N}{(\alpha + \varepsilon)\,(\alpha + \varepsilon + N)}
= \frac{\varepsilon + N}{\alpha + \varepsilon + N}\,.
\]
Denoting the eigenvalues of $\msf M$ by $\eta_{i}$, it follows from \eqref{eq:KL-Gaussian} that
\begin{align*}
    2\KL(\mu^{1:k}\mmid\pi^{\otimes k})
    &=- \bigl(\log\det(I_{dk}-\msf M)+dk-\tr(I_{dk}-\msf M)\bigr)
    =-\sum_{i=1}^{dk} \bigl(\log(1-\eta_{i})+\eta_{i}\bigr) \\
    &= \sum_{i=1}^{dk} \sum_{n\geq 2}\frac{\eta_i^n}{n}\,.
\end{align*}
Then, we have a trivial lower bound of $\frac{1}{2}\sum_{i=1}^{dk} \eta_i^2$, and as for the upper bound,
\[
    \sum_{i=1}^{dk} \sum_{n\geq 2} \frac{\eta_i^n}{n}
    \leq \sum_{i=1}^{dk}\Bigl(\eta_{i}^{2}+\sum_{n\geq 1}\frac{\eta_i^{n+2}}{n}\Bigr)
    = \sum_{i=1}^{dk} \eta_{i}^{2} \log\bigl(\frac{e}{1-\eta_i}\bigr) 
    \lesssim \bigl(1 \vee \log\frac{N}{\alpha}\bigr) \tr(\msf M^{2})\,,
\]
where the last inequality follows from $(1-\eta_i)^{-1}\leq (1-\eta_1)^{-1}$.

Using $\tr(A\otimes B)=\tr(A)\cdot\tr(B)$, and $D_\lambda^{-1} \precsim \frac{1}{\alpha}\,I_d$ and $S_\lambda \precsim \frac{1}{\alpha+N}\,I_d$, we have 
\begin{align*}
\tr(\msf M^{2})
&\lesssim \frac{\lambda^2}{{(N-1)}^{2}}\tr(I_{k})\tr(D_{\lambda}^{-2})+\tr(J_{k}^2)\tr(S_{\lambda}^{2}) \\
 & \lesssim \frac{\lambda^2}{\alpha^2}\,\frac{dk}{N^2}+\frac{dk^{2}}{{(\alpha+N)}^{2}}
 \lesssim \frac{dk^{2}}{N^{2}}\,.
\end{align*}
As for the lower bound, since $(S_\lambda)_{i,i} \sim \frac{1}N$ for large $N$, we have
\[
\tr(\msf M^2)
\gtrsim \frac{dk^2}{N^2}\,,
\]
which completes the proof.

\section{Additional Technical Lemmas}

In our proofs, we used the following general lemmas on exchangeability.

\begin{lemma}\label{lem:wass_exchangeable}
    Let $\mu^{1:N}$, $\nu^{1:N}$ be two exchangeable measures over $\R^{d\times N}$.
    For any $k\le N$,
    \begin{align*}
        \mc W_2^2(\mu^{1:k}, \nu^{1:k})
        &\le \frac{k}{N}\,\mc W_2^2(\mu^{1:N}, \nu^{1:N})\,.
    \end{align*}
\end{lemma}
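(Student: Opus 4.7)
The plan is a straightforward symmetrization argument. First, I would take an optimal coupling $\Gamma$ of $\mu^{1:N}$ and $\nu^{1:N}$, so that
\[
    \mc W_2^2(\mu^{1:N}, \nu^{1:N}) = \E_{(X^{1:N}, Y^{1:N}) \sim \Gamma} \sum_{i=1}^N \norm{X^i - Y^i}^2\,.
\]
This coupling need not be exchangeable, but the cost functional is permutation-symmetric, so it can be symmetrized without increasing cost.

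Next I would define the symmetrized coupling $\tilde \Gamma$ by averaging the pushforwards of $\Gamma$ under the diagonal action of the symmetric group $S_N$: for any measurable $A \subset \R^{d \times N} \times \R^{d \times N}$,
\[
    \tilde\Gamma(A) \deq \frac{1}{N!} \sum_{\sigma \in S_N} \Gamma\bigl(\{(x^{1:N}, y^{1:N}) : (x^{\sigma(1):\sigma(N)}, y^{\sigma(1):\sigma(N)}) \in A\}\bigr)\,.
\]
Because $\mu^{1:N}$ and $\nu^{1:N}$ are exchangeable, each summand has marginals $\mu^{1:N}$ and $\nu^{1:N}$, so $\tilde \Gamma \in \Gamma(\mu^{1:N}, \nu^{1:N})$. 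Since the cost $\sum_{i=1}^N \norm{x^i - y^i}^2$ is invariant under the diagonal action, the cost of $\tilde \Gamma$ equals that of $\Gamma$, hence equals $\mc W_2^2(\mu^{1:N}, \nu^{1:N})$.

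Under $\tilde \Gamma$, the pairs $(X^i, Y^i)$ are exchangeable as $i$ varies over $[N]$, so all the quantities $\E_{\tilde \Gamma}[\norm{X^i - Y^i}^2]$ are equal; call this common value $c$. Summing gives $Nc = \mc W_2^2(\mu^{1:N}, \nu^{1:N})$. Now the marginal $\tilde\Gamma^{1:k}$ of $\tilde\Gamma$ on the first $k$ coordinates of each factor is a coupling of $\mu^{1:k}$ and $\nu^{1:k}$ (again using exchangeability of $\mu^{1:N}, \nu^{1:N}$). Therefore,
\[
    \mc W_2^2(\mu^{1:k}, \nu^{1:k}) \le \E_{\tilde \Gamma} \sum_{i=1}^k \norm{X^i - Y^i}^2 = kc = \frac{k}{N}\, \mc W_2^2(\mu^{1:N}, \nu^{1:N})\,,
\]
which is the desired inequality. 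There is no real obstacle beyond checking that symmetrization preserves the marginals and the cost; the key observation is simply that exchangeability lets us spread the total transport cost evenly across the $N$ coordinates.
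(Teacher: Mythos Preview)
Your proof is correct and takes essentially the same symmetrization approach as the paper. The only cosmetic difference is that the paper averages over uniformly random size-$k$ subsets of $[N]$ (observing that $(X^S,Y^S)$ is a coupling of $\mu^{1:k},\nu^{1:k}$ for each such $S$) and then counts how many subsets contain a given index, whereas you first symmetrize the full coupling over $S_N$ and then project onto the first $k$ coordinates; both routes arrive at the same averaging computation.
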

\begin{proof}
    Let $(X^{1:N}, Y^{1:N})$ be optimally coupled for $\mu^{1:N}$ and $\nu^{1:N}$.
    For each subset $S\subseteq [N]$ of size $k$, it induces a coupling $(X^S, Y^S)$ of $\mu^{1:k}$ and $\nu^{1:k}$ (by exchangeability).
    In particular, the law of $(X^{\msf S}, Y^{\msf S})$, where $\msf S$ is an independent and uniformly random subset of size $k$, is also a coupling of $\mu^{1:k}$ and $\nu^{1:k}$. Hence,
    \begin{align*}
        \mc W_2^2(\mu^{1:k}, \nu^{1:k})
        &\le \E[\norm{X^{\msf S} - Y^{\msf S}}^2]
        = \frac{1}{\binom{N}{k}} \sum_{\abs S = k} \E[\norm{X^S - Y^S}^2] \\
        &= \frac{1}{\binom{N}{k}} \sum_{i=1}^N \sum_{\abs S = k \, : \, i \in S} \E[\norm{X^i - Y^i}^2]
        = \frac{\binom{N-1}{k-1}}{\binom{N}{k}} \sum_{i=1}^N \E[\norm{X^i - Y^i}^2] \\
        &= \frac{k}{N}\,\mc W_2^2(\mu^{1:N}, \nu^{1:N})\,,
    \end{align*}
    which completes the proof.
\end{proof}

\begin{lemma}[{Information Inequality~\cite{csiszar1984sanov}}]\label{lem:information_ineq}
    If $\mc X^1, \ldots, \mc X^N$ are Polish spaces and $\mu^{1:N}$, $\nu^{1:N}$ are probability measures on $\mc X^1 \times \cdots \times \mc X^N$, where $\nu^{1:N} = \nu^1 \otimes \cdots \otimes \nu^N$ is a product measure, then for the marginals $\mu^i$ of $\mu$, it holds that
    \[
        \sum_{i=1}^N \KL(\mu^i \mmid \nu^i) \leq \KL(\mu^{1:N} \mmid \nu^{1:N})\,.
    \]
    In particular when $\mu^{1:N}$, $\nu^{1:N}$ are both exchangeable, this states that $\KL(\mu^1 \mmid \nu^1) \leq \frac{1}{N} \KL(\mu^{1:N} \mmid \nu^{1:N})$.
\end{lemma}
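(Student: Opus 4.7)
The plan is to prove the information inequality by means of a direct algebraic identity, sometimes called the "Pythagorean theorem" for relative entropy: for any joint law $\mu^{1:N}$ and any product reference $\nu^{1:N} = \nu^1 \otimes \cdots \otimes \nu^N$,
\[
    \KL(\mu^{1:N} \mmid \nu^{1:N}) \;=\; \KL\bigl(\mu^{1:N} \bigm\Vert\textstyle\bigotimes_{i=1}^N \mu^i\bigr) \;+\; \sum_{i=1}^N \KL(\mu^i \mmid \nu^i)\,.
\]
Since the first term on the right is nonnegative (being a KL divergence), the stated inequality follows immediately.

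To establish the identity, I would write the log-density ratio as the telescoping sum
\[
    \log \frac{\mu^{1:N}(x^{1:N})}{\nu^{1:N}(x^{1:N})} \;=\; \log \frac{\mu^{1:N}(x^{1:N})}{\prod_{i=1}^N \mu^i(x^i)} \;+\; \sum_{i=1}^N \log \frac{\mu^i(x^i)}{\nu^i(x^i)}\,,
\]
which uses only the fact that $\nu^{1:N}$ factorizes. Integrating against $\mu^{1:N}$ and recognizing that $\E_{\mu^{1:N}}[\log(\mu^i/\nu^i)(x^i)] = \KL(\mu^i \mmid \nu^i)$ since the integrand depends only on the $i$-th coordinate yields the claimed decomposition. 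One mild technicality is handling the case $\KL(\mu^{1:N} \mmid \nu^{1:N}) = \infty$ or $\mu^{1:N} \not\ll \nu^{1:N}$: in that case either some $\mu^i \not\ll \nu^i$ (in which case both sides are infinite in the appropriate sense) or $\mu^{1:N} \ll \nu^{1:N}$ and all conditional densities and marginal densities are well-defined, so the computation above is rigorous.

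For the exchangeable case, I would simply observe that the marginals $\mu^i$ and $\nu^i$ are independent of $i$, so $\sum_{i=1}^N \KL(\mu^i \mmid \nu^i) = N\KL(\mu^1 \mmid \nu^1)$, and dividing through by $N$ yields the displayed bound.

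I do not expect any substantive obstacle here; the argument is a two-line calculation once the telescoping identity is written down, and the only thing to be careful about is the standard convention for KL when absolute continuity fails.
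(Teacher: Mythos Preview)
Your argument is correct. The Pythagorean identity you write down,
\[
    \KL(\mu^{1:N} \mmid \nu^{1:N}) \;=\; \KL\bigl(\mu^{1:N} \bigm\Vert {\textstyle\bigotimes_{i=1}^N} \mu^i\bigr) \;+\; \sum_{i=1}^N \KL(\mu^i \mmid \nu^i)\,,
\]
is valid whenever $\nu^{1:N}$ is a product, and the inequality follows by nonnegativity of the first term (the total correlation of $\mu^{1:N}$). The exchangeable consequence is then immediate.

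The paper does not spell out a proof but only remarks that the lemma follows from the chain rule and convexity of $\KL$. That route is slightly different: one writes
\[
    \KL(\mu^{1:N} \mmid \nu^{1:N}) = \sum_{i=1}^N \E_{\mu^{1:i-1}}\KL\bigl(\mu^{i\mid 1:i-1}(\cdot \mid x^{1:i-1}) \bigm\Vert \nu^i\bigr)
\]
via the chain rule (using that $\nu^{1:N}$ is a product), and then applies convexity of $\KL$ in its first argument to lower-bound each summand by $\KL(\mu^i \mmid \nu^i)$. Both proofs are equally short; your decomposition has the mild advantage of identifying the slack in the inequality exactly as the total correlation $\KL(\mu^{1:N} \mmid \bigotimes_i \mu^i)$, whereas the chain-rule argument only shows nonnegativity of the slack without naming it.
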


Note that Lemma~\ref{lem:information_ineq} follows from the chain rule and convexity of the $\KL$ divergence.

\section{Sampling Guarantees}\label{app:sampling}

Here, we show how to obtain the claimed rates in \S\ref{scn:main_results}.
We begin with some preliminary facts.

\paragraph*{$\KL$ divergence guarantees.}
To obtain our guarantees in $\KL$ divergence, we use the following lemma.

\begin{lemma}[{\cite[Proof of Theorem 6]{zhang2023improved}}]
    Let $\hat \mu$, $\mu$, and $\pi$ be three probability measures, and assume that $\mu$ satisfies~\eqref{eq:lsi} with constant $C_{\msf{LSI}}(\mu)$. Then,
    \begin{align*}
        \KL(\hat\mu \mmid \pi)
        &\le 2\,\chi^2(\hat\mu\mmid \mu) + \KL(\mu \mmid \pi) + \frac{C_{\msf{LSI}}(\mu)}{4}\FI(\mu \mmid \pi)\,.
    \end{align*}
\end{lemma}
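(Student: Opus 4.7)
The plan is to prove the inequality by a two-step decomposition: first split $\KL(\hat\mu\mmid\pi)$ using the identity $\log(\hat\mu/\pi)=\log(\hat\mu/\mu)+\log(\mu/\pi)$, then control the resulting cross term via Cauchy{--}Schwarz and the Poincar\'e inequality (which is implied by the LSI).

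First, write $g \deq \D\hat\mu/\D\mu$ and $\phi \deq \log(\mu/\pi)$. Then
\[
    \KL(\hat\mu\mmid\pi)
    = \int g\,\log g \dmu + \int g\phi \dmu
    = \KL(\hat\mu\mmid\mu) + \KL(\mu\mmid\pi) + \int (g-1)\,\phi \dmu\,,
\]
where the last equality uses $\int \phi\dmu = \KL(\mu\mmid\pi)$. For the first term, invoke the elementary inequality $\KL(\hat\mu\mmid\mu)\le \chi^2(\hat\mu\mmid\mu)$, which follows from $\log x \le x-1$.

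Next I would handle the cross term $\int(g-1)\phi\dmu$. Since $\int(g-1)\dmu = 0$, this integral is unchanged if $\phi$ is replaced by $\phi - \E_\mu\phi$, so Cauchy{--}Schwarz gives
\[
    \int(g-1)\,\phi\dmu
    = \int(g-1)\,(\phi - \E_\mu\phi) \dmu
    \le \sqrt{\chi^2(\hat\mu\mmid\mu)}\,\sqrt{\Var_\mu[\phi]}\,.
\]
Now, since $\mu$ satisfies~\eqref{eq:lsi} with constant $C_{\msf{LSI}}(\mu)$, it also satisfies a Poincar\'e inequality with the same constant, which applied to $\phi$ yields
\[
    \Var_\mu[\phi] \le C_{\msf{LSI}}(\mu)\,\E_\mu[\norm{\nabla\phi}^2] = C_{\msf{LSI}}(\mu)\,\FI(\mu\mmid\pi)\,.
\]
Finally, apply AM{--}GM in the form $\sqrt{ab} \le a + b/4$ with $a = \chi^2(\hat\mu\mmid\mu)$ and $b = C_{\msf{LSI}}(\mu)\,\FI(\mu\mmid\pi)$ to conclude
\[
    \int(g-1)\,\phi\dmu \le \chi^2(\hat\mu\mmid\mu) + \tfrac{1}{4}\,C_{\msf{LSI}}(\mu)\,\FI(\mu\mmid\pi)\,.
\]
Combining the three bounds gives exactly the claimed inequality, with the factor $2$ in front of $\chi^2$ arising from summing the $\KL \le \chi^2$ bound with the Cauchy{--}Schwarz/AM-GM bound.

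The only subtle point is the passage from LSI to Poincar\'e; this is classical (e.g., \cite[Proposition~5.1.3]{bakry2014analysis}), and I do not expect any genuine obstacle. The rest is algebraic bookkeeping.
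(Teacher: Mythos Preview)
Your argument is correct. The paper does not supply its own proof of this lemma; it simply cites the result from~\cite[Proof of Theorem 6]{zhang2023improved}, so there is no in-paper proof to compare against. Your decomposition $\KL(\hat\mu\mmid\pi)=\KL(\hat\mu\mmid\mu)+\KL(\mu\mmid\pi)+\int(g-1)\phi\,\D\mu$ followed by Cauchy{--}Schwarz, Poincar\'e (from LSI), and the AM{--}GM splitting $\sqrt{ab}\le a+b/4$ is exactly the standard route and goes through without issue.
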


We instantiate the lemma with $\hat\mu^{1:N}$, $\mu^{1:N}$, and $\pi^{\otimes N}$ respectively.
In the setting of Theorem~\ref{thm:weak_prop_chaos}, it is seen that $\KL(\mu^{1:N} \mmid \pi^{\otimes N})$ and $C_{\msf{LSI}}(\mu^{1:N})\FI(\mu^{1:N} \mmid \pi^{\otimes N})$ are of the same order and can be made at most $N\varepsilon^2$ if we take $N \asymp \kappa^4 d/\varepsilon^2$.
Thus, if we have a sampler that achieves $\chi^2(\hat \mu^{1:N} \mmid \mu^{1:N}) \le N\varepsilon^2$, it follows from exchangeability (Lemma~\ref{lem:information_ineq}) that $\KL(\hat\mu^1 \mmid \pi) \lesssim \varepsilon^2$.

\paragraph*{Guarantees using the sharp propagation of chaos bound.}
Here, we impose Assumptions~\ref{as:smoothness},~\ref{as:pi_lsi},~\ref{as:weak_interaction}, and~\ref{as:lsi_N}.
It follows from Theorem~\ref{thm:N_choice_bias} that $N = \widetilde\Theta(\sqrt d/\varepsilon)$ suffices in order to make $\nicefrac{\sqrt\alpha}{\sigma}\,\mc W_2(\mu^{1},\pi) \le \varepsilon$.
For the first term, we use exchangeability (Lemma~\ref{lem:wass_exchangeable}) to argue that
\begin{align*}
    \mc W_2(\hat\mu^{1}, \mu^{1}) \le N^{-1/2}\,\mc W_2(\hat\mu^{1:N},\mu^{1:N})\,,
\end{align*}
and hence we invoke sampling guarantees to ensure that $\nicefrac{\sqrt{\alpha}}{\sigma}\,\mc W_2(\hat\mu^{1:N}, \mu^{1:N}) \le N^{1/2} \varepsilon$ under~\eqref{eq:lsi}.
\begin{itemize}
    \item \textbf{LMC:} We use the guarantee for Langevin Monte Carlo from~\cite{vempala2019rapid}.
    \item \textbf{MALA--PS:} We use the guarantee for the Metropolis-adjusted Langevin algorithm together with the proximal sampler from~\cite{altschuler2023faster}. Note that the iteration complexity is $\widetilde{\mc O}(\kappa d^{1/2} N^{1/2})$, and we substitute in the chosen value for $N$.
    
    \item \textbf{ULMC--PS:} Here, we use underdamped Langevin Monte Carlo to implement the proximal sampler.
    To justify the sampling guarantee, note that since $\log\mu^{1:N}$ is $\beta$-smooth, if we choose step size $h = \frac{1}{2\beta}$ for the proximal sampler, then the RGO is $\beta$-strongly log-concave and $3\beta$-log-smooth.
    According to~\cite[Proof of Theorem 5.3]{altschuler2023faster}, it suffices to implement the RGO in each iteration to accuracy $N^{1/2} \varepsilon/\kappa^{1/2}$ in $\sqrt{\KL}$.
    Then, from~\cite{zhang2023improved}, this can be done via ULMC with complexity $\widetilde{\mc O}(\kappa^{1/2} d^{1/2}/\varepsilon)$.
    Finally, since the number of outer iterations of the proximal sampler is $\widetilde{\mc O}(\kappa)$, we obtain the claim.

    \item \textbf{ULMC${}^+$:} Here, we use either the randomized midpoint discretization~\cite{shen2019randomized} or the shifted ODE discretization~\cite{FosLyoObe21ShiftedODE} of the underdamped Langevin diffusion.
    We also replace the LSI assumptions (Assumptions~\ref{as:pi_lsi} and~\ref{as:lsi_N}) with strong convexity (Assumption~\ref{as:str_cvx_VW}).
\end{itemize}

\paragraph*{Guarantees under strong displacement convexity.}
Here, we impose Assumptions~\ref{as:smoothness} and~\ref{as:str_cvx_VW}. As discussed above, to obtain $\KL$ guarantees, we require log-concave samplers that can achieve $\chi^2(\hat\mu^{1:N} \mmid \mu^{1:N}) \le N\varepsilon^2$.
For $\mc W_2$ guarantees, by Theorem~\ref{thm:weak_prop_chaos} we take $N \asymp \kappa^2 d/\varepsilon^2$ and we require log-concave samplers that can achieve $\nicefrac{\sqrt\alpha}{\sigma}\,\mc W_2(\hat\mu^{1:N}, \mu^{1:N}) \le N^{1/2}\varepsilon$.
\begin{itemize}
    \item \textbf{LMC:} For Langevin Monte Carlo, we use the $\chi^2$ guarantee from~\cite{chewi2021analysis} and the $\mc W_2$ guarantee from~\cite{DurMajMia19LMCCvxOpt}.
    \item \textbf{ULMC:} For underdamped Langevin Monte Carlo, we use the $\chi^2$ guarantee from~\cite{altschuler2023faster}.
    \item \textbf{ULMC${}^+$:} Here, we use the $\mc W_2$ guarantees for either the randomized midpoint discretization~\cite{shen2019randomized} or the shifted ODE discretization~\cite{FosLyoObe21ShiftedODE} of the underdamped Langevin diffusion.
\end{itemize}

\paragraph*{Guarantees in the general McKean{--}Vlasov setting.}
In the setting of Theorem~\ref{thm:entropic_prop_chaos_general_functional}, we take $N\asymp \kappa d/\varepsilon^2$.
We use the same sampling guarantees under~\eqref{eq:lsi} as in the prior settings.

We also note that in order to apply the log-concave sampling guarantees, we must check that $\mu^{1:N}$ is log-smooth.
This follows from Assumption~\ref{as:functional_smoothness}.
Indeed,
\begin{align*}
    \norm{\nabla \log \mu^{1:N}(x^{1:N}) - \nabla \log \mu^{1:N}(y^{1:N})}
    &= \frac{2}{\sigma^2}\sqrt{\sum_{i=1}^N {\norm{\nabla_{\mc W_2} \mc F(\rho_{x^{1:N}}, x^i) - \nabla_{\mc W_2} \mc F(\rho_{y^{1:N}}, y^i)}^2}} \\
    &\le \frac{2\sqrt 2\,\beta}{\sigma^2}\sqrt{\sum_{i=1}^N \bigl(\norm{x^i - y^i}^2 + \mc W_1^2(\rho_{x^{1:N}}, \rho_{y^{1:N}})\bigr)} \\
    &\le \frac{4\beta}{\sigma^2}\,\norm{x^{1:N} - y^{1:N}}\,.
\end{align*}

\end{document}